\newcounter{tmpcounter}
\numberwithin{equation}{section}
\numberwithin{excounter}{section}
\numberwithin{defcounter}{section}
\numberwithin{thmcounter}{section}
\numberwithin{corcounter}{section}
\newtheorem{prop}{Proposition}[section]
\newtheorem{cor}[prop]{Corollary}
\newtheorem{ex}[prop]{Example}
\newtheorem{lem}[prop]{Lemma}
\newtheorem{thm}[prop]{Theorem}
\newtheorem{defin}[prop]{Definition}
\newtheorem{rem}[prop]{Remark}
\newcommand{\Jac} {\mathop{\mathrm{Jac}}}
\newcommand{\Tr} {\mathop{\mathrm{Tr}}}
\newcommand{\disc} {\mathop{\mathrm{disc}}}
\newcommand{\Ree} {\mathop{\mathrm{Re}}}
\newcommand{\degr} {\mathop{\mathrm{deg}}}
\newcommand{\Spec} {\mathop{\mathrm{Spec}}}
\newcommand{\Lie} {\mathop{\mathrm{Lie}}}
\newcommand{\Pic} {\mathop{\mathrm{Pic}}}
\newcommand{\IH}{\mathbb H}
\newcommand{\IP}{\mathbb P}
\newcommand{\IC}{\mathbb C}
\newcommand{\IZ}{\mathbb Z}
\newcommand{\IQ}{\mathbb Q}
\newcommand{\IR}{\mathbb R}
\newcommand{\IQbar}{\overline{\mathbb Q}}
\newcommand{\SSL}[1]{{\rm SL}_{#1}}
\newcommand{\SL}[2]{\SSL{#1}({#2})}
\newcommand{\SP}[2]{{\rm Sp}_{#1}({#2})}
\newcommand{\GL}[2]{{\rm GL}_{#1}({#2})}
\newcommand{\GLp}[2]{{\rm GL}^{+}_{#1}({#2})}
\newcommand{\cO}{\mathcal{O}}
\renewcommand{\O}[1]{\mathcal{O}_{#1}}
\newcommand{\Endo}[1]{{\rm End}({#1})}
\newcommand{\aGal}[1]{{\rm Gal}(\overline{#1}/#1)}
\newcommand{\gal}[1]{{\rm Gal}({#1})}
\newcommand{\atopx}[2]{\genfrac{}{}{0pt}{}{#1}{#2}}
\newcommand{\trans}[1]{{}^t{#1}}
\def\ssm{\smallsetminus}
\newcommand{\reldisc}[1]{\mathfrak{d}_{#1}}
\newcommand{\diff}[1]{\mathscr{D}_{#1}}
\newcommand{\norm}[2]{{\mathrm N}_{#2}({#1})}
\newcommand{\inorm}[1]{{\mathrm N}({#1})}
\newcommand{\mat}[2]{\mathrm {Mat}_{#2}({#1})}
\newcommand{\imagS}{\mathop{\mathrm{Im}}}
\newcommand{\realS}{\mathop{\mathrm{Re}}}
\newcommand{\imag}[1]{\imagS({#1})}
\newcommand{\real}[1]{\realS({#1})}
\newcommand{\diag}[1]{{\mathrm{diag}}({#1})}
\newcommand{\tr}[2]{{\rm Tr}_{#2}({#1})}
\newcommand{\cg}[1]{{Cl}_{#1}}
\newcommand{\sfd}{\mathcal{F}} 
\newcommand{\hfd}{\mathcal{F}} 
\newcommand{\res}[1]{{\rm Res}_{#1}}
\newcommand{\jac}[1]{{\rm Jac}({#1})}
\newcommand{\aut}[1]{{\rm Aut}({#1})}
\newcommand{\art}{{\rm art}}
\newcommand{\magicsection}{\eta}
\newcommand{\unr}[1]{{#1}^{unr}}
\renewcommand{\subset}{\subseteq}
\begin{document}

\renewcommand{\theenumi}{(\roman{enumi})}

\title[Bad reduction and CM jacobians]{Bad reduction of genus $2$ curves with CM jacobian varieties}
 

\author[Philipp {\sc Habegger}]{{\sc Philipp} Habegger}
\address{Philipp {\sc Habegger}\\
Departement Mathematik und Informatik\\
Spiegelgasse 1\\
4051 Basel \\
Switzerland
}
\email{philipp.habegger@unibas.ch}

\author[Fabien {\sc Pazuki}]{{\sc Fabien} Pazuki}
\address{Fabien {\sc Pazuki}\\
Department of Mathematical Sciences\\
University of Copenhagen\\
Universitetsparken 5\\
2100 Copenhagen, Denmark
and\\
IMB, Univ. Bordeaux\\
351, cours de la Lib\'eration\\
33405 Talence, France\\}
\email{fabien.pazuki@math.u-bordeaux.fr}
\urladdr{http://www.math.u-bordeaux.fr/~fpazuki}

\maketitle

%

%



{\small{\textsc{Abstract}: 
We show that a genus $2$ curve over a number field whose jacobian
has complex multiplication will usually have stable bad reduction at
some prime.
We prove this by computing the Faltings height of the jacobian in two
different ways.
First, we use a formula by Colmez and Obus specific to the CM case and valid
when the CM field is an abelian extension of the rationals.
This formula links the height and the logarithmic derivatives of an
$L$-function. 
The second formula involves a decomposition of the height into local
terms based on a hyperelliptic model.
We use results of Igusa, Liu, and Saito to show that the
  contribution at the finite places in our decomposition measures the stable bad reduction
of the curve and
subconvexity bounds by Michel and Venkatesh together with an 
  equidistribution result of Zhang to handle the infinite places.}}

\tableofcontents

{\flushleft
\textbf{Keywords:}  Curves, hyperelliptic jacobians, complex multiplication\\
\textbf{Mathematics Subject Classification.} Primary: 14K22, secondary: 11G18,
11G30, 11G50, 14H40, 14G40, 14H40}


\section{Introduction}





By a  curve we mean a smooth, geometrically connected, projective curve $C$ defined
over a field $k$. 
Its
 jacobian variety $\jac{C}$
 is a principally polarised abelian variety defined over $k$.
For any abelian variety $A$ defined over $k$ 
we  write $\Endo{A}$ for the ring of geometric
endomorphisms of $A$, \textit{i.e.} the ring of endomorphisms of the base
change of  $A$ to a given algebraic closure of $k$. 
For brevity we say that $A$ has CM if its base change to an algebraic
closure of $k$ has complex multiplication and if $k$ has
characteristic $0$.
We also say that $C$ has CM if $\jac{C}$ does. 
A curve defined over $\IQbar$ is said to have good reduction everywhere if it has
potentially good reduction at all finite places of
 a number field over which it is defined. 

By the work of Serre and Tate \cite{SerTat},  an abelian variety defined over a
number field with CM has potentially good reduction at all finite places. 
If a curve of positive genus which is defined over a
number field has good reduction at a given finite place, then so does
its jacobian variety. However, the converse statement is
false already in the genus $2$ case,  \textit{cf.} entry 
[$I_0\text{-}I_0\text{-}m$] in Namikawa and Ueno's
classification table 
\cite{NamikawaUeno} in equicharacteristic $0$. The main result of our
paper, which we discuss in greater detail below, states that this phenomenon prevails
for certain families of CM curves of genus $2$.

\begin{thm}
\label{thm:finiteness}
Let $F$ be a  real quadratic number field.
Up-to isomorphism there are only finitely many
curves $C$ of genus $2$ defined over $\IQbar$
with good reduction everywhere 
and such that $\Endo{\jac{C}}$ is the maximal order of a quartic,
cyclic, totally imaginary number field containing $F$. 
\end{thm}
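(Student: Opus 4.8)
The plan is to compute the stable Faltings height $h_F(\jac{C})$ of the jacobian in two independent ways and to compare the outcomes. Write $K$ for the quartic, cyclic, totally imaginary field with $\O K = \Endo{\jac{C}}$, and let $k$ be a number field over which $C$ is defined; after a finite base change we may assume that $k/\IQ$ is Galois and that $C$ has good reduction at every finite place of $k$. On the one hand, since $K$ is cyclic it is abelian over $\IQ$, so the formula of Colmez, in the unconditional form established by Obus, applies to $\jac{C}$ for whatever its CM type is and expresses $h_F(\jac{C})$ in terms of the logarithmic derivatives $L'(0,\chi)/L(0,\chi)$ and conductors $f_\chi$ of the Artin $L$-functions of the characters $\chi$ of $\Gal(K/\IQ)$. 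By the conductor--discriminant formula $|\disc(K)| = |\disc(F)|\, f_\psi^2$, where $\psi,\bar\psi$ are the two characters of order $4$, so the conductor terms supply a main term $\asymp \log\disc(K)$; feeding in the standard bounds for $L'/L$ at $s=0$ (equivalently, via the functional equation, at $s=1$), which are of strictly smaller order, one reads off
\[
 h_F(\jac{C}) \;\geq\; c\,\log\disc(K) - c'
\]
for constants $c>0$ and $c'$ depending only on $F$. In particular $h_F(\jac{C})\to\infty$ as $\disc(K)\to\infty$, and this half of the argument uses nothing about the reduction of $C$.

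On the other hand, fixing a hyperelliptic equation for $C$ over $k$ together with a minimal model at each finite place gives a decomposition
\[
 h_F(\jac{C}) \;=\; \frac{1}{[k:\IQ]}\sum_{v\nmid\infty}\lambda_v(C)\log q_v \;+\; \frac{1}{[k:\IQ]}\sum_{v\mid\infty}\mu_v(C) \;+\; \kappa,
\]
with $\kappa$ an absolute constant, $q_v$ the residue cardinality, $\lambda_v(C)$ built from the valuation of the minimal discriminant and the Igusa invariants, and $\mu_v(C)$ equal, up to the constant, to minus the logarithm of a Petersson-normalised Siegel modular form (the Igusa cusp form of weight $10$) evaluated at the point of $\mathcal{A}_2$ given by $\jac{C}$ at $v$. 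Invoking Igusa's theory of invariants together with the results of Liu and Saito that tie the stable model, the minimal discriminant and the conductor of a genus $2$ curve together, one shows that each $\lambda_v(C)$ is nonnegative and vanishes exactly when $C$ has good reduction at $v$. Hence under our hypothesis the whole finite contribution is zero, and likewise for every Galois conjugate $C^\sigma$, which is again defined over $k$ and has good reduction at every place of $k$. The archimedean weight $\mu_v$ is a function on $\mathcal{A}_2$ that is bounded below and integrable for the invariant probability measure but blows up logarithmically towards the boundary and along the locus of products of elliptic curves, where the cusp form vanishes.

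Because the stable Faltings height is invariant under $\IQbar$-isomorphism and under the Galois action, $h_F(\jac{C})$ equals the average of $h_F(\jac{C^\sigma})$ over the Galois orbit of $\jac{C}$, and by the previous paragraph this average equals the average of the archimedean terms $\mu_v(C^\sigma)$. The surfaces $\jac{C^\sigma}$ form the Galois orbit of a CM point on the Hilbert modular surface of $F$ --- the locus of principally polarised abelian surfaces with real multiplication by $\O F$ --- and, by Zhang's equidistribution theorem, this orbit equidistributes on that surface as $\disc(K)\to\infty$. Since $\mu_v$ is not bounded, equidistribution against bounded continuous functions does not suffice; this is where the subconvexity estimates of Michel and Venkatesh enter, providing quantitative equidistribution with a power-saving error. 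Together with a bound on the number of CM points in a shrinking neighbourhood of the singular locus and the mere logarithmic integrability of $\mu_v$, a truncation argument then shows that the average of the $\mu_v(C^\sigma)$, and hence $h_F(\jac{C})$ itself, is bounded above by a constant depending only on $F$. Comparing with the lower bound above forces $\disc(K)$ to be bounded, so $K$ lies in a finite set; for each such $K$ the theory of complex multiplication gives only finitely many principally polarised abelian surfaces over $\IQbar$ with endomorphism ring $\O K$, each geometrically simple because the only proper subfield $F$ of $K$ is totally real (so $K$ is a primitive CM field), hence each the jacobian of a genus $2$ curve that is unique up to isomorphism by Torelli's theorem. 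This yields the asserted finiteness.

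The hardest step will be the archimedean analysis in the third paragraph: making Zhang's equidistribution effective by way of Michel--Venkatesh and then integrating the unbounded, singular test function coming from the Igusa cusp form, which forces one to rule out an anomalous accumulation of CM points near the boundary of the Hilbert modular surface and near the product locus. A further, more technical, point is the identification in the second paragraph of the finite local terms $\lambda_v(C)$ as a genuine measure of the stable bad reduction of $C$, which is precisely where the work of Igusa, Liu and Saito is used; and on the analytic side of the first paragraph one must control the logarithmic derivatives $L'/L$, including the possible effect of a Siegel zero, which in fact only reinforces the lower bound.
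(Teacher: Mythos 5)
Your overall strategy is the paper's: a Colmez--Obus lower bound linear in $\log\Delta_K$, a local decomposition of $h(\jac{C})$ in which the finite terms vanish under good reduction everywhere, and an archimedean average over the Galois orbit controlled by equidistribution and Michel--Venkatesh subconvexity. But there is a genuine gap at the archimedean step, exactly where the paper identifies the crucial difficulty. Quantitative equidistribution (plus integrability of the singular test function) bounds the \emph{number} of conjugates landing in a shrinking neighbourhood of the vanishing locus of $\chi_{10}$; it says nothing about how close a \emph{single} conjugate may come to that locus. Since the local term at such a point is $-\log|\chi_{10}(Z_\sigma)|\asymp -2\log|z_{\sigma,12}|$ plus controlled terms, one rogue conjugate with $|z_{\sigma,12}|$ super-exponentially small in $\Delta_K$ would swamp the average, and your truncation argument does not exclude this. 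The paper closes this hole with an arithmetic input you never invoke: CM period matrices are algebraic of bounded degree, their height in Siegel-reduced form is bounded polynomially in $\Delta_K$ (Pila--Tsimerman), and Liouville's inequality then gives $-\log|z_{\sigma,12}|\ll\log\Delta_K$. Even with this, the method yields only $h(\jac{C})\le \epsilon\log\Delta_K + c(\epsilon,F)$, not the uniform constant bound you assert; the authors explicitly state they could not obtain a uniform bound, and the finiteness theorem instead follows because the $\epsilon\log\Delta_K$ loss is beaten by the linear lower bound once $F$ is fixed. As written, your third paragraph therefore both claims more than the method gives and omits the ingredient needed to make even the weaker bound work. (A related omission: you need a lower bound of the form $\#(\text{orbit})\gg\Delta_K^{1/2-\epsilon}$, via the reflex norm image, Brauer--Siegel and control of $\cg{K}[2]$, for the subconvexity-based averages to have small relative error.)

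A secondary inaccuracy is in your first paragraph: unconditionally the available bounds on $L'/L$ at $s=1$ for $\zeta_K/\zeta_F$ are \emph{not} of strictly smaller order than $\log\Delta_K$ --- they are of the same order, and the classical lower bound for the Euler--Kronecker constant ($\gamma_K\gtrsim-\tfrac12\log\Delta_K$) exactly cancels the conductor term $\tfrac14\log(\Delta_K/\Delta_F)$ coming from the functional equation. One needs a refined bound such as Badzyan's $\gamma_K\ge-\tfrac12\bigl(1-\tfrac{\sqrt5}{5}\bigr)\log\Delta_K$ to retain a positive coefficient (the paper gets $\tfrac{\sqrt5}{20}\log\Delta_K$). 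The conclusion of your first paragraph is correct and citable, but the justification as stated would give coefficient zero. Your remark that a Siegel-type zero only helps the lower bound is fine, and the endgame (Hermite--Minkowski, finiteness of CM abelian surfaces with a given maximal order, simplicity from primitivity of the cyclic CM type, Torelli) matches the paper.
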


This finiteness result is of a familiar type for objects in arithmetic
geometry.
        A number field has only finitely many unramified extensions of given 
degree due to the  Theorem of Hermite-Minkowski.  The Shafarevich Conjecture,
 proved by Faltings \cite{Faltings:ES}, ensures that again there are
 only finitely many curves defined over a fixed number field, of fixed
 positive genus, with good reduction outside a fixed finite set of
 places. 
Fontaine  \cite[page 517]{Font85}  proved that there is no non-zero
abelian variety of any dimension with good reduction at all finite places if one fixes the field
of definition  to be either $\mathbb{Q}$, $\mathbb{Q}(i)$,
$\mathbb{Q}(i\sqrt{3})$ or $\mathbb{Q}(\sqrt{5})$. 
In particular, there exists no curve over $\IQ$ of positive genus that
has good reduction at all primes. 
 Schoof obtained    finiteness results along these lines for
 certain additional cyclotomic fields  \cite{Schoof03}.

Let us stress here that there are
 infinitely many curves of genus $2$ defined over $\IQbar$ with good reduction everywhere. One can deduce this fact from 
Moret-Bailly's Exemple 0.9 \cite{MB:Skolem}.

Our result does not seem to be a direct consequence of the theorems
mentioned above.
Instead of working over a fixed number field our finiteness result
concerns curves over the algebraically closed field $\IQbar$. 
 Indeed, it is not possible to uniformly bound the degree over $\IQ$
 of a curve of genus $2$  whose
jacobian variety has complex multiplication.


\begin{ex}
Let us exhibit an infinite family of
genus $2$ curves
with CM such that the endomorphism ring is the ring of algebraic integers
in a cyclic extension of $\IQ$ that contains
 $\IQ(\sqrt{5})$.

Suppose  $p\equiv 1 \mod 12$ is a prime, then 
\begin{equation*}
f = x^4 + 10px^2+5p^2
\end{equation*}
has roots
\begin{equation}
\label{eq:froots}
\pm \sqrt{ -p(5 \pm 2 \sqrt{5})}.
\end{equation}
So the splitting field $K=K_p$ of $f$ over $\IQ$ is a CM-field with
maximal totally real subfield 
$\IQ(\sqrt{5})$. The product of  two roots  lies in
$\IQ(\sqrt 5)$, so $K/\IQ$ is a Galois extension. 
But such a product does not lie in $\IQ$, so
the Galois group of $K/\IQ$ is not isomorphic to $(\IZ/2\IZ)^2$. 
 This
means that $K/\IQ$ is a cyclic extension.

 By (\ref{eq:froots})  $K$ is ramified above 
 $p$, so we obtain infinitely many fields $K$ as there are
infinitely many admissible $p$.

There exists a principally polarised abelian surface whose
endomorphism ring is the ring of algebraic integers in $K$,
see for example the paragraph after the proof of van Wamelen's Theorem 4 \cite{vanWamelen}.
As in our situation this abelian
variety is necessarily simple, \textit{cf.} Lemma \ref{lem:reflexlb1} below, the
endomorphism ring must be equal to the ring of integers in $K$.
A principally polarised abelian surface that is not a product of
elliptic curves with the product polarisation is the jacobian of a
 curve of genus $2$
by Corollary 11.8.2 \cite{CAV}.
Therefore, there is a curve  $C=C_p$
defined over a number field such that $\jac{C}$ has complex
multiplication by the ring of algebraic integers in $K$. According to
Theorem \ref{thm:finiteness},
 the curve $C$ has potentially good reduction everywhere
for at most finitely many $p$.

We set $m=(p-1)/12\in\IZ$ and observe that
\begin{equation*}
 2^{-4} f(2x+1) = x^4 + 2x^3 + (30m + 4)x^2 + (30m + 3)x + 45m^2 + 15m + 1
\end{equation*}
 is irreducible modulo $2$ and
modulo $3$. This implies that $K/\IQ$ is unramified above these
primes
and even that they are inert in $K$.
We may apply Goren's Theorem
1 \cite{Goren97} to see  that the semi-stable reduction of $\jac{C}$ at all
all places above $2$ and $3$ is isogenous but not isomorphic to a product of
supersingular elliptic curves. By the paragraph before Proposition
2 \cite{Liu:stables} the  curve $C$ has potentially  good reduction at places
above $2$ and $3$. So bad reduction is not a consequence of  the obstruction
described
by Ibukiyama, Katsura, and Oort's Theorem 3.3(III) \cite{IKO},
\textit{cf.} Goren and Lauter's comment of page 477 \cite{GoLau}.
\end{ex}

The proof of Theorem \ref{thm:finiteness}
 relies heavily on various aspects of the stable Faltings height $h(A)$ of an
 abelian variety $A$ defined over a number field. Indeed, it follows by computing the said height of $\jac{C}$ in two
 different ways if $C$ is a genus $2$ curve defined over $\IQbar$. 
We will be able to bound one of these expressions from
 below and the other one from above. The resulting inequality will
 yield Theorem \ref{thm:discbound} below, a more precise version of our result above.

The first expression of the Faltings height of $\jac{C}$  uses the
 additional hypothesis that 
$C$ has CM as in Theorem \ref{thm:finiteness}. We will use
 Colmez's Conjecture, a theorem in our case due to Colmez \cite{ColmezAnnals}
 and Obus \cite{Obus} as the CM-field $K$ is an abelian extension of
 $\IQ$.  
It enables us to express $h(\jac{C})$ in terms of the logarithmic
 derivative of an $L$-function. 
Using this presentation, Colmez \cite{Colmez} found a lower bound for the
 Faltings height of an elliptic curve with CM when the endomorphism
 ring is a maximal order. The
 bound grows
  logarithmically in the discriminant of the CM-field. 
 We recall
 that the discriminant $\Delta_K$ of 
 $K$  is a
 positive integer as $K$ is a quartic CM-field. 
In our case we obtain a lower bound which is linear in  
 $\log \Delta_K$.
Let $B$ be a real number. So by
 the Theorem of Hermite-Minkowski
 there are only finitely many possibilities for $K$ up-to isomorphism if $h(\jac{C})\le
 B$. 
In the situation of Theorem \ref{thm:finiteness}, the endomorphism
 ring of $\jac{C}$ is the
 maximal order of $K$. So there are only finitely many possibilities
 for  $\jac{C}$ 
up-to isomorphism for fixed $B$.   Torelli's
 Theorem  will imply that there
are at most finitely many possibilities for $C$ up-to isomorphism.

Our  theorem   would follow if we could establish a uniform height
upper bound $B$ as before. 
We were not able to do this directly. 
Instead, we will show that for any
$\epsilon >0$ there is a constant $c(\epsilon,F)$ with
\begin{equation}
\label{eq:wtshJC}
h(\jac{C})\le \epsilon \log \Delta_K + c(\epsilon,F),
\end{equation}
with $F$ the maximal
totally real subfield  of $K$.
For small $\epsilon$ this upper bound is strong enough to compete with the logarithmic
lower bound coming from Colmez's Conjecture because $F$ is fixed in
our Theorem \ref{thm:finiteness}. 

The  upper bound requires the
second expression for the Faltings height of $\jac{C}$ alluded to
above. We still work with a curve $C$ of genus $2$ defined over 
$\IQbar$, but now do not require that $\jac{C}$ has CM. 
Suppose $C$ is the base change to $\IQbar$ of a curve $C_k$ 
 defined over a number field $k\subset\IQbar$. If $C_k$ 
has good reduction
at all places above $2$, then
Ueno \cite{Ueno} decomposed $h(\jac{C})$ 
into a sum over all places of $k$.\footnote{For other explicit formulas, the reader may consult
Autissier's Theorem 5.1 page 1457 of \cite{Aut06} or the second-named
author's Theorems 1.3 and 1.4 of \cite{Paz}.} 
 We present another expression for the Faltings height in 
Theorem \ref{hyperelliptic} by decomposing it into local terms. 
In contrast to Ueno's formula and with our application in mind, we
require that $\jac{C_k}$ has good reduction at all finite places but in turn allow 
$C_k$ to have bad reduction above $2$. 
Our proof of Theorem \ref{hyperelliptic} makes use of the reduction
theory of genus $2$ curves as developed  by Igusa \cite{Igusa:60} and
later by Liu \cite{Liu:stables,Liu:cond}
as well as Saito's generalisation \cite{Saito:Duke88}  of Ogg's formula for
the conductor of an elliptic curve. 
In our  decomposition of $h(\jac{C})$ into  local terms a non-zero
contribution at a
finite place indicates  that  the \emph{curve}
$C_k$ has bad stable reduction at the said place.
In other words, if $C$ has  good reduction everywhere, as
in Theorem \ref{thm:finiteness}, then the finite places do not
contribute to $h(\jac{C})$.
We will also  express the local contribution in $h(\jac{C})$ at the finite places
in terms of the classical Igusa invariants attached to $C$. 

The terms at the archimedean places in Theorem \ref{hyperelliptic} are expressed using a
Siegel modular cusp form  of degree $2$ and weight $10$.
We must bound these infinite places from above in order to arrive at
(\ref{eq:wtshJC}). 
One issue is that the archimedean local term has  a
logarithmic singularity along the divisor where the cusp form
vanishes. This vanishing locus corresponds to the principally
polarised abelian surfaces that are isomorphic to a product of
elliptic curves with the product polarisation. 
The jacobian variety
of a genus $2$ curve defined over $\IC$ is never such a product. 
So in our application, we are never  on the logarithmic singularity.

To obtain the upper bound for $h(\jac{C})$ we must ensure
first that not too many period matrices coming from the conjugates of $\jac{C}$
are  close to the logarithmic singularity. 
 Second, we must show that no period matrix is excessively close to
 the said singularity. 

To achieve the first goal we require Zhang's  Equidistribution Theorem
\cite{zhang:equicm} for 
Galois orbits of CM points on Hilbert
modular surfaces.
 Zhang's result relies on
the powerful subconvexity estimate due to Michel-Venkatesh \cite{MV};
Cohen \cite{Cohen:equi} and Clozel-Ullmo \cite{CU:2005} have related
equidistribution results.
Roughly speaking, equidistribution guarantees that only a small
proportion of period matrices coming from  the Galois orbit of $\jac{C}$ lie close to the
problematic divisor. 

However, equidistribution does not  rule out the
possibility that some period matrix is excessively close to the singular
locus.
To handle this contingency we use the following
 simple but crucial observation.  Inside Siegel's
fundamental domain, the divisor consists of diagonal period matrices
\begin{equation*}
\left(
\begin{array}{cc}
* & 0 \\ 0 & *
\end{array}\right). 
\end{equation*}
A period matrix lying close to this divisor has small off-diagonal
entries.  It is a classical fact that the  period matrix of a CM
abelian variety is algebraic.
Moreover, the degree over $\IQ$ of each entry is bounded from above
in terms of the dimension of the abelian variety. 
 We will use Liouville's inequality to bound the modulus of the
 off-diagonal entries from below.
This enables us to handle the contribution coming from the vanishing
 locus of the
cusp form.

The archimedean contribution to the Faltings height of $\jac{C}$ is
also unbounded near the cusp in Siegel upper half-space. We will again use
the subconvexity estimates to control this contribution on average. 

These various estimates combine to (\ref{eq:wtshJC}). The quantitative
nature of our approach allows for the following quantitative 
estimate which implies Theorem \ref{thm:finiteness}, as we will
see.
We will measure the amount of bad stable reduction of a curve $C_k$ of genus
$2$ defined over a number field $k$ using the 
minimal discriminant  $\Delta^0_{\rm min}(C)$ 
in the sense of Definition \ref{Liu}. It is a non-zero  ideal in  the
ring of integers of $k$ and  $\inorm{\Delta^0_{\rm min}(C)}$ denotes
its norm below.

\begin{thm}
\label{thm:discbound}
Let $F$ be a real quadratic number field.
There exists a constant $c(F)>0$ with the following property. 
Let $C$ be a curve of genus $2$ defined over $\overline{\mathbb{Q}}$
such 
that  $\Endo{\jac{C}}$ is the maximal order of an imaginary quadratic
extension $K$ of $F$ with $K/\IQ$ cyclic.
Then $C$ is the base change to $\IQbar$ of a curve $C_k$ defined over a number field
$k\subset\IQbar$ with
\begin{equation}
\label{eq:DiscKlb}
\log  \Delta_K\leq
c(F)\left(1+\frac{1}{[k:\mathbb{Q}]}\log \inorm{\Delta^0_{\rm min}(C_k)}\right),
\end{equation}
where the normalised norm on the right is invariant under finite field
extensions of $k$. 
\end{thm}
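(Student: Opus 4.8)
The plan is to estimate the stable Faltings height $h(\jac{C})$ in two ways: from below by a positive multiple of $\log\Delta_K$ via Colmez's Conjecture, and from above in terms of $\inorm{\Delta^0_{\rm min}(C_k)}$ together with an arbitrarily small multiple of $\log\Delta_K$ via the hyperelliptic decomposition of Theorem \ref{hyperelliptic}; the two inequalities then combine to (\ref{eq:DiscKlb}). Since $C$ is defined over $\IQbar$ it is defined over a number field, and by the Serre--Tate theorem we may enlarge this field to a number field $k\subset\IQbar$ for which $\jac{C_k}$ has good reduction at every finite place of $k$ and for which $C_k$ already attains its stable reduction. Both $h(\jac{C})$ and $\frac{1}{[k:\IQ]}\log\inorm{\Delta^0_{\rm min}(C_k)}$ are unchanged by such enlargements, so this is harmless; it also makes the normalised norm on the right of (\ref{eq:DiscKlb}) invariant under further extensions of $k$. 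Note that every conjugate of $\jac{C}$ under a complex embedding is the jacobian of a genus $2$ curve, hence is never a product of elliptic curves with the product polarisation.

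Since $K/\IQ$ is cyclic, hence abelian, Colmez's Conjecture holds for $K$ by the theorems of Colmez and Obus, and, using that $\Endo{\jac{C}}$ is the maximal order of $K$, it expresses $h(\jac{C})$ through the logarithmic derivatives at $s=0$ of the Dirichlet $L$-functions attached to the characters of $\Gal(K/\IQ)$. Estimating these as in Colmez's lower bound for the Faltings height of a CM elliptic curve, but now for the quartic field $K$ with its fixed real quadratic subfield $F$, gives constants $a(F)>0$ and $b(F)$ with
\begin{equation}
\label{eq:plan-lb}
h(\jac{C})\ \ge\ a(F)\log\Delta_K-b(F).
\end{equation}

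For the upper bound we apply Theorem \ref{hyperelliptic} to $C_k$, which writes $h(\jac{C})$ as $\frac{1}{[k:\IQ]}$ times a sum of local terms. The finite places contribute at most $c_1\log\inorm{\Delta^0_{\rm min}(C_k)}$, because by the reduction theory of Igusa, Liu and Saito the finite local term is a nonnegative measure of the bad stable reduction of the curve $C_k$, expressible through the minimal discriminant and the Igusa invariants and vanishing exactly where $C_k$ has good reduction. The archimedean places contribute an average, over the conjugates of $\jac{C}$, of $-\log$ of the Petersson-normalised Siegel cusp form of degree $2$ and weight $10$ evaluated at the corresponding period matrices in Siegel's fundamental domain. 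The crux of the argument is to show that for every $\epsilon>0$ this archimedean average is at most $\epsilon\log\Delta_K+c(\epsilon,F)$, which together with the finite-place estimate gives
\begin{equation}
\label{eq:plan-ub}
h(\jac{C})\ \le\ \frac{c_1}{[k:\IQ]}\log\inorm{\Delta^0_{\rm min}(C_k)}+\epsilon\log\Delta_K+c(\epsilon,F).
\end{equation}
This archimedean estimate is the main obstacle. The cusp form vanishes precisely on the locus of split abelian surfaces, which the conjugates of $\jac{C}$ avoid, but they may approach it and may also escape into the cusp, and the integrand is unbounded in both regimes. For the bulk of the Galois orbit one invokes Zhang's equidistribution theorem for Galois orbits of CM points on the Hilbert modular surface attached to $F$ --- which rests on the Michel--Venkatesh subconvexity bounds --- to show that only a negligible proportion of conjugates lie near the vanishing divisor or high in the cusp and to bound the integral of the integrable archimedean term; truncating at a level depending on $\epsilon$ keeps the remaining error below $\epsilon\log\Delta_K$. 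For the finitely many conjugates abnormally close to the diagonal divisor inside the fundamental domain one uses that the entries of the period matrix of a CM abelian surface are algebraic of degree over $\IQ$ bounded in terms of the dimension, so Liouville's inequality bounds the off-diagonal entries from below and hence the local singular contribution from above; after division by $[k:\IQ]$ this too is absorbed into $\epsilon\log\Delta_K$. Finally, combining (\ref{eq:plan-lb}) and (\ref{eq:plan-ub}) with the choice $\epsilon=a(F)/2$ yields
\begin{equation*}
\frac{a(F)}{2}\log\Delta_K\ \le\ \frac{c_1}{[k:\IQ]}\log\inorm{\Delta^0_{\rm min}(C_k)}+b(F)+c(a(F)/2,F),
\end{equation*}
and dividing through by $a(F)/2$ gives (\ref{eq:DiscKlb}) for a suitable constant $c(F)>0$ depending only on $F$.
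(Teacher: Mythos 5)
Your proposal is correct and follows essentially the same route as the paper: a Colmez--Obus lower bound linear in $\log\Delta_K$, the local decomposition of Theorem \ref{hyperelliptic} for the upper bound, Zhang's equidistribution together with the Michel--Venkatesh subconvexity bounds for the archimedean average, and Liouville's inequality for conjugates abnormally close to the diagonal divisor, combined with a choice of $\epsilon$ depending on $F$. The only sketch-level difference is that the paper controls the unbounded contribution near the cusp not by equidistribution-plus-truncation but by a direct subconvexity average over the ideal-class coset (Proposition \ref{prop:cgpbound}(i) together with the lower bound for the orbit size in Lemma \ref{lem:largegaloisorbit}), which is the same toolkit and yields the estimate you outline.
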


The choice of $k$ will be made during the proof. 
In Theorem \ref{hyperelliptic}(ii) we will be able to
express the normalised norm in terms of the Igusa invariants
of the curve $C$.

Theorem \ref{thm:discbound} implies finiteness results to more general
families than curves with potentially good reduction everywhere. Indeed, an
analog of Theorem \ref{thm:finiteness} is obtained for any collection
where the normalised norm of $\Delta^0_{\rm min}(C_k)$ is uniformly
bounded from above. 

Let $K$ be a quartic CM-field  that is
not bi-quadratic. 
Goren and Lauter \cite{GorenLauter:06} call
 a rational prime $p$ \textit{evil}
for $K$ if there is a principally polarised
 abelian variety with CM by the maximal order of $K$ 
whose reduction over a place  above $p$ is a product of two
supersingular elliptic curves with the product polarisation.
This corresponds to  a genus $2$ curve whose semi-stable
 reduction is bad at a place above $p$ and whose jacobian variety has CM
by the maximal order of $K$. Goren and Lauter proved that evilness
prevails by showing that a given
prime is evil for infinitely many $K$ containing a fixed real
quadratic field with trivial narrow-class group. 
In our Theorem \ref{thm:finiteness} the prime $p$ varies; using Goren and
Lauter's terminology we can restate our result as follows. For all but finitely many
quartic and cyclic CM number fields containing a given real quadratic
field there is an evil prime.

Let us now recall the fundamental result of Deligne and Mumford of \cite{DelMum}, Theorem 2.4 page 89.

\begin{thm}(Deligne-Mumford)
Let $k$ be a field with a discrete valuation and with algebraically closed
residue field. Let $C$ be a  curve over $k$ of genus at least $2$. Then the
jacobian variety $\Jac(C)$ has semi-stable reduction if and only if $C$ has semi-stable reduction.
\end{thm}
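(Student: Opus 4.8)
The plan is to route both directions through Grothendieck's semistable reduction criterion; one implication is soft and the other is the substance. Let $R$ be the valuation ring of $k$, fix a prime $\ell$ invertible in the residue field, and set $I=\Gal(\overline k/k)$; since the residue field is algebraically closed, $I$ is the inertia group and the $\ell$-adic cyclotomic character of $I$ is trivial. I would use three standard ingredients: Grothendieck's criterion, that $\Jac(C)$ has semistable reduction over $R$ if and only if $I$ acts unipotently on $T_\ell\Jac(C)$; the identification $T_\ell\Jac(C)\cong H^{1}_{\mathrm{et}}(C_{\overline k},\IZ_\ell)$ up to a Tate twist, which is immaterial here as the cyclotomic character is trivial; and the stable reduction theorem, which produces a finite Galois extension $k'/k$, with valuation ring $R'$ and again algebraically closed residue field, over which $C$ acquires stable reduction, with stable model $\mathcal C'/R'$ on which $G=\Gal(k'/k)$ acts, over the Galois action on $R'/R$, by uniqueness of the stable model. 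Write $\overline{\mathcal C}'$ for the special fibre of $\mathcal C'$, $\Gamma$ for its dual graph, and $\widetilde C_1,\dots,\widetilde C_r$ for the components of the normalisation of $\overline{\mathcal C}'$. Granting all this, it suffices to prove that $C$ has semistable reduction over $R$ if and only if $I$ acts unipotently on $H^{1}_{\mathrm{et}}(C_{\overline k},\IQ_\ell)$.

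For the forward implication I would assume $C$ has semistable reduction over $R$; since $C$ has genus at least $2$, contracting the unstable rational components produces a stable model $\mathcal C/R$, and one argues directly that $\Jac(C)$ has semiabelian reduction: the identity component of the relative Picard functor of $\mathcal C/R$ is representable by a smooth separated $R$-group scheme $\Pic^{0}_{\mathcal C/R}$ which, by Raynaud's theorem, is the identity component of the N\'eron model of $\Jac(C)$, and whose special fibre — being the $\Pic^{0}$ of a nodal curve over an algebraically closed field — is an extension of an abelian variety by a torus. Hence $I$ acts unipotently, by the elementary half of Grothendieck's criterion. Applying the same argument over $k'$ shows that $\Gal(\overline k/k')\subset I$ already acts unipotently on $H^{1}_{\mathrm{et}}(C_{\overline k},\IQ_\ell)$.

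For the converse I would use the monodromy-weight filtration $0\subset W_0\subset W_1\subset W_2=H^{1}_{\mathrm{et}}(C_{\overline k},\IQ_\ell)$ attached to the stable reduction over $k'$, whose graded pieces are $H^{1}(\Gamma,\IQ_\ell)$, $\bigoplus_i H^{1}(\widetilde C_i,\IQ_\ell)$ and $H_{1}(\Gamma,\IQ_\ell)(-1)$; the inertia $\Gal(\overline k/k')$ acts through the monodromy operator, which kills each graded piece, so it acts trivially on them, while $G=I/\Gal(\overline k/k')$ acts on the graded pieces through its geometric action on $\overline{\mathcal C}'$, hence on $\Gamma$ and on the configuration of the $\widetilde C_i$. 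Therefore $I$ acts unipotently on $H^{1}$ if and only if $G$ acts trivially on all three graded pieces, that is, trivially on $H_{1}(\Gamma,\IQ_\ell)$ and on each $H^{1}(\widetilde C_i,\IQ_\ell)$. A short analysis of the $G$-action on $\Gamma$ — using that a stable curve has no unstable rational components, that any non-trivial permutation of the components would be visible on $H_{1}(\Gamma)$ or on $\bigoplus_i H^{1}(\widetilde C_i)$, and that a positive-genus curve acts faithfully on its $H^{1}$ (Torelli) while a rational component is rigidified by its at least three special points — upgrades this to: $G$ acts trivially on the special fibre $\overline{\mathcal C}'$ itself. It then remains to descend the stable model: one must show that a stable model $\mathcal C'/R'$ carrying a $G$-action over $R'/R$ which is trivial on $\overline{\mathcal C}'$ descends to a semistable model of $C$ over $R$, equivalently that $\mathcal C'/G$, possibly after a minimal modification, is semistable over $R$.

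This last descent step is the main obstacle. Triviality of the $G$-action on the \emph{special} fibre does not produce an fppf descent datum along the ramified, hence non-\'etale, extension $R'/R$, so the quotient $\mathcal C'/G$ is not accessible by formal descent; instead one must work in \'etale-local coordinates at each point of $\overline{\mathcal C}'$ — at a smooth point, where $\mathcal C'$ is $R'$-smooth, and at a node, where $\mathcal C'$ has local equation $xy=(\varpi')^{e}$ for a uniformiser $\varpi'$ of $R'$ — write the $G$-action out explicitly, and check that passing to invariants over $R=(R')^{G}$ gives a ring that is again $R$-smooth, respectively of the shape $R[x,y]/(xy-\varpi^{e'})$. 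Controlling this, and in particular the interaction between the thicknesses $e$ of the nodes, the ramification index of $k'/k$, and any wild ramification, is the technical heart; everything else is an assembly of the representability of Picard functors, Raynaud's comparison with N\'eron models, Grothendieck's monodromy criterion, and the stable reduction theorem.
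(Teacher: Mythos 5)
The decisive step of your argument is missing. You reduce everything to descending the stable model along the ramified extension $R'/R$ once the $G$-action on the special fibre is known to be trivial, and you say yourself that controlling the quotient $\mathcal C'/G$ in local coordinates at smooth points and at nodes $xy=(\varpi')^{e}$, including the interaction with the ramification of $k'/k$ and possible wild ramification, is ``the technical heart'' --- and you do not carry it out. That step is precisely the substance of the hard implication, so the proposal is a plan rather than a proof. It is also genuinely delicate: since the residue field is algebraically closed, $k'/k$ is totally ramified, so there is no descent datum to invoke; the quotient $\mathcal C'/G$ is in general not semistable and one must show that a suitable modification of it is, and in residue characteristic $p$ dividing $\#G$ the computation of invariant rings is not routine. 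In addition, the intermediate claim that unipotence of $I$ forces $G$ to act trivially on the whole special fibre (not merely on the graded pieces of the weight filtration) is only gestured at; it needs faithfulness of the action of automorphisms of a stable curve on the pair $(\Gamma,\bigoplus_i H^1(\widetilde C_i))$, with a case analysis covering elliptic tails, rational components with their marked nodes, and component-swapping symmetries, none of which is actually done.

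For comparison: the paper does not prove this statement at all --- it cites Deligne--Mumford, Theorem 2.4 --- and the argument there (and in its standard expositions) avoids your descent problem in the hard direction entirely. One works over $R$ itself with a regular model of $C$ and Raynaud's theorem identifying $\mathrm{Pic}^0$ of that model with the identity component of the N\'eron model of $\Jac(C)$; semiabelianness of the N\'eron model is then shown to force the special fibre of the model to be reduced with only ordinary double points, i.e.\ semistable, with no passage to $k'$ and no equivariant quotient. Your forward implication (semistable curve model $\Rightarrow$ semiabelian Jacobian via $\mathrm{Pic}^0$ and Raynaud) is the standard one and is fine; for the converse you must either carry out the equivariant local analysis at the nodes honestly or replace that route by the minimal-regular-model/Raynaud argument.
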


The reader should keep in mind that even though a curve and its
jacobian variety
have 
semi-stable reduction simultaneously, it does not mean that the type of reduction (good or bad) is the same.

We conclude this introduction by posing some questions related to our
results and to $\mathcal{A}_g$, the coarse moduli space of principally
polarised abelian varieties of dimension $g\ge 1$.

The authors  conjecture
that there are only finitely many curves $C$ of genus $2$
 defined over $\IQbar$ which have good reduction everywhere and for
 which
$\jac{C}$ has complex multiplication by an order containing the ring
of integers of $F$. 

Our  restriction in Theorem \ref{thm:finiteness} that $K/\IQ$ is abelian  reflects the 
current status of 
 Colmez's
Conjecture. This conjecture is open for general quartic extensions of
 $\IQ$. However, Yang \cite{Yang10} has proved some non-abelian cases
for quartic CM-fields. 

Nakkajima-Taguchi \cite{NT} compute the Faltings height of an
elliptic curve with complex multiplication by a general order. 
They reduce the computation to the case of a maximal order which is
 covered by the Chowla-Selberg formula. As far as the
 authors know, no analog reduction is  known in dimension $2$. 

Our approach relies heavily on
equidistribution of Galois orbits on Hilbert modular surfaces. For this
reason we must fix the maximal total real subfield in our theorem.
However, it is natural to 
ask if the finiteness statement in Theorem \ref{thm:finiteness} holds
without fixing $F$. 
For example,
is the set of points in $\mathcal{A}_2$ consisting of 
 jacobians of curves 
defined over $\IQbar$ with CM and with good reduction everywhere
Zariski non-dense in $\mathcal{A}_2$? One could even  speculate whether
this set is finite.

In genus $g=3$ the image of the Torelli morphism again dominates 
$\mathcal{A}_3$.
 Here too this image contains infinitely jacobian varieties with CM.
 So we ask whether 
the set of CM points 
that come from genus $3$ curves with good reduction everywhere
 is Zariski non-dense in
 $\mathcal{A}_3$ or perhaps even finite. A simplified variant of this
question would ask for non-denseness or finiteness under the restriction that the CM-field
contains a fixed totally real cubic subfield.
Hyperelliptic curves of genus $3$ do not lie Zariski dense in the
moduli space of genus $3$ curves. Thus   a statement like
 Theorem \ref{hyperelliptic} for non-hyperelliptic curves would be necessary. This 
 would be interesting in its own right. 

Starting from genus $g= 4$ it is no longer true that the Torelli morphism 
dominates $\mathcal{A}_4$. The Andr\'e-Oort Conjecture, which is known
unconditionally in this case by work of Pila and Tsimerman \cite{PilaTsimerman:AxLAg}, yields  an additional
 obstruction for a curve of genus $4$ to have CM.
Coleman conjectured that there are only finitely many curves
of fixed genus $g\ge 4$ with CM. Although this conjecture is known to
be false if $g=4$ and $g=6$ by work of de Jong and
Noot \cite{deJongNoot:91}. In any case, a version of Theorem \ref{thm:finiteness} for higher genus curves is entangled with other problems in
arithmetic geometry.

In genus $g=1$ no finiteness result such as Theorem \ref{thm:finiteness}
can hold true, as an elliptic curve with complex multiplication has
potentially good reduction at all finite places. 
However, the first-named author 
proved  \cite{hab:junit} the following finiteness result which is reminiscent
of the current work. Up-to $\IQbar$-isomorphism there are
only finitely many elliptic curves with complex multiplication whose
$j$-invariants are algebraic units. 
This connection reinforces the heuristics that 
CM points behave similarly to integral points on a curve in the context
of Siegel's Theorem. Indeed, the jacobian variety of a curve of genus $2$
defined over $\IQbar$ and with good reduction everywhere corresponds to
 an algebraic point on $\mathcal{A}_2$ that is integral with respect to 
the divisor given by products of elliptic curves
with their product polarisation. 
 Theorem \ref{thm:finiteness} is a finiteness result on the set of
 certain CM points of $\mathcal{A}_2$ that are
integral with respect to the said divisor. It would be interesting to
know if \textit{e.g.} Vojta's Theorem 0.4 on integral points on
semi-abelian varieties \cite{Vojta:semiabII} has an
analog for
$\mathcal{A}_g$ and other Shimura varieties.

Finally, one can ask if the questions posed above remain valid in an
$S$-integer setting. In other words, are there only finitely many
curves $C$ of genus $2$ or $3$ which have
good reduction above  the complement of a finite set of primes, where
$\jac{C}$ has CM,
and where possibly further conditions are met?

The paper is structured as follows. In the next section we introduce
some basic notation. In Section \ref{sec:abvar} we cover some properties of abelian varieties with
complex multiplication, and recall Shimura's Theorem 
on the Galois orbit for the cases we are interested in. In
Section \ref{sec:faltingsheight} we recall first the Faltings height
of an abelian variety. 
Then in  Section \ref{sec:colmezconj} we use a known case of Colmez's Conjecture to
express the Faltings height of certain abelian varieties with CM.  
 Section \ref{sec:hyperelliptic} contains the  local
decomposition of the Faltings height of
a jacobian surface with good reduction
at all finite places. 
The archimedean places in this decomposition are bounded from above
in Section \ref{sec:arch}.
Finally, the proof of both our theorems is completed in
Section \ref{sec:proofs}. In the appendix we both express, using Colmez's Conjecture, and approximate
numerically,
using the result in Section \ref{sec:hyperelliptic},
the Faltings height of three jacobian varieties of genus $2$
curves. Each pair of 
heights are equal up-to the prescribed precision. The computations and
statements made in the appendix are not
necessary for the proof of our theorems.

\medskip

\textbf{Acknowledgments} The authors thank Qing Liu and Show-Wu Zhang  for helpful
conversations. The second-named author is supported by ANR-10-BLAN-0115
Hamot, ANR-10-JCJC-0107 Arivaf and DNRF Niels Bohr Professorship. Both
authors thank the 
 Universit\'e de Bordeaux, the Technical University of Darmstadt, and 
the University of
Frankfurt.  They also thank  the DFG
for supporting this collaboration through 
the project ``Heights and
unlikely intersections'' HA~6828/1-1.


\section{Notation}
\label{sec:notation}

In this paper it 
 will be convenient to take $\IQbar$ as the algebraic
closure of $\IQ$ in $\IC$ and all number fields to be subfields of
$\IQbar$.

The letter $i$ stands for an element of $\overline{\mathbb{Q}}$ such that $i^2=-1$.

We let $K^\times$ denote the multiplicative group of any field $K$. 
If $K$ is a number field, then $\Delta_K$ is its discriminant and
$\cg{K}$ is the class group of $K$. 
We use the symbol $\O{K}$ for the 
 ring of integers of $K$ and $\O{K}^\times$ is the group of units of
 $\O{K}$. If $\mathfrak A$ is a fractional ideal of $K$, then 
$[\mathfrak A]$ denotes its class in $\cg{K}$.
If $K/F$ is an extension of number fields, then $\diff{K/F}$ is its
different and $\mathfrak{d}_{K/F}$ is its relative discriminant. 
The  norm of $\mathfrak A$ is $\inorm{\mathfrak A}$, so
$\inorm{\mathfrak A}=[\O{K}:\mathfrak A]$ if $\mathfrak A$ is an ideal
of $\O{K}$.
For the  norm to $\mathfrak{A}$ relative to $K/F$,
 a fractional ideal of $F$,  we use the symbol
$\norm{\mathfrak A}{K/F}$. If $\alpha\in K$ then  
$\norm{\alpha}{K/F}\in F$
and $\tr{\alpha}{K/F}\in F$ are norm and trace, respectively, of $\alpha$ relative
to $K/F$. 

A place $\nu$ of $K$ is an absolute value on $K$ whose restriction to
$\IQ$ is the
standard absolute value on $\IQ$ or a $p$-adic absolute value for some
prime number $p$. The former places are called infinite or archimedean and we write
$ \nu\mid \infty$ whereas the latter are called finite or
non-archimedean and we write
$\nu\nmid \infty$ or $\nu\mid p$.
The set 
of finite
places is $M^{0}_K$. Any
 $\nu\in M^0_K$ corresponds to a maximal ideal of $\O{K}$ and we write
 ${\rm ord}_\nu(\mathfrak A) \in\IZ$ 
for the power with which this  ideal  appears in the
factorisation of $\mathfrak A$.
If $\alpha\in K^\times$ then 
${\rm ord}_\nu(\alpha) = {\rm ord}_\nu(\alpha\O{K})$.
 We write $K_\nu$ for the completion of $K$
with respect to $\nu$ and $d_\nu = [K_\nu:\IQ_{\nu'}]$ where $\nu'$ is the
restriction of $\nu$ to $\IQ$. 

We will often use $K$ to denote a CM-field and $F$ its totally real
subfield. 
Complex conjugation on $K$ will be denoted by
$\alpha\mapsto \overline \alpha$. 
If a CM-type $\Phi$ of $K$ is given, then we write $K^*$ 
for the associated reflex field and $\Phi^*$ for the associated reflex
CM-type. 

For the field of definition of an algebraic variety we use lower case
letters, $k$ for instance.


Let $g\ge 1$ be an integer and $\IH_g$ 
 the Siegel upper half-space, \textit{i.e.}  $g\times g$ symmetric matrices with entries in
 $\mathbb{C}$ and positive definite imaginary parts.
For brevity, $\IH=\IH_1$ denotes the upper half-plane. 
The symplectic group $\SP{2g}{\IZ}$ acts on $\IH_g$
by
\begin{equation*}
\gamma Z=(\alpha Z +\beta)(\lambda Z +\mu)^{-1}
\quad\text{if}\quad
\gamma=\left(\begin{array}{ll}
\alpha & \beta\\\lambda &\mu\end{array}\right)\in \SP{2g}{\IZ}.
\end{equation*}
We  recall $Z=(z_{lm})_{1\leq l,m\leq g}\in \IH_g$ is called Siegel
reduced and lies in Siegel's fundamental domain $\sfd_g$ 
if and only if the following properties are met.
\begin{enumerate}\label{FundamentDomain}
\item[(i)] For every $\gamma\in \SP{2g}{\IZ}$ one has
$\det\imag{\gamma Z}\leq\det\imag{Z}$
where $\imagS(\cdot)$ denotes imaginary part.
\item[(ii)] The real part is bounded by
$$\left\vert\Ree( z_{lm})\right\vert\leq \frac{1}{2}\quad\text{for
all}\quad (l,m)\in\left\{1,\ldots,g\right\}^2.$$
\item[(iii)$_a$] For all $l\in \{1,\ldots,g\}$ and all
$\xi=(\xi_1,\ldots,\xi_g)\in\IZ^g$ with 
$\mathrm{gcd}(\xi_l,\ldots,\xi_g)=1$,  
we have $\trans{\xi}\imag{ Z }\xi\geq \imag{z_{ll}}$.
\item[(iii)$_b$] For all $l\in \{1,\ldots,g-1\}$ we have $\imag{
z_{l,l+1}}\geq 0.$
\end{enumerate}

The properties (iii)$_a$ and (iii)$_b$ state that $\imag{Z}$ is
Minkowski reduced. 

We write
$\diag{\alpha_1,\ldots,\alpha_g}$ for the diagonal matrix 
with diagonal elements
$\alpha_1,\ldots,\alpha_g$ which are contained in some field. 


\section{Abelian varieties} 
\label{sec:abvar}
In the next sections we collect some statements on Hilbert modular
varieties and  abelian varieties
that we require later on. 

\subsection{Hilbert modular varieties}

\label{sec:hmv}

Theorem \ref{thm:finiteness} concerns jacobian varieties whose
endomorphism algebras contain a fixed  real quadratic number field. So
Hilbert modular surfaces  arise naturally. 
In this section we discuss some properties of a fundamental set of the
action of Hilbert modular groups on $\IH^g=\IH\times\cdots\times\IH$, the
$g$-fold product of the complex upper half-plane $\IH\subseteq\IC$.
Our main reference for this section is Chapter I of van der Geer's book \cite{vdGeer}.
However,  we will work in a  slightly modified setting and 
therefore provide some additional details.

Let $F$ be a totally real number field of degree $g$ with distinct real
embeddings $\varphi_1,\ldots,\varphi_g:F\rightarrow\IR$. Throughout
this section  
 $\mathfrak a$ is a fractional ideal of $\O{F}$. Later on we will   be
mainly interested in the case $\mathfrak a = \diff{F/\IQ}^{-1}$, the
inverse of the different of $F/\IQ$.

Let $\O{F}^{\times,+}$ be the group of totally positive units in
$\O{F}$ and
\begin{equation}
\label{eq:defGLp}
\GLp{}{\O{F}\oplus{\mathfrak a}}
= \left\{
\left(
\begin{array}{cc}
a & b \\ c & d
\end{array}
\right);\,\,
a,d\in \O{F},\, b\in \mathfrak{a}^{-1},\,
c\in \mathfrak{a},\, \text{and }
ad-bc \in \O{F}^{\times,+}\right\}.
\end{equation}
The group $\GL{2}{F}$ 
 acts on $\IP^1(F)$.
Through the $g$ embeddings $\varphi_1,\ldots,\varphi_g$ its  subgroup
$\GLp{2}{F}$ of matrices with coefficients in $F$ and totally positive determinant
 acts on $\IH^g$ by fractional linear transformations.
We are interested in the restriction of this action to
the subgroup $\GLp{}{\O{F}\oplus\mathfrak a}$. As this group's center
acts trivially on $\IH^g$ let us consider also
\begin{equation}
\label{eq:definehatGamma}
  \widehat\Gamma(\mathfrak a) = \GLp{}{\O{F}\oplus\mathfrak a} /\left\{
  \left(\begin{array}{cc}
    u &  \\ & u
  \end{array}\right);\,\, u \in \O{F}^\times \right\}.
\end{equation}
The group $ \widehat\Gamma(\mathfrak a)$ also acts on $\IP^1(F)$.

The $\widehat\Gamma(\mathfrak a)$-action on $\IP^1(F)$ consists of 
 $h=\#\cg{F} < +\infty$  orbits which represent the cusps of
$\widehat \Gamma(\mathfrak a)\backslash \IH^g$. 
For $\eta = [\alpha:\beta] \in\IP^1(F)$ with $\alpha,\beta \in F$
and $\tau=(\tau_1,\ldots,\tau_g)\in\IH^g$ we define
\begin{equation*}
  \mu(\eta,\tau) = \inorm{\alpha\O{F}+\beta \mathfrak a^{-1}}^2
  \prod_{l=1}^g
\frac{\imag{\tau_l}}{|\varphi_l(\alpha)-\varphi_l(\beta)\tau_l|^2} >
0.
\end{equation*}
The quantity $\mu(\eta,\tau)^{-1/2}$ measures the distance of the point
in $\widehat \Gamma(\mathfrak a)\backslash \IH^g$ represented by $\tau$ to the cusp represented by $\eta$. 

If $\gamma=\left(
\begin{array}{cc}
  a  & b \\ c & d
\end{array}\right)
\in\GLp{2}{F}$, then  
\begin{equation}
\label{eq:disttrans}
 \mu(\gamma\eta,\gamma \tau)=\frac{\mu(\eta,\tau)
 }{\norm{\det\gamma}{F/\IQ}^2}
\frac{\inorm{\alpha'\O{F}+\beta'\mathfrak a^{-1}}^2}{\inorm{\alpha\O{F}+\beta\mathfrak a^{-1}}^2}
\end{equation}
where 
$\alpha' = a\alpha + b\beta$ and $\beta'=c\alpha + d\beta$. 

Let us study two important special cases.  
First, if $\gamma\in\GLp{}{\O{F}\oplus\mathfrak a}$, then
$\det\gamma\in\O{F}^{\times,+}$ and the ideals appearing
on the right of (\ref{eq:disttrans}) coincide. So the 
equality simplifies to $\mu(\gamma\eta,\gamma\tau) = \mu(\eta,\tau)$. 
Second, let us suppose $\gamma\in \SL{2}{F}$ and
fix a positive integer $\lambda$ with $\lambda a,\lambda d\in \O{F}$, 
$\lambda b\in \mathfrak a^{-1}$, and $\lambda c \in \mathfrak a$. 
Then $\lambda\alpha'\O{F}+\lambda\beta'\mathfrak{a}^{-1}\subseteq
\alpha\O{F}+\beta\mathfrak a^{-1}$ and so the norm of the ideal on the
left is at least the norm of the ideal on the right. 
Equality (\ref{eq:disttrans}) implies 
  $\mu(\gamma\eta,\gamma\tau) \ge  \lambda^{-2g} \mu(\eta,\tau)$.
On applying the same argument to $\gamma^{-1}$ we find
\begin{equation}
\label{eq:distcompare}
  c^{-1} \le \frac{\mu(\gamma\eta,\gamma\tau)}{\mu(\eta,\tau)} \le c
\end{equation}
where $c>0$ depends only on $\gamma$ and not on $\eta\in\IP^1(F)$ or on $\tau\in\IH^g$. 

A \textit{fundamental set} for the action of $\widehat \Gamma(\mathfrak a)$
on $\IH^g$ is a  subset of $\IH^g$ that meets 
all $\widehat\Gamma(\mathfrak a)$-orbits. 
We do not require a fundamental set to be connected
and we do not exclude that two distinct points are in the same orbit.
In the following we will
describe a fundamental set much as van der Geer's
construction of a fundamental domain  for the action of 
$\SL{2}{\O{F}}$ on $\IH^g$ in Chapter  I.3 \cite{vdGeer}.

 First, let us fix a set of representatives
$\eta_1=[\alpha_1:\beta_1],\ldots,\eta_h=[\alpha_{h}:\beta_{h}]\in\IP^1(F)$ of the cusps.
We may assume $\alpha_1=1$ and $\beta_1=0$, \textit{i.e.}
$[\alpha_1:\beta_1]=\infty$.
Only a slight variation in the argumentation of Lemma I.2.2
\cite{vdGeer} is required to obtain
\begin{equation}
  \label{eq:cuspdistlb}
\max\{\mu(\eta_1,\tau),\ldots,\mu(\eta_h,\tau)\} \gg 1;
\end{equation}
the constants implicit in
$\ll$ and $\gg$ here and below depend only on $\mathfrak a$ and the $\alpha_m, \beta_m$. 

\begin{prop}
\label{prop:fundamentalset}
  There is a closed fundamental set $\hfd(\mathfrak a)$ for the
  action
of $\widehat\Gamma(\mathfrak a)$ on $\IH^g$ with the following
property. 
 If $\tau=(\tau_1,\ldots,\tau_g)\in \hfd(\mathfrak a)$, then 
$|\real{\tau_l}|\ll 1$ 
and
\begin{equation}
\label{eq:prodimagtaulb}
 \left(\max_{1\le m\le h} \mu(\eta_m,\tau)\right)^{-2/g}\ll  
 \imag{\tau_l} 
 \ll \left(\max_{1\le m\le h} \mu(\eta_m,\tau)\right)^{1/g}
\end{equation}
for all $1\le l\le g$.
\end{prop}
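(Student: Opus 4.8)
The plan is to adapt van der Geer's construction of a fundamental domain for $\SL{2}{\O{F}}$ acting on $\IH^g$ (Chapter I.3 of \cite{vdGeer}) to the slightly larger group $\widehat\Gamma(\mathfrak a)$, tracking the cusp-distance function $\mu$ throughout. First I would fix the representatives $\eta_1,\ldots,\eta_h\in\IP^1(F)$ of the cusps as in the paragraph preceding the statement, and choose for each $m$ an element $g_m\in\GLp{2}{F}$ with $g_m\infty=\eta_m$ (with $g_1=\mathrm{id}$). The key point is that near the cusp $\eta_m$ the function $\mu(\eta_m,\cdot)$ plays the role that $\imag{\tau}$ plays near $\infty$: by the transformation formula \eqref{eq:disttrans} and the bounded-distortion bound \eqref{eq:distcompare}, the quantity $\mu(\eta_m,\tau)$ is, up to multiplicative constants depending only on $\mathfrak a$ and the $g_m$, equal to $\prod_{l=1}^g \imag{(g_m^{-1}\tau)_l}$.

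The construction itself proceeds in two stages. Stage one: cover $\IH^g$ by finitely many "cusp sectors'' $\mathcal U_m$, where $\mathcal U_m$ consists of those $\tau$ for which $\mu(\eta_m,\tau)=\max_{1\le m'\le h}\mu(\eta_{m'},\tau)$; by \eqref{eq:cuspdistlb} on $\mathcal U_m$ this maximum is $\gg 1$, so $\mu(\eta_m,\tau)\gg 1$ there. Stage two: inside each $\mathcal U_m$ pull back by $g_m^{-1}$ to the cusp at $\infty$, where the stabiliser of $\infty$ in the pulled-back group is (up to finite index) a group of the form $\tau\mapsto \varepsilon\tau+b$ with $\varepsilon\in\O{F}^{\times,+}$ and $b$ in a lattice; the classical Siegel/van der Geer argument produces a fundamental set for this stabiliser action of the shape $\{|\real{\tau_l}|\ll 1,\ \prod_l\imag{\tau_l}\gg 1,\ \imag{\tau_l}\asymp (\prod \imag{\tau_{l'}})^{1/g} \text{ up to bounded ratio of the coordinates}\}$ — i.e.\ the real parts are bounded and the imaginary parts are comparable to one another after normalising by their product. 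Transporting this back by $g_m$ and using the $\mu$-versus-$\prod\imag{}$ comparison from the previous paragraph turns "$\prod_l\imag{(g_m^{-1}\tau)_l}$'' into $\mu(\eta_m,\tau)=\max_m \mu(\eta_m,\tau)$, giving exactly the two-sided bound \eqref{eq:prodimagtaulb} and $|\real{\tau_l}|\ll 1$. Taking $\hfd(\mathfrak a)$ to be the (closure of the) union of these transported pieces over $m=1,\ldots,h$ gives a closed fundamental set, since the $\mathcal U_m$ already cover $\IH^g$ and each piece is fundamental for the relevant stabiliser.

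The main obstacle I anticipate is bookkeeping rather than conceptual: van der Geer works with $\SL{2}{\O{F}}$ and with $\mathfrak a=\O{F}$, whereas here the determinants range over all of $\O{F}^{\times,+}$ (not just squares of units) and the off-diagonal entries live in $\mathfrak a^{\pm1}$. One must check that the cusp stabilisers in $\widehat\Gamma(\mathfrak a)$ are still, up to finite index, of the semidirect-product form "totally positive units acting on a full-rank lattice of translations,'' and that passing to the image in $\widehat\Gamma(\mathfrak a)$ (killing the scalar units) does not disturb this — this is where the quotient in \eqref{eq:definehatGamma} matters. The other point needing care is that all implicit constants genuinely depend only on $\mathfrak a$ and on the chosen $\alpha_m,\beta_m$: this is ensured because there are only finitely many cusps, finitely many $g_m$, and the distortion bound \eqref{eq:distcompare} has a constant depending only on the group element $\gamma$, here one of the finitely many $g_m^{\pm1}$. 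Everything else — the inequalities relating $\imag{\tau_l}$ to $\mu$, and $|\real{\tau_l}|\ll1$ — then follows formally from \eqref{eq:disttrans}, \eqref{eq:distcompare}, \eqref{eq:cuspdistlb} and the classical description of a fundamental set for an affine action of the form $\tau\mapsto\varepsilon\tau+b$ on $\IH^g$.
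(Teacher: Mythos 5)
Your construction is the same as the paper's: decompose $\IH^g$ into spheres of influence of the $h$ cusps, move the relevant cusp to $\infty$ by a fixed matrix, use the stabiliser of $\infty$ in the conjugated group (the identity $\gamma\widehat\Gamma(\mathfrak a)\gamma^{-1}=\GLp{}{\O{F}\oplus\mathfrak a\mathfrak b^2}$ is what makes your ``up to finite index'' worry harmless) to normalise real parts and make the imaginary parts mutually comparable, and take the finite union of the transported pieces. Up to that point the proposal matches the paper step for step, and your bookkeeping remarks (finitely many cusps, constants depending only on the fixed $g_m$ via \eqref{eq:distcompare}) are exactly right.

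The gap is in the last sentence of your second paragraph and the final claim that the bounds on $\tau$ itself ``follow formally.'' What the normalisation at $\infty$ gives you are bounds on the coordinates of $g_m^{-1}\tau=\tau''$: bounded real parts, $\imag{\tau''_l}\gg 1$, and $\imag{\tau''_l}\ll\mu(\eta_m,\tau)^{1/g}$. The proposition, however, asserts bounds on the coordinates of $\tau=g_m\tau''$, and these do \emph{not} transport formally: the comparison $\mu(\eta_m,\tau)\asymp\prod_l\imag{\tau''_l}$ controls only a product of the $\tau''$-coordinates, while the coordinates of $\tau$ near a finite cusp $\eta_m\neq\infty$ are not mutually comparable at all (they shrink towards the real axis as one approaches the cusp). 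Indeed, if the transport were formal in the way you describe, one would expect the symmetric bound $\imag{\tau_l}\asymp(\max_m\mu(\eta_m,\tau))^{1/g}$, which is false; the correct lower exponent is the weaker $-2/g$ of \eqref{eq:prodimagtaulb}, and it arises only from unwinding the M\"obius transformation coordinate by coordinate: writing $\imag{\tau_l}=\imag{\tau''_l}/|\beta_l\tau''_l+\beta^*_l|^2$ with $\beta_l,\beta^*_l$ the real embeddings of the entries of $g_m$, one bounds the denominator by $\ll|\tau''_l|^2\ll\mu(\eta_m,\tau)^{2/g}$ for the lower bound, and treats the cases $\beta_l\neq 0$ and $\beta_l=0$ separately for the upper bound on $\imag{\tau_l}$ and for $|\real{\tau_l}|\ll 1$. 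This coordinate-wise computation (the second half of the paper's proof) is the actual source of the exponents in \eqref{eq:prodimagtaulb} and needs to be carried out; it cannot be replaced by the product-level comparison you invoke.
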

\begin{proof}
A given $\tau\in\IH^g$ is in
\begin{equation*}
  S = \left\{\tau' \in\IH^g;\,\, \mu(\eta_m,\tau') =\max\{\mu(\eta_1,\tau'),\ldots,\mu(\eta_h,\tau') \}
 \right\} \quad\text{for some $m$},
\end{equation*}
 the sphere of influence of the cusp $\eta_m$. 
We abbreviate
 $\eta=\eta_m$ and $\alpha=\alpha_m$, as well as $\beta=\beta_m$. 
Thus
\begin{equation}
\label{eq:mulb}
  \mu(\eta,\tau) \gg 1
\end{equation}
by (\ref{eq:cuspdistlb}).
Let us define
 the fractional ideal $\mathfrak b =
 \alpha\O{F}+\beta\mathfrak{a}^{-1}$ of $\O{F}$. 
Next we choose $\gamma\in\SL{2}{F}$ with 
\begin{equation*}
  \gamma^{-1} = 
\left(  \begin{array}{cc}
    \alpha & \alpha^*  \\ \beta & \beta^*
  \end{array}\right)
\end{equation*}
where $\alpha^* \in (\mathfrak a \mathfrak b)^{-1}$ and $\beta^*\in \mathfrak b^{-1}$. So
$\gamma \eta = \infty$ and we observe that an
 application of (\ref{eq:distcompare})
and (\ref{eq:mulb}) yields
\begin{equation*}
  \mu(\infty,\gamma\tau)=\mu(\gamma\eta,\gamma\tau) \gg\mu(\eta,\tau)
  \gg 1.
\end{equation*}
The left-hand side is  $\imag{\tau'_1}\cdots\imag{\tau'_g}\gg 1$ where
$\gamma\tau=(\tau'_1,\ldots,\tau'_g)$.

We observe
\begin{equation}
\label{eq:conjgroup}
\gamma\widehat\Gamma(\mathfrak a)\gamma^{-1}=
  \gamma \GLp{}{\O{F}\oplus\mathfrak a} \gamma^{-1} 
=\GLp{}{\O{F}\oplus\mathfrak a\mathfrak b^2}.
\end{equation}
and use $\GLp{}{\O{F}\oplus\mathfrak a\mathfrak b^2}$ 
to act on $\gamma\tau$.
 In fact, we will use  only  elements in the
stabiliser of $\infty$, \textit{i.e.} the subgroup of upper triangular
matrices in $\GLp{}{\O{F}\oplus\mathfrak a\mathfrak b^2}$.
As in Chapter I.3 \cite{vdGeer} 
we find  $\gamma'$ in the said group such that if
$\gamma'\gamma\tau=(\tau''_1,\ldots,\tau''_g)=\tau''$ then
\begin{equation}
  \label{eq:fdineq1}
|\real{\tau''_l}|\ll 1\quad\text{and}\quad \imag{\tau''_l} \ll
\imag{\tau''_{l'}}\quad
\text{for all}\quad  1\le l,l'\le g.
\end{equation}
We note 
$\imag{\tau''_1}\cdots\imag{\tau''_g} =
\imag{\tau'_1}\cdots\imag{\tau'_g}\gg 1$ and thus 
\begin{equation}
\label{eq:fdineq2}
 \imag{\tau''_l}\gg 1\quad\text{for all}\quad 1\le l\le g.
\end{equation}

The point $\gamma^{-1}\gamma'\gamma\tau=\gamma^{-1}\tau''$ lies in the
$\widehat\Gamma(\mathfrak a)$-orbit of $\tau$ by (\ref{eq:conjgroup}).
We define  $D$  as the set of $\tau''$ that
satisfy (\ref{eq:fdineq1}) and (\ref{eq:fdineq2}).
We take 
$\gamma^{-1}D$ as a part of the fundamental set whose
entirety $\hfd(\mathfrak a)$  is obtained by taking the union of the
sets coming from all  $h$ cusps.
Observe that $\gamma^{-1}D$ is closed in $\IH^g$, and so
$\hfd(\mathfrak a)$ is closed too. 

It remains to prove that the various bounds in the assertion hold for
 $\gamma^{-1}\tau'' \in \gamma^{-1}D$. To simplify notation we write
$\tau=\gamma^{-1}\tau''$ and recall that $\gamma\eta=\infty$ still holds.
We use the second set of inequalities in (\ref{eq:fdineq1}) to bound 
$\imag{\tau''_l}\ll (\imag{\tau''_1}\cdots\imag{\tau''_g})^{1/g} =
\mu(\infty,\tau'')^{1/g}= \mu(\gamma\eta,\tau'')^{1/g}
\ll \mu(\eta,\gamma^{-1}\tau'')^{1/g}$. 
So $\imag{\tau''_l}\ll \mu(\eta,\tau)^{1/g}$ and in particular
$\mu(\eta,\tau)\gg 1$ by (\ref{eq:fdineq2}). 
We find 
$|\tau''_l|\ll \mu(\eta,\tau)^{1/g}$ as the real part of $\tau''_l$
is bounded by (\ref{eq:fdineq1}).
Now
\begin{equation*}
 \imag{\tau_l}= \imag{\gamma_l^{-1}\tau''_l} =
  \frac{\imag{\tau''_l}}{|\beta_l\tau''_l+\beta_l^*|^2}
  \ge \frac{\imag{\tau''_l}}{(|\beta_l\tau''_l| + |\beta_l^*|)^2} \gg \frac{1}
{\mu(\eta,\tau)^{2/g}}
\end{equation*}
where the subscript $l$ in $\beta_l,\beta_l^*,$ and $\gamma_l$ indicates that
 $\varphi_l$ was applied. 
This  yields the lower bound in
(\ref{eq:prodimagtaulb}).

To deduce the upper bound we split-up into two cases. 
If $\beta_l\not=0$, then $\imag{\gamma_l^{-1}\tau_l''} \le
\imag{\tau''_l}/(|\beta_l|^2 \imag{\tau''_l}^2) \ll 1$ and in particular
$\imag{\gamma^{-1}\tau_l''} \ll \mu(\eta,\tau)^{1/g}$. So the upper
bound holds in this case. 
What if $\beta_l=0$? Then $\imag{\gamma_l^{-1}\tau_l''} =
\imag{\tau''_l}/|\beta^*_l|^2$.
Further up we have seen that $\imag{\tau''_l}\ll \mu(\eta,
\tau)^{1/g}$ and the upper bound follows from this.

To bound the real part we use
\begin{equation*}
|\real{\tau_l}|=  |\real{\gamma_l^{-1}\tau''_l}| = \frac{\left|\alpha_l\beta_l|\tau''_l|^2 +
    \alpha_l^*\beta_l^* +(\alpha_l\beta_l^*+\alpha_l^*\beta_l) \real{\tau''_l}\right|}{|\beta_l\tau''_l+\beta_l^*|^2}.
\end{equation*}
The denominator is at least $|\beta_l|^2\imag{\tau''_l}^2 \gg 1$
if $\beta_l\not=0$ and it equals $|\beta^*_l|^2\gg 1$ if $\beta_l=0$.
Using elementary estimates we conclude $|\real{\tau_l}|\ll
1$ by treating separately the cases
$|\beta_l\tau''_l|> 2|\beta_l^*|$ and $|\beta_l\tau''_l|\le 2|\beta_l^*|$. 
\end{proof}

\subsection{Abelian varieties with complex multiplication}
\label{sec:abvarcm}

In this section we recall some basic facts on a certain class of
 abelian varieties with
CM. Furthermore, we prove several estimates that will play important roles in sections to come.

Let $K$ be a CM-field with $[K:\IQ]=2g$ 
and $F$ the maximal, totally real subfield of $K$.


We suppose that
 $A$ is an abelian variety of dimension $g$ defined over $\IC$
such that there is a ring homomorphism
from an order $\cO$ of $K$ into
$\Endo{A}$ which maps $1$ to the identity map on $A$.
In addition, we suppose that $A$ is  principally
polarised.

As $[K:\IQ] = 2\dim A$,
the natural action of $K$ on the tangent space of $A$ at $0\in A(\IC)$
is equivalent to a direct sum of  
embeddings $\varphi_1,\ldots,\varphi_g:K\rightarrow \IC$ which are
distinct modulo complex conjugation. In this way, $A$ gives rise to a
CM-type
$\Phi=\{\varphi_1,\ldots,\varphi_g\}$  of $K$. 
To keep notation elementary we 
fix a basis of 
 said tangent space and identify it  with $\IC^g$ such that the action of $K$ is
given by
\begin{equation*}
\alpha (z_1,\ldots,z_g) =
(\varphi_1(\alpha) z_1,\ldots,\varphi_g(\alpha) z_g)
\end{equation*}
for $(z_1,\ldots,z_g)\in\IC^g$.

By abuse of notation we write
$\Phi(\alpha)=(\varphi_1(\alpha),\ldots,\varphi_g(\alpha))$ if
$\alpha\in K$.

The period lattice of $A$ is a discrete
subgroup $\Pi\subseteq \IC^g$ of rank $2g$. 
After scaling coordinates we may suppose
that $(1,\ldots,1)\in \Pi$.

 The set
\begin{equation*}
\mathfrak M= \{ \alpha\in K;\,\, \Phi(\alpha) \in \Pi \}
\end{equation*}
is an $\O{F}$-module since $\O{F}$ acts on the period lattice via $\Phi$. It is
finitely generated as such and it contains an order of $K$. 
Moreover, $\mathfrak{M}$ is torsion-free and $\O{F}$ is a Dedekind ring
thus $\mathfrak{M}$ is a projective $\O{F}$-module. It is of rank $2$
making it isomorphic to $\O{F}\oplus \mathfrak{a}$
where $\mathfrak{a}$ is a fractional ideal of $\O{F}$. 
Now $\mathfrak a$ is uniquely determined by its ideal class and latter
on we will show that $\mathfrak a$ lies in the class of
$\diff{F/\IQ}^{-1}$. Let us fix $\omega_1,\omega_2\in K\ssm\{0\}$ 
with 
\begin{equation}
\label{eq:MOFbasis}
 \mathfrak M = \omega_1 \O{F}+\omega_2 \mathfrak{a}. 
\end{equation}
We note 
$\omega_1\overline\omega_2-\overline\omega_1\omega_2\not=0$, where as
usual $\overline{\cdot}$ denotes complex conjugation on $K$, 
and define
\begin{equation}
\label{eq:definet0}
  t_0 = (\omega_1\overline\omega_2-\overline\omega_1\omega_2)^{-1}.
\end{equation}
Observe that if the order $\cO$ equals $\O{K}$, then $\mathfrak{M}$ is
a fractional ideal of $\O{K}$. It this case we will use the symbol
$\mathfrak{A}$ to denote $\mathfrak{M}$. 

As $A$ is principally polarised it comes with an $\IR$-bilinear form 
$E:\IC^g\times\IC^g\rightarrow \IR$ which restricts to an integral symplectic form of
determinant $1$ on $\Pi\times\Pi$. 
We note that
\begin{equation*}
H(z,w)= E(iz,w) + i E(z,w)
\end{equation*}
is a positive definite hermitian form whose imaginary part is integral
on $\Pi\times \Pi$.


Our  form $E$ satisfies the condition of Theorem 4,
Chapter II in Shimura's book  \cite{Shimura}.
So there is $t\in K$ with
$\overline t=-t$ and
 $\imag{\varphi_m(t)}>0$ for all $m$,
 such that
\begin{equation}
\label{eq:defE}
E(z,w) = \sum_{j=1}^g \varphi_j(t) (\overline{z_j} {w_j}-{z_j}
\overline{w_j})
\end{equation}
for all $z=(z_1,\ldots,z_g)$ and $w=(w_1,\ldots,w_g)$ in $\IC^g$.
Then 
\begin{equation}
\label{eq:Etrace}
E(\Phi(\alpha),\Phi(\beta)) = \tr{t\overline{\alpha}\beta}{K/\IQ}
\end{equation}
for all $\alpha,\beta\in K$.

\begin{lem} 
\label{lem:OFmodule}
Let us keep the notation from above
and also set $u = t/t_0$.
\begin{enumerate}
\item[(i)] We have $u \in F$ and
  \begin{equation*}
   E(\Phi(\mu\omega_1+\lambda\omega_2),\Phi(\mu'\omega_1+\lambda'\omega_2))
   =   \tr{u(\mu'\lambda-\mu\lambda')}{F/\IQ}
  \end{equation*}
for all $\mu,\mu,\lambda,\lambda'\in F$. 
\item[(ii)] We have $u\mathfrak a = \diff{F/\IQ}^{-1}$. 
\end{enumerate}
\end{lem}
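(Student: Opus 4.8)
The plan is to prove (i) by a purely formal manipulation of the trace pairing (\ref{eq:Etrace}), and then to deduce (ii) by substituting the formula from (i) into the hypothesis that $E$ is unimodular on $\Pi\times\Pi$, recognising the inverse different through the trace form on $\O{F}$.

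For (i), I would first observe that (\ref{eq:definet0}) gives $t_0^{-1}=\omega_1\overline\omega_2-\overline\omega_1\omega_2$, hence $\overline{t_0^{-1}}=-t_0^{-1}$ and so $\overline{t_0}=-t_0$; combined with $\overline t=-t$ this yields $\overline u=u$, so $u=t/t_0$ lies in the fixed field $F$ of complex conjugation. Next, using $\tr{x}{K/\IQ}=\tr{\tr{x}{K/F}}{F/\IQ}$, the identity $\tr{x}{K/F}=x+\overline x$ for $x\in K$, and $\overline t=-t$, I would rewrite (\ref{eq:Etrace}) as
\begin{equation*}
E(\Phi(\alpha),\Phi(\beta))=\tr{t\,(\overline\alpha\beta-\alpha\overline\beta)}{F/\IQ}\qquad\text{for all }\alpha,\beta\in K,
\end{equation*}
the argument of the trace being indeed fixed by complex conjugation. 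Substituting $\alpha=\mu\omega_1+\lambda\omega_2$ and $\beta=\mu'\omega_1+\lambda'\omega_2$ with $\mu,\mu',\lambda,\lambda'\in F$ (so $\overline\alpha=\mu\overline\omega_1+\lambda\overline\omega_2$, and likewise for $\overline\beta$), the $\mu\mu'$- and $\lambda\lambda'$-terms cancel and one is left with $\overline\alpha\beta-\alpha\overline\beta=(\mu'\lambda-\mu\lambda')(\overline\omega_1\omega_2-\omega_1\overline\omega_2)=(\mu'\lambda-\mu\lambda')\,t_0^{-1}$. Multiplying by $t$ gives $u(\mu'\lambda-\mu\lambda')$ and hence the claimed identity.

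For (ii), I would first note that $\Phi(\mathfrak M)=\Pi$. Indeed $\Pi$ is preserved by the action of $\cO$ on $\IC^g$, which is coordinatewise through $\Phi$, so $\Pi\otimes_\IZ\IQ$ is a nonzero module over $\cO\otimes_\IZ\IQ=K$; having $\IQ$-dimension $2g=[K:\IQ]$ it is free of rank one over $K$, and since $(1,\dots,1)=\Phi(1)\in\Pi$ it equals $\Phi(K)$. Hence $\Pi=\Phi(K)\cap\Pi=\Phi(\mathfrak M)$. Now fix $\IZ$-bases $e_1,\dots,e_g$ of $\O{F}$ and $f_1,\dots,f_g$ of $\mathfrak a$; then $\Phi(\omega_1e_1),\dots,\Phi(\omega_1e_g),\Phi(\omega_2f_1),\dots,\Phi(\omega_2f_g)$ is a $\IZ$-basis of $\Pi$, and by part (i) the Gram matrix of $E$ in this basis has vanishing diagonal blocks, so it equals $\left(\begin{array}{cc}0&-T\\ \trans{T}&0\end{array}\right)$ with $T=(\tr{u\,e_if_j}{F/\IQ})_{i,j}$, an integer matrix because $E$ is integral on $\Pi\times\Pi$. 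Its determinant is $(\det T)^2$, so the hypothesis that $E$ is symplectic of determinant $1$ forces $\det T=\pm1$.

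Finally, $T$ is (up to a transposition) the matrix of the $\IZ$-linear map $\mathfrak a\to\mathrm{Hom}_{\IZ}(\O{F},\IZ)$ sending $\lambda$ to $x\mapsto\tr{u\lambda x}{F/\IQ}$; this takes values in $\mathrm{Hom}_{\IZ}(\O{F},\IZ)$ since $\tr{u x\lambda}{F/\IQ}=-E(\Phi(x\omega_1),\Phi(\lambda\omega_2))\in\IZ$ for $x\in\O{F}$ and $\lambda\in\mathfrak a$, and it is bijective because $\det T=\pm1$. Composing with the canonical isomorphism $\mathrm{Hom}_{\IZ}(\O{F},\IZ)\xrightarrow{\ \sim\ }\diff{F/\IQ}^{-1}$ that sends $x\mapsto\tr{xy}{F/\IQ}$ to $y$, this map becomes multiplication by $u$; being onto, its image $u\mathfrak a$ equals $\diff{F/\IQ}^{-1}$. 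The expansion in (i) and the basis bookkeeping in (ii) are routine; the step needing the most care is this translation of ``$E$ unimodular on $\Pi\times\Pi$'' into ``$u\mathfrak a=\diff{F/\IQ}^{-1}$'', which relies both on the block structure of the Gram matrix, making its determinant a perfect square, and on the classical identification of $\diff{F/\IQ}^{-1}$ with the $\IZ$-dual of $\O{F}$ for the trace form.
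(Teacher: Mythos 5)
Your proof is correct, and while part (i) follows essentially the paper's own computation, part (ii) takes a genuinely different route. For (i), both you and the paper reduce $E(\Phi(\alpha),\Phi(\beta))=\tr{t\overline\alpha\beta}{K/\IQ}$ to a trace over $F/\IQ$ and cancel the $\mu\mu'$- and $\lambda\lambda'$-terms; note only that your intermediate factorisation should read $(\mu'\lambda-\mu\lambda')(\omega_1\overline\omega_2-\overline\omega_1\omega_2)$ rather than its negative, though the end of your chain, $(\mu'\lambda-\mu\lambda')t_0^{-1}$, is the correct value, so the conclusion is unaffected. For (ii), the paper picks dual $\IZ$-bases of $\O{F}$ and $\mathfrak a$ adapted to the unimodular pairing, forms the real matrices $U$ and $\Lambda$ of embeddings, uses $|\!\det U|=|\Delta_F|^{1/2}$ and $|\!\det\Lambda|=\inorm{\mathfrak a}|\Delta_F|^{1/2}$ to obtain the norm identity $\inorm{u\mathfrak a}=\inorm{\diff{F/\IQ}^{-1}}$, and only then combines this with the containment $u\mathfrak a\subset\diff{F/\IQ}^{-1}$ coming from integrality of the pairing. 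You instead stay entirely over $\IZ$: the Gram matrix of $E$ on the basis $\Phi(\omega_1 e_i),\Phi(\omega_2 f_j)$ of $\Pi$ is block-antidiagonal by (i), its determinant is $(\det T)^2$ with $T=(\tr{u e_i f_j}{F/\IQ})_{i,j}$, so unimodularity of the principal polarisation gives $\det T=\pm1$, and the classical identification of ${\rm Hom}_{\IZ}(\O{F},\IZ)$ with $\diff{F/\IQ}^{-1}$ via the trace form turns this into the equality $u\mathfrak a=\diff{F/\IQ}^{-1}$ in one stroke, with no norm comparison or discriminant bookkeeping. Your argument also makes explicit the equality $\Phi(\mathfrak M)=\Pi$ (via the rank-one $K$-module structure of $\Pi\otimes_\IZ\IQ$), which the paper uses implicitly when it asserts the existence of a symplectic basis of the stated form; spelling this out is a genuine improvement in rigour. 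What the paper's detour buys, on the other hand, is that the determinant identities for $U$ and $\Lambda$ are exactly the ones reused in Lemma \ref{lem:easy}, so its computation is not wasted in the sequel.
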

\begin{proof}As $\overline{t_0} = -t_0$ we find $\overline{u}=u$ and thus $u\in F$. 
We find
$\tr{t\mu\mu'\omega_1\overline\omega_1}{K/\IQ} = 0$
as $\mu\mu'\omega_1\overline\omega_1\in F$ and similarly
$\tr{t\lambda\lambda'\omega_2\overline\omega_2}{K/\IQ}=0$.
Therefore by (\ref{eq:Etrace}),
  \begin{alignat*}1
   E(\Phi(\mu\omega_1+\lambda\omega_2),\Phi(\mu'\omega_1+\lambda'\omega_2))
&= \tr{t(\mu\overline\omega_1 + \lambda\overline\omega_2)
(\mu'\omega_1 + \lambda'\omega_2)}{K/\IQ}\\
&=
\tr{t(\lambda\mu'\omega_1\overline\omega_2+\mu\lambda'\overline\omega_1\omega_2
  )}{K/\IQ}\\
&=\sum_{j=1}^g \varphi_j(t(
\lambda\mu'\omega_1\overline\omega_2 + 
\mu\lambda'\overline \omega_1\omega_2 
-\lambda\mu'\overline \omega_1\omega_2 
-\mu\lambda' \omega_1\overline\omega_2))\\
& = \tr{u(\lambda\mu'-\mu\lambda')}{F/\IQ}
  \end{alignat*}
where the final equality used $t=u t_0$ and (\ref{eq:definet0}). 
Part (i) follows.

The symplectic form $E$ has determinant $1$ as it corresponds to a
principal polarisation of $A$. So there exist a $\IZ$-basis
$(\mu_1,\ldots,\mu_g)$ of $\O{F}$ and a $\IZ$-basis of $(\lambda_1,\ldots,\lambda_g)$
of $\mathfrak a$ such that 
 $E(\Phi(\lambda_l\omega_2),\Phi(\mu_m\omega_1))=0$ except if $l=m$ when the value is
$1$. 
Part (i) yields 
$E(\Phi(\lambda_l\omega_2),\Phi(\mu_m \omega_1)) = 
\tr{u \mu_m \lambda_l}{F/\IQ}$. 
So if we arrange the $g$ column vectors $\Phi(\mu_m)$ to a square
matrix $U$
and do the same with $\Phi(\lambda_l)$ to obtain $\Lambda$, then 
$  \trans{U} \diag{\varphi_1(u),\ldots,\varphi_g(u)} 
\Lambda$ is the $g\times g$ unit matrix. Thus
$\det(U)\norm{u}{F/\IQ} \det(\Lambda) =1$. 
Now $|\!\det\Lambda| = \inorm{\mathfrak a}|\Delta_F|^{1/2}$
and $|\!\det U| = |\Delta_F|^{1/2}$ and thus
$|\norm{u}{F/\IQ}|\inorm{\mathfrak{a}} = |\Delta_F|^{-1}$. 
We conclude $\inorm{ u \mathfrak{a}} = \inorm{\diff{F/\IQ}^{-1}}$. 

If $\lambda\in\mathfrak a$ is arbitrary, then 
$\tr{u\lambda}{F/\IQ} = E(\Phi(\lambda\omega_2,\omega_1))$ by part
(i). This is an integer and so $u\mathfrak{a}\subset
\diff{F/\IQ}^{-1}$. 
But we proved above that these two fractional ideals have equal norm,
thus part (ii) follows.
\end{proof}

Part (ii) of the lemma above establishes our claim that $\mathfrak a$
and $\diff{F/\IQ}^{-1}$ are in the same ideal class. 
So we can take $\mathfrak{a} = \diff{F/\IQ}^{-1}$ to start out
with. Part  (ii) of the previous lemma implies $u\in\O{F}^\times$. 
We now replace $\omega_1$ and $\omega_2$ with 
$\omega_1$ and $u^{-1}\omega_2$, respectively.
With these new periods, 
\begin{equation}
\label{eq:newM}
  \mathfrak{M} = \omega_1\O{F} + \omega_2\diff{F/\IQ}^{-1}
\end{equation}
remains true but now
\begin{equation}
\label{eq:tom1om2}
t=(\omega_1\overline{\omega_2}-\overline{\omega_1}\omega_2)^{-1}.
\end{equation}
Moreover, the formula in Lemma \ref{lem:OFmodule}(i) simplifies to 
\begin{equation}
\label{eq:newE}
     E(\Phi(\mu\omega_1+\lambda\omega_2),\Phi(\mu'\omega_1+\lambda'\omega_2))
   =   \tr{\mu'\lambda-\mu\lambda'}{F/\IQ}.
\end{equation}

Next, let us consider $\tau=\omega_2/\omega_1$. 
We compute
$\varphi_l(t)^{-1} = |\varphi_l(\omega_1)|^2 (\overline {\varphi_l(\tau)} -\varphi_l(\tau))
= -2i|\varphi_l(\omega_1)|^2 \imag{\varphi_l(\tau)}$ for all $1\le l
\le g$.
Our $t$ satisfies $\real{\varphi_l(t)}=0$ and
$\imag{\varphi_l(t)}>0$. 
We conclude $\imag{\varphi_l(\tau)} > 0$ 
for all $1\le l\le g$. 
In particular, $\Phi(\tau) \in \IH^g$. 

Recall that the group
$\widehat\Gamma(\diff{F/\IQ}^{-1})$, defined in (\ref{eq:definehatGamma}), acts on $\IH^g$
and that we described a fundamental set for this action 
 in  Section \ref{sec:hmv}.
In the proposition below we use this group 
to transform $\omega_2/\omega_1$ 
to the said fundamental set. 

Let $V \subseteq\O{F}^{\times,+}$ be a set of representatives of
 $\O{F}^{\times,+}/(\O{F}^\times)^2$.
Note that $V$ is finite.

\begin{prop}
\label{prop:preparehmv}
  There exist $\omega_1,\omega_2\in K^\times$ 
with (\ref{eq:newM}), $\Phi(\omega_2/\omega_1)\in \hfd(\diff{F/\IQ}^{-1})$, and
such that there is $v\in V$ with 
\begin{equation}
\label{eq:easyform}
E(\Phi(\mu\omega_1+\lambda\omega_2),\Phi(\mu'\omega_1+\lambda'\omega_2))=\tr{v(\lambda\mu'-\lambda'\mu)}{F/\IQ}
\end{equation}
for all $\mu,\mu',\lambda,\lambda'\in F$. 
\end{prop}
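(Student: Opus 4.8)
The plan is to take the periods $\omega_1,\omega_2$ produced just before the statement --- so that (\ref{eq:newM}), (\ref{eq:tom1om2}) and (\ref{eq:newE}) hold and $\Phi(\tau)\in\IH^g$, where $\tau=\omega_2/\omega_1$ --- and to move $\Phi(\tau)$ into the fundamental set $\hfd(\diff{F/\IQ}^{-1})$ by the $\widehat\Gamma(\diff{F/\IQ}^{-1})$-action, keeping track of the effect on the symplectic form (\ref{eq:newE}). Since $\hfd(\diff{F/\IQ}^{-1})$ meets every $\widehat\Gamma(\diff{F/\IQ}^{-1})$-orbit on $\IH^g$, we may fix a matrix $\gamma=\left(\begin{array}{cc}a&b\\c&d\end{array}\right)\in\GLp{}{\O{F}\oplus\diff{F/\IQ}^{-1}}$ whose image in $\widehat\Gamma(\diff{F/\IQ}^{-1})$ carries $\Phi(\tau)$ into $\hfd(\diff{F/\IQ}^{-1})$. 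Set $\omega'_2=a\omega_2+b\omega_1$ and $\omega'_1=c\omega_2+d\omega_1$; using $\det\gamma=ad-bc\in\O{F}^\times$ and $\Phi(\tau)\in\IH^g$ one checks $a\tau+b\ne0$ and $c\tau+d\ne0$, so $\omega'_1,\omega'_2\in K^\times$. Applying each $\varphi_l$ shows $\Phi(\omega'_2/\omega'_1)=\Phi\bigl((a\tau+b)/(c\tau+d)\bigr)$ is the image of $\Phi(\tau)$ under $\gamma$, hence lies in $\hfd(\diff{F/\IQ}^{-1})$.

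Next I would verify that the new periods still generate $\mathfrak{M}$ in the required shape, that is $\mathfrak{M}=\omega'_1\O{F}+\omega'_2\diff{F/\IQ}^{-1}$. Since $a,d\in\O{F}$, $b\in\diff{F/\IQ}$ and $c\in\diff{F/\IQ}^{-1}$, one has $b\diff{F/\IQ}^{-1}\subseteq\O{F}$ and $c\O{F}\subseteq\diff{F/\IQ}^{-1}$, whence $\omega'_1\O{F}+\omega'_2\diff{F/\IQ}^{-1}\subseteq\omega_1\O{F}+\omega_2\diff{F/\IQ}^{-1}=\mathfrak{M}$; as $ad-bc\in\O{F}^{\times,+}$ the inverse $\gamma^{-1}$ again lies in $\GLp{}{\O{F}\oplus\diff{F/\IQ}^{-1}}$, and running the same argument for $\gamma^{-1}$ gives the reverse inclusion. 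So (\ref{eq:newM}) persists for $\omega'_1,\omega'_2$.

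The heart of the matter, and the step I expect to be the main (though routine) computational obstacle, is the transformation of the symplectic form. Writing $\mu\omega'_1+\lambda\omega'_2=(\mu d+\lambda b)\omega_1+(\mu c+\lambda a)\omega_2$ and similarly with primes, plugging into (\ref{eq:newE}) and expanding, the terms involving $dc$ and $ab$ cancel while those involving $ad$ and $bc$ combine, yielding
\begin{equation*}
E\bigl(\Phi(\mu\omega'_1+\lambda\omega'_2),\Phi(\mu'\omega'_1+\lambda'\omega'_2)\bigr)=\tr{(ad-bc)(\lambda\mu'-\lambda'\mu)}{F/\IQ}
\end{equation*}
for all $\mu,\mu',\lambda,\lambda'\in F$.

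Finally I would normalise the totally positive unit $ad-bc$ modulo squares. Let $v\in V$ represent the class of $ad-bc$ in $\O{F}^{\times,+}/(\O{F}^\times)^2$ and pick $w\in\O{F}^\times$ with $w^2(ad-bc)=v$. Replacing $\omega'_1,\omega'_2$ by $\omega''_1=w\omega'_1$ and $\omega''_2=w\omega'_2$ leaves the ratio $\omega''_2/\omega''_1=\omega'_2/\omega'_1$ --- hence the point $\Phi(\omega''_2/\omega''_1)\in\hfd(\diff{F/\IQ}^{-1})$ --- unchanged, leaves $\mathfrak{M}=\omega''_1\O{F}+\omega''_2\diff{F/\IQ}^{-1}$ unchanged since $w\in\O{F}^\times$ and $\mathfrak{M}$ is an $\O{F}$-module, and, via $\mu\omega''_1+\lambda\omega''_2=(w\mu)\omega'_1+(w\lambda)\omega'_2$, turns the coefficient $ad-bc$ in the displayed identity into $w^2(ad-bc)=v$. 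This gives precisely (\ref{eq:easyform}), and renaming $\omega''_1,\omega''_2$ back to $\omega_1,\omega_2$ finishes the proof.
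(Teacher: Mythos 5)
Your proposal is correct and follows essentially the same route as the paper: apply Proposition \ref{prop:fundamentalset} to get $\gamma\in\GLp{}{\O{F}\oplus\diff{F/\IQ}^{-1}}$ moving $\Phi(\tau)$ into the fundamental set, transform the periods by $\gamma$, check that (\ref{eq:newM}) persists, and compute the new symplectic form from (\ref{eq:newE}), whose coefficient is $\det\gamma$. The only (immaterial) difference is that the paper normalises $\det\gamma$ into $V$ by premultiplying $\gamma$ with a unit scalar matrix before changing periods, whereas you rescale the new periods by a unit afterwards; the two manoeuvres are equivalent.
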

\begin{proof}
According to Proposition \ref{prop:fundamentalset}
there is 
\begin{equation*}
  \gamma=\left(
  \begin{array}{cc}
    a & b \\ c & d
  \end{array}\right)
  \in
\GLp{}{\O{F}\oplus{\diff{F/\IQ}^{-1}}}
\end{equation*}
with $\gamma\Phi(\tau) \in \hfd(\diff{F/\IQ}^{-1})$.
Multiplying $\gamma$ by a scalar matrix with diagonal entry
$u\in \O{F}^\times$  does not affect $\gamma\Phi(\tau)$ and replaces
$\det \gamma$ by $u^2\det\gamma$. So we may
assume that $\det\gamma\in V$. 
We set $\omega'_1=d\omega_1 + c\omega_2$ and
$\omega'_2=b\omega_1+a\omega_2$, and find, using the definition
(\ref{eq:defGLp}),  that
(\ref{eq:newM}) again remains true.
Using
 (\ref{eq:newE}) we
obtain
\begin{equation*}
  E(\Phi(\mu\omega'_1+\lambda\omega'_2,\mu'\omega'_1+\lambda'\omega'_2))
=\tr{(\det \gamma)(\mu'\lambda-\mu\lambda')}{F/\IQ} 
\end{equation*}
for all $\mu,\mu',\lambda,\lambda'\in F$. 
Part (iii) follows on replacing $\omega_{1}$ and $\omega_{2}$ by $\omega'_{1}$ and $\omega'_{2}$, respectively.
\end{proof}

Let $(\mu_1,\ldots,\mu_g)$ be any
 $\IZ$-basis of $\O{F}$. We may find 
 a $\IZ$-basis  $(\lambda_1,\ldots,\lambda_g)$
of $\diff{F/\IQ}^{-1}$ 
such that $(\Phi(\mu_1)\omega_1,\ldots,\Phi(\lambda_g)\omega_2)$ is a
symplectic basis for $E$. 
We note that the $\lambda_l$ may depend on the symplectic form $E$ and thus
on $\mathfrak{M}$ whereas the $\mu_m$ depended only on $F$.
Let us see how to retrieve the $\lambda_1,\ldots,\lambda_g$ from the other
data.
We define $\Lambda,U\in\mat{\IR}{g}$ as the square matrices with
columns  $\Phi(\lambda_1),\ldots,\Phi(\lambda_g)$ and $\Phi(\mu_1),\ldots,\Phi(\mu_g)$, respectively. 
Relation (\ref{eq:easyform}) yields
\begin{equation}
\tr{v\lambda_l\mu_m}{F/\IQ}
=\left\{
\begin{array}{ll}
1 & \text{for $l=m$}, \\
0 & \text{for $l\not=m$.}
\end{array}\right.
\end{equation}
So $\trans{\Lambda}\diag{\varphi_1(v),\ldots,\varphi_g(v)}U$ is the $g\times g$ unit matrix.
The period matrix with respect to the symplectic basis
is 
\begin{alignat}1
  \label{eq:periodmatrix}
Z&=U^{-1} \diag{\varphi_1(\tau),\ldots,\varphi_g(\tau)}\Lambda=U^{-1}\diag{\varphi_1(v\tau),\ldots,\varphi_g(v\tau)}\trans{U^{-1}}\\
\nonumber
&=\trans{\Lambda} \diag{\varphi_1(v\tau),\ldots,\varphi_g(v\tau)}\Lambda.
\end{alignat}
It is well-known that $Z$ lies in Siegel's upper half-space $\IH_g$.  

\begin{rem}
\label{rem:periodmatg2}
  Let us assume $g=2$ and $\O{F}^{\times,+} = (\O{F}^\times)^2$. 
So $F$ is a real quadratic field of discriminant $\Delta > 0$, say,
and
we may take $V$ as above Proposition \ref{prop:preparehmv} to contain only $1$.
Thus $\O{F} =\IZ +\theta\IZ$ with $\theta = (\Delta +
\sqrt{\Delta})/2$. 
The conjugate of $\theta$ over $\IQ$ is
$\theta'=(\Delta-\sqrt{\Delta})/2$ and we consider $\theta,\theta'$ as real numbers. So
\begin{equation*}
  \left(
  \begin{array}{cc}
    \theta & 1 \\ \theta' & 1
  \end{array}\right)
\end{equation*}
becomes an admissible choice for $U$ as above. 
Say $\omega_1,\omega_2$ are as
in Proposition \ref{prop:preparehmv} with $\tau_1 = \varphi_1(\omega_2/\omega_1)$ and $ \tau_2 = \varphi_2(\omega_2/\omega_1)\in
\IC$. A brief calculation using $\det U = \theta -\theta'=  \sqrt{\Delta}$
  yields the period matrix
\begin{equation*}
Z = U^{-1}\left(
\begin{array}{cc}
  \tau_1 & \\ & \tau_2
\end{array}\right)\trans{U^{-1}}
= \frac{1}{\Delta}
\left(
\begin{array}{cc}
 \tau_1+\tau_2 & - \tau_1\theta'-\tau_2\theta\\
- \tau_1\theta'-\tau_2\theta &\tau_1\theta'^2 + \tau_2\theta^2
\end{array}\right).
\end{equation*}
\end{rem}

For the remainder of this section we suppose that
$\cO = \cO_K$ and thus that $\mathfrak{A}=\mathfrak{M}$
is a fractional ideal of $\cO_K$. 

Next we will bound how close the point represented by $Z$ lies to the
boundary of the 
coarse moduli space of principally polarised abelian varieties of
dimension $g$. 
We will do the same for the point in
$\widehat\Gamma(\diff{F/\IQ}^{-1})\backslash \IH^g$ 
represented by $\tau$ from Proposition \ref{prop:preparehmv}.

We
 define the norm of any ideal
class $[\mathfrak{A}]  \in\cg{K}$ as
the least norm of an ideal representing the said class, \textit{i.e.}
\begin{equation*}
\inorm{[\mathfrak{A}]} 
=\min\{\inorm{\mathfrak B};\,\, \mathfrak B\text{ is an ideal of $\O{K}$ in } [\mathfrak{A}] \}.
\end{equation*}

Recall that $\sfd_g$ denotes Siegel's fundamental domain, see Section
\ref{sec:notation}.

\begin{lem}
\label{lem:easy}
Let $\omega_{1}$ and $\omega_{2}$ be as in Proposition \ref{prop:preparehmv}. 
Then 
\begin{equation*}
2^g \norm{\omega_1}{K/\IQ}
\prod_{l=1}^g\imag{\varphi_l(\omega_2/\omega_1)}
= \inorm{\mathfrak{A}} |\Delta_K|^{1/2} 
=|\norm{t}{K/\IQ}|^{-1/2}. 
\end{equation*}
\end{lem}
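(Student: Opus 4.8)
The plan is to prove the two equalities separately, with $\inorm{\mathfrak{A}}|\Delta_K|^{1/2}$ serving as the bridge. I would first treat the middle equality $|\norm{t}{K/\IQ}|^{-1/2}=\inorm{\mathfrak{A}}|\Delta_K|^{1/2}$, which involves only $t$ and the fractional ideal $\mathfrak{A}=\mathfrak{M}$ of $\O{K}$. Fix a $\IZ$-basis $x_1,\dots,x_{2g}$ of $\mathfrak{A}$. Since $A$ is principally polarised, the alternating form $E$ is unimodular on $\mathfrak{M}$ via $\Phi$ --- exactly as used in the proof of Lemma \ref{lem:OFmodule} --- so the Gram matrix $M=\bigl(E(\Phi(x_i),\Phi(x_j))\bigr)_{1\le i,j\le 2g}$ has $\det M=1$. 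By (\ref{eq:Etrace}), and because the automorphism $\overline{\cdot}$ of $K$ satisfies $\sigma(\overline{x})=\overline{\sigma(x)}$ for every embedding $\sigma\colon K\hookrightarrow\IC$, we have $M_{ij}=\tr{t\,\overline{x_i}\,x_j}{K/\IQ}=\sum_{\sigma}\sigma(t)\,\overline{\sigma(x_i)}\,\sigma(x_j)$, where $\sigma$ runs over the $2g$ embeddings --- namely $\varphi_1,\dots,\varphi_g$ and their conjugates $\overline{\varphi_1},\dots,\overline{\varphi_g}$. Writing $P=\bigl(\sigma(x_i)\bigr)_{\sigma,i}$ and $D=\diag{(\sigma(t))_{\sigma}}$, this reads $M=\trans{\overline{P}}\,D\,P$, whence $\det M=|\det P|^{2}\prod_{\sigma}\sigma(t)=|\det P|^{2}\,\norm{t}{K/\IQ}$. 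Now $(\det P)^{2}=\det(\trans{P}P)=\det\bigl(\tr{x_ix_j}{K/\IQ}\bigr)$ is the discriminant of a $\IZ$-basis of $\mathfrak{A}$, hence equals $\inorm{\mathfrak{A}}^{2}\Delta_K$; being real, this forces $|\det P|^{2}=\inorm{\mathfrak{A}}^{2}|\Delta_K|$. Together with $\det M=1$ and $\norm{t}{K/\IQ}>0$ this gives the middle equality.

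For the outer equality I would confront (\ref{eq:defE}) with (\ref{eq:easyform}). Put $\tau=\omega_2/\omega_1$, so $\Phi(\tau)\in\hfd(\diff{F/\IQ}^{-1})\subset\IH^{g}$ and $\imag{\varphi_l(\tau)}>0$; recall $\real{\varphi_l(t)}=0$, hence $\varphi_l(t)=i\,\imag{\varphi_l(t)}$ and $|\varphi_l(t)|=\imag{\varphi_l(t)}$. Fix $\mu,\mu',\lambda,\lambda'\in F$, set $\alpha=\mu\omega_1+\lambda\omega_2$ and $\beta=\mu'\omega_1+\lambda'\omega_2$, and expand $E(\Phi(\alpha),\Phi(\beta))$ coordinatewise via (\ref{eq:defE}), using that $\varphi_l$ restricts to a real embedding of $F$. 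The ``diagonal'' contributions, carrying the factors $|\varphi_l(\omega_1)|^{2}$ and $|\varphi_l(\omega_2)|^{2}$, are real and so are killed by $\real{\varphi_l(t)}=0$; only the cross terms remain, and a short computation gives
\begin{equation*}
E(\Phi(\alpha),\Phi(\beta))=\sum_{l=1}^{g}2\,\imag{\varphi_l(t)}\,|\varphi_l(\omega_1)|^{2}\,\imag{\varphi_l(\tau)}\;\varphi_l(\lambda\mu'-\mu\lambda').
\end{equation*}
On the other hand (\ref{eq:easyform}) gives $E(\Phi(\alpha),\Phi(\beta))=\tr{v(\lambda\mu'-\lambda'\mu)}{F/\IQ}=\sum_{l}\varphi_l(v)\,\varphi_l(\lambda\mu'-\mu\lambda')$ with $v\in V$ as in Proposition \ref{prop:preparehmv}. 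Letting $\lambda\mu'-\mu\lambda'$ range over all of $F$ (take $\lambda=1$, $\mu=\lambda'=0$) and using the $\IR$-linear independence of $\varphi_1|_F,\dots,\varphi_g|_F$, I get $2\,|\varphi_l(\omega_1)|^{2}\,\imag{\varphi_l(\tau)}=\varphi_l(v)\,|\varphi_l(t)|^{-1}$ for each $l$. Multiplying over $l$ and using $\prod_l\varphi_l(v)=\norm{v}{F/\IQ}=1$ (as $v$ is a totally positive unit), $\prod_l|\varphi_l(\omega_1)|^{2}=\norm{\omega_1}{K/\IQ}$ and $\prod_l|\varphi_l(t)|=\norm{t}{K/\IQ}^{1/2}=|\norm{t}{K/\IQ}|^{1/2}$, this becomes $2^{g}\,\norm{\omega_1}{K/\IQ}\,\prod_{l=1}^{g}\imag{\varphi_l(\omega_2/\omega_1)}=|\norm{t}{K/\IQ}|^{-1/2}$, completing the chain.

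The main (and essentially only) delicate point is the bookkeeping with the three flavours of conjugation --- the automorphism $\overline{\cdot}$ of $K$, complex conjugation on $\IC$, and the involution $\varphi_j\leftrightarrow\overline{\varphi_j}$ on embeddings --- together with the purely imaginary nature of $\varphi_l(t)$; everything else is elementary linear algebra plus the classical formula $\disc(\mathfrak{A})=\inorm{\mathfrak{A}}^{2}\Delta_K$, and the factor $v$ from (\ref{eq:easyform}) disappears harmlessly after taking norms since $v$ is totally positive of norm $1$. As an alternative for the second step, one can instead start from the identity $\varphi_l(t)^{-1}=-2i\,|\varphi_l(\omega_1)|^{2}\,\imag{\varphi_l(\tau)}$ established just before (\ref{eq:tom1om2}) for the normalisation used there, and track the effect of the transformation $\omega_i\mapsto\omega_i'$ carried out in the proof of Proposition \ref{prop:preparehmv}: it multiplies the right-hand side by $\varphi_l(v)^{-1}$, and one finishes as above.
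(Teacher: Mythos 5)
Your proof is correct, but it reaches the first equality by a genuinely different route than the paper, while your middle equality coincides with the paper's argument. For $\inorm{\mathfrak{A}}|\Delta_K|^{1/2}=|\norm{t}{K/\IQ}|^{-1/2}$ you do exactly what the paper does: compute the Gram determinant of $E$ on a $\IZ$-basis of $\mathfrak{A}$ via (\ref{eq:Etrace}), use principality to make it $1$, and identify $|\det P|^2$ with $\inorm{\mathfrak{A}}^2|\Delta_K|$ (the paper writes the same computation out with real and imaginary parts instead of the factorisation $M=\trans{\overline{P}}DP$, but it is the same argument). For the outer equality the paper never touches $t$: it forms the $2g\times 2g$ matrix whose columns are $\Phi\times\overline\Phi$ of the $\IZ$-basis $(\mu_m\omega_1,\lambda_l\omega_2)$ of $\mathfrak{A}$, notes its determinant has modulus $\inorm{\mathfrak{A}}|\Delta_K|^{1/2}$, and uses $|\det U|\,|\det\Lambda|=1$ to extract $2^g\norm{\omega_1}{K/\IQ}\prod_l\imag{\varphi_l(\tau)}$ directly. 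You instead compare (\ref{eq:defE}) with (\ref{eq:easyform}) coordinatewise, obtaining the pointwise identity $2\imag{\varphi_l(t)}\,|\varphi_l(\omega_1)|^2\,\imag{\varphi_l(\tau)}=\varphi_l(v)$ for each $l$ (a $v$-twisted form of (\ref{eq:tom1om2})), and then take the product over $l$, using that the totally positive unit $v$ has $\norm{v}{F/\IQ}=1$; this links the left-hand side of the lemma to $|\norm{t}{K/\IQ}|^{-1/2}$ rather than to $\inorm{\mathfrak{A}}|\Delta_K|^{1/2}$, and the remaining equality follows by combining with the Gram computation. Both routes are sound; yours even yields the stronger archimedean-place-by-place identity, whereas the paper's covolume argument is independent of $t$ and of the normalisation $v$. (Two cosmetic points only: the "diagonal" terms vanish either trivially in the difference $\overline{z_l}w_l-z_l\overline{w_l}$ or, as you organise it, via $2\real{\varphi_l(t)\overline{z_l}w_l}$ and $\real{\varphi_l(t)}=0$, so your justification is fine; and the identity you cite in your alternative occurs just after, not before, (\ref{eq:tom1om2}).)
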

\begin{proof}
We let $U,\Lambda\in \mat{\IR}{g}$ denote matrices
as in (\ref{eq:periodmatrix}). 
Let $\Omega_j = \diag{\varphi_1(\omega_j),\ldots,\varphi_g(\omega_j)}$
for $1\le j\le 2$. 
Then the columns of 
\begin{equation*}
\left(
\begin{array}{cc}
\Omega_1 & \Omega_2 \\ \overline{\Omega_1}& \overline{\Omega_2}
\end{array}
\right)
\left(
\begin{array}{cc}
U & \\ & \Lambda
\end{array}
\right)
\end{equation*}
constitute a $\IZ$-basis of $\Phi\times\overline\Phi(\mathfrak{A})\subseteq\IC^{2g}$. The determinant
of this product has modulus
$\inorm{\mathfrak{A}}|\Delta_K|^{1/2} =  
|\det(\Omega_1\overline{\Omega_2}-\overline{\Omega_1}\Omega_2)||\!\det
U||\!\det \Lambda|$.
The first equality follows since 
$|\!\det U|= |\Delta_F|^{1/2}$ and $|\!\det \Lambda|
= \inorm{\diff{F/\IQ}^{-1}} |\Delta_F|^{1/2} = |\Delta_F|^{-1/2}$.

To prove the second equality let $(\alpha_1,\ldots,\alpha_{2g})$ be a $\IZ$-basis of $\mathfrak{A}$.
The determinant of the matrix 
$\left(E(\Phi(\alpha_l)  ,\Phi(\alpha_m))\right)_{1\le l,m\le 2g}$ equals the determinant of
the matrix with entries
\begin{alignat*}1
2 \sum_{j=1}^g  i\varphi_j(t)
\imag{\varphi_j(\overline{\alpha_l}{\alpha_m})}
&=
2\mathrm{Im}\Big(\sum_{j=1}^g  i\varphi_j(t)
\varphi_j(\overline{\alpha_l}{\alpha_m})\Big) \\
&=2\mathrm{Re}\Big(\sum_{j=1}^g  \varphi_j(t)
\varphi_j(\overline{\alpha_l}{\alpha_m})\Big)
\end{alignat*}
where we used $i \varphi_j(t)\in \IR$. We can rewrite
these entries as
$\sum_{j=1}^{2g}  \varphi_j(t)
\varphi_j(\overline{\alpha_l}{\alpha_m})$
on augmenting $\varphi_{g+j}=\overline{\varphi_j}$.
Thus we have
\begin{equation*}
\left|\det(E(\Phi(\alpha_l),\Phi(\alpha_m)))_{1\le l,m\le 2g}\right|
= \left(\prod_{j=1}^{2g} |\varphi_j(t)|\right)
|\det (\varphi_l(\alpha_m))_{1\le l,m\le 2g}|^2=
|\norm{t}{K/\IQ}|\inorm{\mathfrak{A}}^2|\Delta_K|.
\end{equation*} 
The absolute value on the left is $1$ as 
  the polarisation on $A$ is principal. 
Our claim follows after taking the square root and rearranging terms.
\end{proof}

For the next lemma
we fix representatives $\eta_m\in\IP^1(F)$
of the $\#\cg{F}$ cusps of $\widehat{\Gamma}(\diff{F/\IQ}^{-1})
\backslash \IH^g$ as in Section \ref{sec:hmv}.

\begin{lem}
\label{lem:rgoupperbound}
Let $Z$ be the period matrix  (\ref{eq:periodmatrix}), let $\omega_{1,2}$ be as in Proposition
\ref{prop:preparehmv}, and set $\tau=\omega_2/\omega_1$. 
\begin{enumerate}
\item[(i)]
There exists a constant $c=c(g)>0$ which depends only on $g$ with the
following property. If $\gamma \in \SP{2g}{\IZ}$ with $\gamma Z \in \sfd_g$,
then 
\begin{equation*}
 \tr{\imag{\gamma Z}}{} \le c
 \left(\frac{|\Delta_K|^{1/2}}{\inorm{[\mathfrak{A}^{-1}]}} \right)^{1/g}.
\end{equation*}
\item[(ii)]
There exists a constant $c>0$ which depends only on $F$ and the
$\eta_m$ such that
  \begin{equation*}
    \mu(\eta_m,\Phi(\tau)) \le
c \frac{|\Delta_K|^{1/2}}{\inorm{[\mathfrak{A}^{-1}]}}
  \end{equation*}
for all $m$.
\end{enumerate}
\end{lem}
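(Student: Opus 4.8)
The plan is to deduce both parts from Lemma \ref{lem:easy} and Proposition \ref{prop:fundamentalset}, treating part (ii) as the computational heart and part (i) as a consequence of it together with \eqref{eq:periodmatrix}.

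\textbf{Part (ii).} I would first normalise the cusp representatives so that $\alpha_m\in\O{F}$ and $\beta_m\in\diff{F/\IQ}^{-1}$; then $\mathfrak b_m=\alpha_m\O{F}+\beta_m\diff{F/\IQ}$ is an integral ideal of $\O{F}$ with $\inorm{\mathfrak b_m}\ll_F 1$. Taking $\omega_1,\omega_2$ as in Proposition \ref{prop:preparehmv} and writing $\tau=\omega_2/\omega_1\in K$, one has $\varphi_l(\alpha_m)-\varphi_l(\beta_m)\varphi_l(\tau)=\varphi_l(\alpha_m-\beta_m\tau)$ and, using \eqref{eq:tom1om2}, $\imag{\varphi_l(\tau)}=\bigl(2|\varphi_l(t)|\,|\varphi_l(\omega_1)|^2\bigr)^{-1}$ for each $l$. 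Multiplying over $l$, and using $\prod_l|\varphi_l(\omega_1)|^2=\norm{\omega_1}{K/\IQ}$, $\prod_l|\varphi_l(\alpha_m-\beta_m\tau)|^2=\norm{\alpha_m-\beta_m\tau}{K/\IQ}$, $\prod_l|\varphi_l(t)|=|\norm{t}{K/\IQ}|^{1/2}$, and then Lemma \ref{lem:easy}, the factor $\norm{\omega_1}{K/\IQ}$ cancels and one gets
\begin{equation*}
\mu(\eta_m,\Phi(\tau))=\frac{\inorm{\mathfrak b_m}^2\,\inorm{\mathfrak A}\,|\Delta_K|^{1/2}}{2^g\,\norm{\alpha_m\omega_1-\beta_m\omega_2}{K/\IQ}}.
\end{equation*}
Now $\gamma_m=\alpha_m\omega_1-\beta_m\omega_2$ lies in $\mathfrak A=\omega_1\O{F}+\omega_2\diff{F/\IQ}^{-1}$ by the normalisation, and $\gamma_m\neq 0$ since otherwise $\tau=\alpha_m/\beta_m\in F$, contradicting $\imag{\varphi_l(\tau)}>0$. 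Hence $\gamma_m\mathfrak A^{-1}$ is an integral ideal of $\O{K}$ in the class $[\mathfrak A^{-1}]$, so $\norm{\gamma_m}{K/\IQ}=\inorm{\mathfrak A}\inorm{\gamma_m\mathfrak A^{-1}}\geq\inorm{\mathfrak A}\inorm{[\mathfrak A^{-1}]}$. Substituting yields $\mu(\eta_m,\Phi(\tau))\leq 2^{-g}\inorm{\mathfrak b_m}^2\,|\Delta_K|^{1/2}/\inorm{[\mathfrak A^{-1}]}$, which is part (ii) because $\inorm{\mathfrak b_m}\ll_F 1$.

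\textbf{Part (i).} I would exploit \eqref{eq:periodmatrix}, which gives $\imag Z=\trans{\Lambda}\,\diag{\varphi_1(v)\imag{\varphi_1(\tau)},\ldots,\varphi_g(v)\imag{\varphi_g(\tau)}}\,\Lambda$, where $\trans{\Lambda}\diag{\varphi_1(v),\ldots,\varphi_g(v)}U$ is the identity; since $U$ is a fixed period matrix of $\O{F}$ and $v$ ranges over the finite set $V$, the matrix $\Lambda$ is bounded in terms of $F$ alone, so every eigenvalue of $\imag Z$ is $\ll_F\max_l\imag{\varphi_l(\tau)}$. Because $\Phi(\tau)\in\hfd(\diff{F/\IQ}^{-1})$, Proposition \ref{prop:fundamentalset} gives $\imag{\varphi_l(\tau)}\ll_F\bigl(\max_{1\le m\le h}\mu(\eta_m,\Phi(\tau))\bigr)^{1/g}$ for each $l$, and part (ii) bounds the right-hand side by $\ll_F\bigl(|\Delta_K|^{1/2}/\inorm{[\mathfrak A^{-1}]}\bigr)^{1/g}$, whence $\Tr\imag Z\ll_F\bigl(|\Delta_K|^{1/2}/\inorm{[\mathfrak A^{-1}]}\bigr)^{1/g}$. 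It then remains to pass from $Z$ to the Siegel-reduced $\gamma Z\in\sfd_g$: the embedding $\widehat\Gamma(\diff{F/\IQ}^{-1})\hookrightarrow\SP{2g}{\IZ}$ underlying \eqref{eq:periodmatrix} is equivariant, and the image of $\hfd(\diff{F/\IQ}^{-1})$ under $\tau\mapsto Z$ meets only finitely many $\SP{2g}{\IZ}$-translates of $\sfd_g$, so $\gamma$ may be taken in a fixed finite set and $\Tr\imag{\gamma Z}\ll\Tr\imag Z$.

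The hardest part will be this last transfer: a boundedness statement comparing the reduction theory of the Hilbert modular group with Siegel's for $\SP{2g}{\IZ}$ (equivalently, that a Siegel set for the former lies in a finite union of Siegel sets for the latter). A merely qualitative argument would give a constant depending on $F$ in part (i) as well; to obtain the stated dependence on $g$ only one must either track the $F$-dependence of $\Lambda$ and of the chosen $\eta_m$ and absorb it, or appeal to a sharp comparison of the two fundamental sets. Since $F$ is fixed throughout the applications, this refinement is not essential for what follows.
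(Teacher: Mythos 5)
Your part (ii) is correct and is essentially the paper's own argument: the same closed formula $\mu(\eta_m,\Phi(\tau))=2^{-g}\inorm{\mathfrak b_m}^2\inorm{\mathfrak A}|\Delta_K|^{1/2}/\norm{\alpha_m\omega_1-\beta_m\omega_2}{K/\IQ}$ via Lemma \ref{lem:easy}, followed by writing $(\alpha_m\omega_1-\beta_m\omega_2)\O{K}=\mathfrak A\mathfrak B$ with $\mathfrak B$ integral in the class $[\mathfrak A^{-1}]$.

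Part (i), however, has a genuine gap at the final transfer step. You bound $\Tr\imag{Z}$ (for the unreduced matrix) and then claim that $\gamma$ can be taken in a fixed finite set and that $\Tr\imag{\gamma Z}\ll\Tr\imag{Z}$. Neither claim is available, and the second is false exactly in the regime the lemma must cover. If $\Phi(\tau)\in\hfd(\diff{F/\IQ}^{-1})$ lies near a \emph{finite} cusp $\eta_m=[\alpha_m:\beta_m]$ with $\beta_m\neq0$ (such cusps occur whenever $\#\cg{F}>1$), then each $\tau_l$ tends to the real number $\varphi_l(\alpha_m/\beta_m)$, so $\Tr\imag{Z}$ stays bounded or even tends to $0$, while the corresponding point of $\mathcal A_g$ approaches the boundary and $\Tr\imag{\gamma Z}$ for the Siegel-reduced representative tends to infinity (already visible for $g=1$: $z=i\epsilon$ reduces to $i/\epsilon$). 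So an upper bound on $\Tr\imag{Z}$ simply cannot yield an upper bound on $\Tr\imag{\gamma Z}$; note also that the finiteness of the set of reducing matrices is only proved in the paper (Lemma \ref{lem:Sigmaset}) under the hypothesis $\max_m\mu(\eta_m,\Phi(\tau))\le M$, i.e.\ away from the cusps, whereas Lemma \ref{lem:rgoupperbound}(i) is applied in Section \ref{sec:proofs} to every class $[\mathfrak A]\in H$, including those with $\inorm{[\mathfrak A^{-1}]}$ small relative to $|\Delta_K|^{1/2}$, which are precisely the conjugates near the cusps. The paper avoids any comparison between $Z$ and $\gamma Z$ by working intrinsically with the injectivity diameter $\rho$ of the polarised abelian variety: $\rho^2=2\sum_l|\varphi_l(t)||\varphi_l(\omega)|^2$ for a shortest period $\omega\in\mathfrak A\smallsetminus\{0\}$, which by AM--GM, Lemma \ref{lem:easy} and the ideal-class argument gives $\rho^{-2}\le(2g)^{-1}\bigl(|\Delta_K|^{1/2}/\inorm{[\mathfrak A^{-1}]}\bigr)^{1/g}$; since $\imag{\gamma Z}^{-1}$ represents the polarisation form in the reduced basis, testing on standard basis vectors and using Minkowski-reduction inequalities gives $\Tr\imag{\gamma Z}\le cg\,\rho^{-2}$ with $c=c(g)$. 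This is both where the exponent $1/g$ and the $F$-independent constant come from; your route, even if the transfer were repaired away from the cusps, would not cover the near-cusp classes that the main proof needs.
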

\begin{proof}
Let $\omega\in \mathfrak{A} \ssm \{0\}$ witness the injectivity
diameter 
\begin{equation*}
\rho
= \min\left\{H(\omega',\omega')^{1/2};\,\, \omega'\in\Pi\ssm\{0\} \right\}
> 0,
\end{equation*}
of  $A$ with its polarisation,
\textit{i.e.} 
$\rho^2 = H(\Phi(\omega),\Phi(\omega))$. 
Then
\begin{equation*}
\rho^2 = E(i \Phi(\omega),\Phi(\omega))
=2\sum_{l=1}^g |\varphi_l(t)||\varphi_l(\omega)|^2
\end{equation*}
by (\ref{eq:defE}). 
The inequality between the arithmetic mean and the geometric mean implies
\begin{equation*}
\rho^2 \ge 2g 
\left(\prod_{l=1}^{n} |\varphi_l(t)||\varphi_l(\omega)|^2\right)^{1/g}
=2g \left(|\norm{t}{K/\IQ}|^{1/2} |\norm{\omega}{K/\IQ}|\right)^{1/g}.
\end{equation*}
By the second equality in  
Lemma \ref{lem:easy} we deduce
\begin{equation*}
\rho^2 \ge 2g
\left(\frac{|\norm{\omega}{K/\IQ}|}{\inorm{\mathfrak{A}}|\Delta_K|^{1/2}}\right)^{1/g}.
\end{equation*}
Since $\omega\in \mathfrak{A}$ is non-zero there is an ideal
$\mathfrak B$ of $\O{K}$ with
$\mathfrak{A} \mathfrak B=\omega\O{K}$. Thus
$\rho^2 \ge 2g (\inorm{\mathfrak
  B}/|\Delta_K|^{1/2})^{1/g}$
since $\inorm{\mathfrak{A}}\inorm{\mathfrak B} =
|\norm{\omega}{K/\IQ}|$. 
So
\begin{equation}
\label{eq:rholb}
\rho^{-2} \le (2g)^{-1} \left(
\frac{|\Delta_K|^{1/2}}{\inorm{[\mathfrak{A}^{-1}]}}\right)^{1/g}.
\end{equation}
 since $\mathfrak B$ is  in the class
$[\mathfrak{A}^{-1}]$. 

Next we write $Z_{\rm red} = \gamma Z$ with $\gamma$ as in (i). As
$Z_{\rm red}$ lies in  Siegel's fundamental domain its imaginary part is
Minkowski reduced. 
The matrix $\imag{Z_{\rm red}}^{-1}$ represents the
hermitian form $H$ with respect to the standard basis on $\IC^g$. 
If $y'_1,\ldots,y'_g$ are the diagonal elements of $\imag{Z_{\rm red}}^{-1}$,
then we find $\rho^2 \le \min\{y'_1,\ldots,y'_g\}$ on
testing with standard basis vectors. If $y_1,\ldots,y_g$ are the
diagonal elements of $\imag{Z_{\rm red}}$, then properties of Minkowski reduced
matrices imply $y_l > 0$ and $y'_l\le c/y_l$ for all $1\le l\le g$
where $c>0$ is a constant that depends only
on $g$. 
So 
\begin{equation*}
 \rho^{-2} \ge  \max\{y_1,\ldots,y_g\}/c
\ge \tr{\imag{Y}}{}/(cg). 
\end{equation*}
We combine this inequality 
with (\ref{eq:rholb}) to deduce part (i). 

For the proof of (ii) we abbreviate $\eta=\eta_m$ and fix $\alpha\in\O{F}$ and
$\beta\in\diff{F/\IQ}^{-1}$
with $\eta=[\alpha:\beta]$.
Then
\begin{equation*}
  \mu(\eta,\Phi(\tau)) =
\inorm{\alpha\O{F}+\beta\diff{F/\IQ}}^2 |\norm{\omega_1}{K/\IQ}|
\prod_{l=1}^g \frac{\imag{\varphi_l(\tau)}}{|\varphi_l(\omega_1\alpha
  - \omega_2 \beta)|^2}
\end{equation*}
and so
\begin{equation*}
  \mu(\eta,\Phi(\tau)) =
  2^{-g}\inorm{\alpha\O{F}+\beta\diff{F/\IQ}}^2
\frac{\inorm{\mathfrak{A}}|\Delta_K|^{1/2}}{|\norm{\omega_1\alpha  - \omega_2 \beta}{K/\IQ}|}
\end{equation*}
by the first equality of Lemma \ref{lem:easy}.
We observe that $\omega_1\alpha-\omega_2\beta \in\mathfrak{A}$ is non-zero. As
above $(\omega_1\alpha-\omega_2\beta)=\mathfrak{A}\mathfrak B$, for
some ideal $\mathfrak B\in [\mathfrak{A}^{-1}]$.   We
conclude
\begin{equation*}
  \mu(\eta,\Phi(\tau))=2^{-g} \inorm{\alpha\O{F}+\beta\diff{F/\IQ}}^2
\frac{|\Delta_K|^{1/2}}{\inorm{\mathfrak B}}.
\end{equation*}
With this, part (ii) follows since $\inorm{\mathfrak B}\ge
\inorm{[\mathfrak{A}^{-1}]}$ and because $\alpha$ and $\beta$ depend
only on $F$ and the $\eta_m$. 
\end{proof}

The fact that the exponent
$1/g$ in (i) is strictly less than one for the jacobian of a genus $g=2$ curve
will prove crucial later on.

The period matrix $Z$ we constructed above may not lie
 in Siegel's fundamental domain $\sfd_g\subseteq\IH_g$ defined in Section
\ref{sec:notation}. 
We rectify this in the next lemma by using Minkowski and Siegel's reduction theory.

\begin{lem}
\label{lem:Sigmaset}
Let $\tau$ be as in Proposition \ref{prop:preparehmv}. 
For given $M > 0$ there is a finite set $\Sigma\subseteq\SP{2g}{\IZ}$
such that if 
$\max_{m} \mu(\eta_m,\Phi(\tau)) \le M$
then there exists
$\gamma \in \Sigma$ with $\gamma Z\in \sfd_g$. 
\end{lem}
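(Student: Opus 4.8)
The plan is to reduce the statement to the claim that the hypothesis $\max_m\mu(\eta_m,\Phi(\tau))\le M$ confines the period matrix $Z$ of (\ref{eq:periodmatrix}) to a compact subset $\mathcal{K}=\mathcal{K}(M)$ of $\IH_g$ depending only on $M$ together with the data fixed throughout this section (the field $F$ and its real embeddings $\varphi_1,\ldots,\varphi_g$, the cusp representatives $\eta_1,\ldots,\eta_h$, the chosen $\IZ$-basis $(\mu_1,\ldots,\mu_g)$ of $\O{F}$, and the finite set $V$), but \emph{not} on $K$, on $\mathfrak{A}$, or on the abelian variety. Granting this, I would invoke the standard fact from Siegel's reduction theory that the family of $\SP{2g}{\IZ}$-translates of $\sfd_g$ is locally finite, so that the compact set $\mathcal{K}$ meets only finitely many of them; hence $\Sigma:=\{\gamma\in\SP{2g}{\IZ};\,\gamma\mathcal{K}\cap\sfd_g\neq\emptyset\}$ is finite. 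Because $\sfd_g$ is a fundamental domain there is some $\gamma\in\SP{2g}{\IZ}$ with $\gamma Z\in\sfd_g$, and as $Z\in\mathcal{K}$ this $\gamma$ lies in $\Sigma$; this is the assertion, so it only remains to produce $\mathcal{K}$.

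To construct $\mathcal{K}$ I would first feed the hypothesis into Proposition \ref{prop:fundamentalset}, applied with $\mathfrak{a}=\diff{F/\IQ}^{-1}$. Since $\Phi(\tau)\in\hfd(\diff{F/\IQ}^{-1})$ and $\max_m\mu(\eta_m,\Phi(\tau))\le M$, that proposition gives $|\real{\varphi_l(\tau)}|\ll 1$ and, via (\ref{eq:prodimagtaulb}) together with $\max_m\mu(\eta_m,\Phi(\tau))\le M$, also $M^{-2/g}\ll\imag{\varphi_l(\tau)}\ll M^{1/g}$ for every $l$, with implied constants depending only on the fixed data. Equivalently, $\Phi(\tau)$ lies in a compact subset of $\IH^g$ depending only on $M$. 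Because $v$ ranges over the finite set $V$, the tuple $(\varphi_1(v)\varphi_1(\tau),\ldots,\varphi_g(v)\varphi_g(\tau))=(\varphi_1(v\tau),\ldots,\varphi_g(v\tau))$ then also stays in a compact subset of $\IH^g$ depending only on $M$.

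It remains to transport this through the formula $Z=\trans{\Lambda}\diag{\varphi_1(v\tau),\ldots,\varphi_g(v\tau)}\Lambda$ of (\ref{eq:periodmatrix}). The key point, and the one needing a little care since a priori the $\lambda_l$ depend on the symplectic form $E$ and hence on the abelian variety, is that $\Lambda$ is constrained to a \emph{finite} set. Indeed the identity $\trans{\Lambda}\diag{\varphi_1(v),\ldots,\varphi_g(v)}U=I_g$ recorded in the paragraph containing (\ref{eq:periodmatrix}) gives $\Lambda=\diag{\varphi_1(v)^{-1},\ldots,\varphi_g(v)^{-1}}\invtrans{U}$, where $U\in\mat{\IR}{g}$ is the \emph{fixed} matrix whose columns are $\Phi(\mu_1),\ldots,\Phi(\mu_g)$; so $\Lambda$ belongs to the finite set $\{\diag{\varphi_1(v)^{-1},\ldots,\varphi_g(v)^{-1}}\invtrans{U};\,v\in V\}$ of invertible real matrices. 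For each such $\Lambda$ the map $(w_1,\ldots,w_g)\mapsto\trans{\Lambda}\diag{w_1,\ldots,w_g}\Lambda$ is continuous from $\IH^g$ into $\IH_g$, so letting $v$ run over $V$ and $(\varphi_1(v\tau),\ldots,\varphi_g(v\tau))$ run over the compact set found above, the matrices $Z$ fill out a compact subset $\mathcal{K}=\mathcal{K}(M)$ of $\IH_g$ with the claimed dependence. This furnishes $\mathcal{K}$ and completes the argument.

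As for the main obstacle: there is no analytic subtlety, the entire content being the careful bookkeeping of dependencies. The one delicate step is pinning $\Lambda$ to a finite list — one must remember that $U$ was fixed once and for all in terms of $F$ alone and that the passage from an arbitrary totally positive unit determinant to a representative in $V$, performed in the proof of Proposition \ref{prop:preparehmv}, is exactly what forces $v$, and therefore $\Lambda$, into a finite set. I would also flag that the finiteness of $\Sigma$ relies on the standard local finiteness of the tessellation of $\IH_g$ by the $\SP{2g}{\IZ}$-translates of $\sfd_g$, a fact I would cite rather than reprove.
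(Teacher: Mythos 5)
Your argument is correct and follows essentially the same route as the paper's: the hypothesis combined with Proposition \ref{prop:fundamentalset} constrains $\Phi(\tau)$, the identity $\trans{\Lambda}\diag{\varphi_1(v),\ldots,\varphi_g(v)}U=I_g$ pins $\Lambda$ to a finite set depending only on $U$ and $V$, and hence $Z$ from (\ref{eq:periodmatrix}) is confined to a region of $\IH_g$ depending only on $M$ and the fixed data, after which a standard finiteness statement from Siegel reduction theory concludes. The only (harmless) divergence is the last step: you use the upper bound in (\ref{eq:prodimagtaulb}) to make that region compact and invoke local finiteness of the $\SP{2g}{\IZ}$-translates of $\sfd_g$, whereas the paper only needs $|\real{z_{lm}}|\ll 1$, $\det\imag{Z}\gg 1$ and $\imag{z_{11}}\gg 1$ to place $Z$ in a Siegel set $L_g(t)$ and then quotes Theorem 1 of Chapter I.3 of Klingen's book.
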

\begin{proof}
In this proof, all constants implicit in $\ll$ and $\gg$ depend on
$F,$ the set $V$, the matrix $U$, the choice of cusp representatives $\eta_m$, and $M$.
So $\mu(\eta_m,\Phi(\tau))\ll 1$ for all cusp representatives
$\eta_m$. Recall that $\Phi(\tau)$ lies in the fundamental set
$\hfd(\diff{F/\IQ}^{-1})$ coming from Proposition
\ref{prop:fundamentalset}.
If $\Phi(\tau)=(\tau_1,\ldots,\tau_g)$, then 
$|\!\real{\tau_l}|\ll 1$ and 
$\imag{\tau_l}\gg 1$ for all $1\le l\le g$.

There are at most finitely many possible $\Lambda$ as in
(\ref{eq:periodmatrix}). 
Let us write $z_{lm}$ for the entries of $Z$. 
The entries of $\Lambda$ are $\varphi_l(\lambda_m)$ and so 
\begin{equation*}
  z_{lm} = \sum_{j=1}^g \varphi_{j}(v\lambda_l\lambda_m) \tau_j
\quad\text{and, in particular}\quad
  z_{ll} = \sum_{j=1}^g \varphi_{j}(v\lambda_l^2) \tau_j
\end{equation*}
for some $v\in V$. 
We observe that $\varphi_j(v)>0$ as $V\subset\O{F}^{\times,+}$ and
$\varphi_{j}(\lambda_l)\in\IR\ssm\{0\}$. So
\begin{equation*}
  |\!\imag{z_{lm}}| \le \sum_{j=1}^g |\varphi_{j}(v\lambda_l\lambda_m)|
  \imag{\tau_j}
\ll \sum_{j=1}^g \varphi_{j}(v\lambda_l^2)  \imag{\tau_j}
=\imag{z_{ll}}
\end{equation*}
for all $1\le l,m\le g$. 
Taking the determinant of the imaginary part of (\ref{eq:periodmatrix}) yields
\begin{equation*}
1\ll \prod_{j=1}^g\imag{\tau_j}
=(\det\Lambda)^{-2} \det\imag{Z} \ll \det\imag{Z}.
\end{equation*}
So $\imag{Z}$ lies in the set $Q_g(t)$ from Definition 2, Chapter I.2
\cite{Kling} for all sufficiently large $t$.

On considering the real part we obtain $|\!\real{z_{lm}}|\ll 1$ from
(\ref{eq:periodmatrix}) 
and from $|\!\real{\tau_l}|\ll 1$.
Moreover, $\imag{z_{11}} \gg \imag{\tau_1} \gg 1$. 


Hence $Z$ lies in $L_g(t)$ as in Definition 2, Chapter I.3
\cite{Kling}
for all large $t$. 
The existence of the finite set $\Sigma$ now follows from
Theorem 1, \textit{ibid.}
\end{proof}

By our relation (\ref{eq:periodmatrix}).
the entries of $Z$ are 
 contained in the normal closure of $K/\IQ$. 
In particular, the entries of $Z$ are contained in a number field
whose degree over $\IQ$ is bounded by a constant depending only on
$g$. 
We use a recent result of Pila and Tsimerman
to bound the height of a reduced
period matrix. 

\begin{lem}
\label{lem:pilatsimerman}
Let us suppose that $A$ is simple. 
If $\gamma\in\SP{2g}{\IZ}$ with $\gamma Z\in \sfd_g$, then $H(\gamma
Z)\le |\Delta_K|^c$
for a constant $c=c(g)>0$ that depends only on $g$. 
\end{lem}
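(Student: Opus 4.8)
The plan is to combine the bound of Lemma~\ref{lem:rgoupperbound}(i) on the imaginary part of a Siegel-reduced period matrix $\gamma Z$ with a recent polynomial bound, due to Pila and Tsimerman, on the height of the (algebraic) period matrix of a simple CM abelian variety, and then reconcile the two by tracking how the coordinates of $Z$ are built from $\tau=\omega_2/\omega_1$. First I would recall that $\gamma Z\in\sfd_g$ is an algebraic point whose entries lie in the Galois closure of $K/\IQ$, hence in a field of degree over $\IQ$ bounded only in terms of $g$. The Pila--Tsimerman estimate (their bound on the height of the period point of a principally polarised CM abelian variety, phrased in terms of the discriminant of the CM order, which here is $\O K$) gives a multiplicative height of $\gamma Z$ that is polynomial in $|\Delta_K|$; this is precisely the inequality $H(\gamma Z)\le|\Delta_K|^{c}$ we want. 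The main work is therefore bookkeeping: making sure that the ``height'' in the Pila--Tsimerman statement (whatever normalisation they use, whether projective or affine, whether of the matrix or of the associated point in $\mathcal A_g$) matches the $H(\cdot)$ used here, and that the dependence on the CM data enters only through $|\Delta_K|$ and $g$.

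The key steps, in order, are: (1) record that $Z$, and hence $\gamma Z$ for any $\gamma\in\SP{2g}{\IZ}$, has all entries in the normal closure of $K$, so $[\IQ(\gamma Z):\IQ]\ll_g 1$, as already noted right before the lemma; (2) invoke Pila--Tsimerman to bound the Weil height of the point of $\mathcal A_g$ represented by $Z$ in terms of a polynomial in the discriminant of the endomorphism ring, here $\O K$, so in terms of $|\Delta_K|^{c(g)}$; (3) transfer this height bound from the moduli point to the specific reduced representative $\gamma Z\in\sfd_g$, using that Siegel reduction places $\gamma Z$ in a region where the usual height and the modular height are comparable up to bounded factors — this is where Lemma~\ref{lem:rgoupperbound}(i) is useful, since it controls $\tr{\imag{\gamma Z}}{}$ from above by a power of $|\Delta_K|^{1/2}$, ensuring $\gamma Z$ does not run off to the cusp faster than polynomially; and (4) absorb all implicit constants, which depend only on $g$, into the exponent $c=c(g)$ (enlarging $c$ if necessary), to conclude $H(\gamma Z)\le|\Delta_K|^{c}$.

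**Main obstacle.** The delicate point is step~(3): Pila--Tsimerman's theorem naturally bounds the height of a point in the Siegel modular variety (or of some chosen fundamental-domain representative under \emph{their} reduction conventions), and one must check that this is compatible with the height of the entries of our particular $\gamma Z\in\sfd_g$. Two period matrices representing the same point of $\mathcal A_g$ differ by the $\SP{2g}{\IZ}$-action, which can blow up heights; so one needs that the reduction into $\sfd_g$ does not increase the height beyond a polynomial factor in $|\Delta_K|$. This is exactly where one uses that the imaginary part of $\gamma Z$ is bounded polynomially (Lemma~\ref{lem:rgoupperbound}(i)), that the real parts are bounded by $1/2$ by definition of $\sfd_g$, and standard facts comparing the Faltings-type/modular height on $\mathcal A_g$ with the naive height of a reduced period matrix. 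Everything else — the algebraicity and bounded degree of CM period matrices, the fact that the relevant CM order is the maximal order $\O K$ so its discriminant is $\Delta_K$ — is already in place from the preceding discussion, so the proof reduces to citing Pila--Tsimerman and carrying out this comparison of height normalisations.
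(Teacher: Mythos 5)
Your proposal is in substance the same as the paper's proof: the lemma is obtained by citing Pila and Tsimerman's Theorem 3.1 \cite{PilaTsimerman}, together with the observation that simplicity of $A$ forces $\Endo{A}=\O{K}$, so the discriminant entering their bound is $\Delta_K$; the degree bound $[\IQ(\gamma Z):\IQ]\ll_g 1$ is the remark preceding the lemma, exactly as in your step (1).

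One caveat about your step (3). The transfer you describe is not needed, because the Pila--Tsimerman estimate is formulated precisely for a representative of the CM point lying in Siegel's fundamental domain $\sfd_g$, so it applies directly to $\gamma Z$ (different reduced representatives of the same point differ only by elements of a fixed finite subset of $\SP{2g}{\IZ}$, which affects the height by a bounded amount). More importantly, the mechanism you propose for that step would not work if it were needed: Lemma \ref{lem:rgoupperbound}(i) only bounds $\Tr(\imag{\gamma Z})$, i.e.\ the size of the entries at the one distinguished complex embedding, and such an archimedean bound at a single place says nothing about the contributions of the other archimedean and non-archimedean places to the Weil height $H(\gamma Z)$. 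A comparison between the modular height of the point of $\mathcal{A}_g$ and the naive height of its reduced period matrix is essentially the content of the Pila--Tsimerman theorem itself, so invoking it as a ``standard fact'' at this stage would be circular. Dropping step (3) and citing the theorem in the form in which it is stated gives the paper's argument.
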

\begin{proof}
This follows from Pila and Tsimerman's  Theorem 3.1
\cite{PilaTsimerman} 
as the endomorphism ring of $A$ equals $\O{K}$
under the simplicity assumption on $A$. 
\end{proof}


\subsection{The Galois orbit}
We keep the notation of the previous two sections. 

Any field automorphism $\sigma:\IC\rightarrow\IC$ determines a new abelian
variety $A^\sigma$ with complex multiplication. 
Let $\aut{\IC/K^*}$ denote the group of automorphisms that restrict to
the identity on $K^*$, the reflex field of $(K,\Phi)$. 
Shimura's Theorem  18.6 \cite{Shimura} 
describes how to recover a period lattice   of $A^\sigma$ if $\sigma
\in \aut{\IC/K^*}$. We only state a special case of
Shimura's Theorem and avoid the language of id\`eles.
Indeed, by the  assumptions of this section $\mathfrak{A}$ is a
fractional ideal in $K$ and the ideal-theoretic formulation suffices. 

To this extent let $H^*$ denote the Hilbert class field of $K^*$ and 
\begin{equation*}
  \art: \cg{K^*}\rightarrow \gal{H^*/K^*}
\end{equation*}
the group isomorphism coming from class field theory. 

The reflex norm ${\rm N}_{\Phi^*} :(K^*)^\times
\rightarrow K^\times$ is 
\begin{equation*}
  {\rm N}_{\Phi^*}(a) = \prod_{\varphi \in\Phi^*} \varphi(a),
\end{equation*}
\textit{cf.} Section 8.3 \cite{Shimura} for  standard properties including the
fact that the target is indeed $K^\times$. If $\mathfrak B^*$ is a fractional ideal of
$K^*$, then 
$\prod_{\varphi\in\Phi^*}\varphi(\mathfrak B^*)$
is a fractional ideal of $K$ which we denote with 
${\rm N}_{\Phi^*}(\mathfrak B^*)$.
Observe that ${\rm N}_{\Phi^*}$ also induces a homomorphism of class
groups $\cg{K^*}\rightarrow\cg{K}$ which we also denote by
${\rm N}_{\Phi^*}$.

\begin{thm}[Shimura]
\label{thm:shimura}
  Let $A,K,\Phi,K^*,\Phi^*,\mathfrak{A},$ and $t$ be as above and as in the last section. 
Suppose
  $\sigma\in\aut{\IC/K^*}$,  we consider $A^\sigma$ as an abelian
  variety over $\IC$.
Let  $\mathfrak B^*$ be a fractional ideal of $K^*$ with 
$\art([\mathfrak B^*]) = \sigma|_{H^*}$. 
Then $A^\sigma(\IC) \cong \IC^g / \Phi(\mathfrak{A}^\sigma)$ where
$\mathfrak{A}^\sigma = {\rm N}_{\Phi^*}(\mathfrak B^*)^{-1} \mathfrak{A}$
and $t$ transforms to $\inorm{\mathfrak{B}^*}t$. 
In particular,  the set of period lattices in  the
$\aut{\IC/K^*}$-orbit are represented by
\begin{equation*}
\left\{ [\mathfrak{A}^\sigma]; \,\,\sigma \in \aut{\IC/K^*} \right\}  = 
 {\rm N}_{\Phi^*}(\cg{K^*}) [\mathfrak{A}]=
\left\{ {\rm N}_{\Phi^*}([\mathfrak B^*])^{-1} [\mathfrak{A}] ;\,\, [\mathfrak B^*] \in \cg{K^*} \right\}. 
\end{equation*}
\end{thm}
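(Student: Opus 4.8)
The plan is to quote Shimura's Theorem 18.6 \cite{Shimura} in the ideal-theoretic form and then specialise. First I would recall the id\`elic statement: for $\sigma \in \aut{\IC/K^*}$ and an id\`ele $s$ of $K^*$ with $[s, K^*] = \sigma|_{K^{*\mathrm{ab}}}$ under the Artin map, Shimura's theorem says that $A^\sigma(\IC) \cong \IC^g/\Phi(\mathfrak A')$ where $\mathfrak A'$ is obtained from $\mathfrak A$ by multiplying with the fractional ideal $\mathrm N_{\Phi^*}(s)^{-1}$, and that the Riemann form transforms by the scalar $\inorm{s}^{-1}_{K^*}$ (up to the sign/normalisation conventions of \cite{Shimura}; this is where one must be careful). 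Since in our situation $\cO = \cO_K$, the module $\mathfrak M = \mathfrak A$ is a genuine fractional ideal of $\cO_K$, so all the $\mathfrak O$-lattice bookkeeping in Shimura collapses to ideal multiplication and no id\`eles are needed to state the conclusion.

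Next I would pass from id\`eles to ideals. Because $\sigma$ fixes $K^*$ pointwise, $\sigma|_{H^*} \in \gal{H^*/K^*}$, and since $H^*$ is the Hilbert class field of $K^*$, the Artin map $\art$ identifies this restriction with a class $[\mathfrak B^*] \in \cg{K^*}$; conversely every class arises. Choosing an id\`ele $s$ representing $\mathfrak B^*$ (trivial components at the infinite places and at the finitely many ramified or relevant finite places, appropriate uniformisers elsewhere), the fractional ideal attached to $\mathrm N_{\Phi^*}(s)$ is exactly $\mathrm N_{\Phi^*}(\mathfrak B^*) = \prod_{\varphi \in \Phi^*} \varphi(\mathfrak B^*)$, and its id\`ele norm down to $\IQ$ reduces to $\inorm{\mathfrak B^*}$. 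Substituting into Shimura's conclusion gives $\mathfrak A^\sigma = \mathrm N_{\Phi^*}(\mathfrak B^*)^{-1} \mathfrak A$ and $t \mapsto \inorm{\mathfrak B^*} t$, which is the first assertion.

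For the ``in particular'' clause I would argue as follows. The lattice $\Phi(\mathfrak A^\sigma)$, up to the $\IC^g$-isomorphisms allowed (scaling each coordinate, i.e. homothety by $\Phi(\lambda)$ for $\lambda \in K^\times$), depends only on the ideal class $[\mathfrak A^\sigma] \in \cg{K}$; so the set of period lattices in the orbit is $\{[\mathfrak A^\sigma] : \sigma \in \aut{\IC/K^*}\}$. By the first part this equals $\{\mathrm N_{\Phi^*}([\mathfrak B^*])^{-1}[\mathfrak A] : [\mathfrak B^*] \in \cg{K^*}\}$ as $[\mathfrak B^*]$ ranges over all of $\cg{K^*}$ (surjectivity of $\art$), which is precisely the coset $\mathrm N_{\Phi^*}(\cg{K^*})[\mathfrak A]$ in $\cg{K}$, using that $\mathrm N_{\Phi^*}$ is a group homomorphism $\cg{K^*} \to \cg{K}$ and hence its image is a subgroup stable under inversion.

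The main obstacle is bookkeeping rather than conceptual: matching \cite{Shimura}'s normalisations for the Artin/reciprocity map and the transformation of the Riemann form (the precise power of the id\`ele norm, and whether one gets $\mathfrak B^*$ or its inverse), and verifying that the choice of representing id\`ele $s$ does not affect the resulting ideal class — this is clear on the level of classes but should be stated. One should also note explicitly that $A^\sigma$ is again principally polarised with the transformed form, which is needed for consistency with the rest of the paper but follows from Shimura's theorem together with $\inorm{\mathfrak B^*} \in \IZ_{>0}$ being a principal adjustment absorbed into rescaling. Everything else is a direct transcription of Shimura into the ideal language legitimised by $\cO = \cO_K$.
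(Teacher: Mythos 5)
Your proposal is correct and follows essentially the same route as the paper: both reduce to Shimura's Theorem 18.6(1), use that $\mathfrak{A}$ is a fractional ideal of $\O{K}$ so the id\`elic action collapses to ideal multiplication, and deduce the ``in particular'' clause from the surjectivity of the Artin map onto $\gal{H^*/K^*}$. Your extra care about normalisations and the transformation of $t$ only makes explicit what the paper leaves to the citation.
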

\begin{proof}
  The first statement follows from  Theorem 18.6 part (1) \cite{Shimura}. 
Observe that $\mathfrak{A}$ is a fractional ideal, so the action 
by the finite id\`eles
 factors through the maximal compact subgroup.
The second statement is a consequence of the fact that the Artin homomorphism  is bijective.
\end{proof}

If $G$ is an abelian group, then $G[2]$ denotes its subgroup of
elements that have order dividing $2$. 

We now specialise to the case we are interested in. The following
lemma is well-known. 

\begin{lem}
\label{lem:reflexlb1}
  Suppose $K/\IQ$ is cyclic of degree $4$. 
Then $(K,\Phi)$ is primitive, $A$ is a simple abelian variety,  $K^*=K$, and
\begin{equation}
\label{eq:imagereflexnorm}
  \# {\rm N}_{\Phi^*}(\cg{K^*}) \ge  \frac{\# \cg{K} }{\# \cg{K}[2] \cdot \#
    \cg{F} }.
\end{equation}
\end{lem}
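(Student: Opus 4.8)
The plan is to settle the first three assertions with standard complex-multiplication theory and then extract the inequality from a short computation with the reflex norm on the ideal class group. Write $G=\gal{K/\IQ}=\langle\sigma\rangle$ with $\sigma$ of order $4$. Since a quartic CM-field is totally imaginary and $K/\IQ$ is Galois, complex conjugation is a well-defined element of $G$, necessarily the unique element of order $2$, so it equals $\sigma^2$; thus $F$, the maximal totally real subfield, is the fixed field of $\langle\sigma^2\rangle$. First I would note that $K$ has exactly one proper subfield of degree $>1$, namely $F$, which is totally real, so $K$ contains no imaginary quadratic subfield. A CM-type of a quartic CM-field is imprimitive only when it is induced from a CM-type of an imaginary quadratic subfield; hence every CM-type $\Phi$ of $K$ is primitive. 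Primitivity of $(K,\Phi)$ forces $\Endo{A}\otimes\IQ=K$, which is a field, so $A$ is simple. Finally, $K/\IQ$ being Galois gives $K^*\subseteq K$, and $K^*$ is the subfield fixed by $\{g\in G:g\Phi=\Phi\}$; since none of $\sigma,\sigma^2,\sigma^3$ stabilises a set of coset representatives of $\langle\sigma^2\rangle$ in $G$, this subgroup is trivial and $K^*=K$.

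Next I would reduce $(\ref{eq:imagereflexnorm})$ to a kernel estimate. As $K^*=K$, the reflex CM-type $\Phi^*$ is again a CM-type of $K$, and any CM-type of $K$ contains exactly one of the even powers $\{1,\sigma^2\}$ and one of the odd powers $\{\sigma,\sigma^3\}$ of $\sigma$, hence has the form $\{\sigma^a,\sigma^{a+1}\}$ for some integer $a$. On fractional ideals ${\rm N}_{\Phi^*}$ is then $\mathfrak b\mapsto\sigma^a(\mathfrak b)\,\sigma^{a+1}(\mathfrak b)$, and on the class group $\cg{K}$ — written additively, with its natural $G$-action — it is the endomorphism $\sigma^a\circ(1+\sigma)$. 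Since $\sigma^a$ acts as an automorphism of $\cg{K}$, this endomorphism has the same kernel, and hence the same image cardinality $\#\cg{K}/\#\ker$, as $\phi:=1+\sigma$. Thus $\#{\rm N}_{\Phi^*}(\cg{K^*})=\#\cg{K}/\#\ker\phi$, and it suffices to prove $\#\ker\phi\le\#\cg{K}[2]\cdot\#\cg{F}$.

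Finally I would bound $\ker\phi=\{x\in\cg{K}:\sigma x=-x\}$. Every such $x$ also satisfies $\sigma^2x=x$, i.e.\ $\ker\phi$ is fixed by complex conjugation. Let $N\colon\cg{K}\to\cg{F}$ be the relative norm and $\iota\colon\cg{F}\to\cg{K}$ the extension of ideals; since $\norm{\mathfrak b}{K/F}\O{K}=\mathfrak b\,\overline{\mathfrak b}$ for every ideal $\mathfrak b$ of $\O{K}$, the composite $\iota\circ N$ is the endomorphism $1+\sigma^2$ of $\cg{K}$. Consequently, for $x\in\ker\phi$ with $N(x)=0$ we get $2x=(1+\sigma^2)x=\iota(N(x))=0$, so $\ker\phi\cap\ker N\subseteq\cg{K}[2]$. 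Applying the first isomorphism theorem to $N$ restricted to $\ker\phi$ now gives $\#\ker\phi\le\#(\ker\phi\cap\ker N)\cdot\#\cg{F}\le\#\cg{K}[2]\cdot\#\cg{F}$, which is the required estimate and yields $(\ref{eq:imagereflexnorm})$.

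The one place where I expect genuine care to be needed is the middle step: pinning down the reflex CM-type $\Phi^*$ and verifying that it induces, up to an automorphism of $\cg{K}$, the map $1+\sigma$, together with the two Galois identities $\ker(1+\sigma)\subseteq\cg{K}^{\sigma^2}$ and $\iota\circ N_{K/F}=1+\sigma^2$ on $\cg{K}$. The remaining ingredients — primitivity of $(K,\Phi)$, simplicity of $A$, the equality $K^*=K$, and the concluding index count — are routine.
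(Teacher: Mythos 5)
Your proposal is correct, but it is organised differently from the paper's argument, so a brief comparison is in order. For the preliminary claims you argue directly (no imaginary quadratic subfield of a cyclic quartic $K$, hence every CM-type is primitive; trivial stabiliser of $\Phi$ in $\gal{K/\IQ}$, hence $K^*=K$), whereas the paper simply invokes Shimura (Proposition 26, Chapter II, and Example 8.4); both are fine, and since $\gal{K/\IQ}$ is abelian your left/right stabiliser computation is unambiguous. For the inequality itself the two proofs are dual counts built from the same ingredients. You identify the reflex norm on $\cg{K}$, up to the automorphism $\sigma^a$, with the endomorphism $1+\sigma$ (written additively), so that $\#{\rm N}_{\Phi^*}(\cg{K})=\#\cg{K}/\#\ker(1+\sigma)$, and you bound the kernel by mapping it to $\cg{F}$ via the relative norm and using $\iota\circ{\rm N}_{K/F}=1+\sigma^2$ together with the observation $\ker(1+\sigma)\cap\ker{\rm N}_{K/F}\subseteq\cg{K}[2]$. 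The paper instead never computes the kernel: it uses the telescoping identity $\alpha\theta(\alpha)\bigl(\theta(\alpha)\theta^2(\alpha)\bigr)^{-1}=\alpha\overline{\alpha}^{-1}$ to see that $\alpha\overline{\alpha}^{-1}$ lies in the image of the reflex norm, notes that $\alpha\overline{\alpha}$ is one of at most $\#\cg{F}$ classes extended from $\cg{F}$, and concludes that the $\#\cg{K}/\#\cg{K}[2]$ squares of $\cg{K}$ are covered by $\#\cg{F}$ translates of the image. In additive language the paper exploits $2=(1+\sigma^2)+(1+\sigma)(1-\sigma)$ on all of $\cg{K}$, while you exploit $2=1+\sigma^2$ on $\ker(1+\sigma)$; the ingredients (reflex norm is essentially $1+\sigma$, the map $1+\sigma^2$ factors through $\cg{F}$, and $2$-torsion absorbs the loss) coincide, but your kernel--image formulation is a genuinely different decomposition of the count and has the small bonus of giving the exact value $\#{\rm N}_{\Phi^*}(\cg{K})=\#\cg{K}/\#\ker(1+\sigma)$ rather than only the lower bound.
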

\begin{proof}
In this lemma we identify  the embeddings in the CM-type $\Phi =
\{\sigma_1,\sigma_2\}$ with automorphisms of $K$.
By hypothesis $\gal{K/\IQ}\cong\IZ/4\IZ$. As
 $\sigma_2\sigma_1^{-1}$ is neither the identity nor complex
conjugation, it must generate the Galois group. 
So $(K,\Phi)$ is primitive by Proposition
  26, Chapter II \cite{Shimura}. Therefore, $A$ is simple. 

Further down in Example 8.4 \textit{loc.~cit.}, Shimura remarks $K^*=K$ and 
$\Phi^* = \{\sigma_1^{-1},\sigma_2^{-1}\}$ under the assumption  that $(K,\Phi)$
is  primitive
 and $K/\IQ$ is abelian. 

Observe that
${\rm N}_{\Phi^*}([\mathfrak B]) = 
\sigma_1^{-1}([\mathfrak B])\sigma_2^{-1}([\mathfrak B])$
if $[\mathfrak B]\in \cg{K^*}$ and recall that
$\sigma_2\sigma_1^{-1}$ 
generates $\gal{K/\IQ}$. 
To prove the final claim it suffices to
consider the case where $\sigma_1$ is the identity and $\sigma_2$
generates the Galois group. We abbreviate $\theta=\sigma_2^{-1}$.
Let $\alpha\in \cg{K}$ be arbitrary. 
As $K$ is a CM-field with totally
real subfield $F$, the class $\alpha\overline\alpha$ is represented by
an ideal generated by an ideal of $\O{F}$. Thus there are at most
$\#\cg{F}$ different possibilities for the class
$\alpha\overline\alpha$. On the other hand,
$\alpha\theta(\alpha)(\theta(\alpha)\theta^2(\alpha))^{-1}
=\alpha \theta^2(\alpha)^{-1}= \alpha \overline\alpha^{-1}$ lies in
${\rm N}_{\Phi^*}(\cg{K^*})$. 
So $\alpha^2=(\alpha\overline\alpha) (\alpha\overline\alpha^{-1})$
lies in at most $\#\cg{F}$ translates of ${\rm N}_{\Phi^*}(\cg{K^*})$. 
The bound (\ref{eq:imagereflexnorm}) follows
because $\cg{K}$ contains precisely 
$\#\cg{K}/\#\cg{K}[2]$ squares. 
\end{proof}

\begin{lem}
\label{lem:largegaloisorbit}
Let $\epsilon > 0$. There exists a constant $c=c(\epsilon,F)>0$
depending only on $\epsilon$ and
the totally real field $F$ with the following property. 
Suppose $K/\IQ$ is cyclic of degree
$4$, then
\begin{equation*}
  \#{\rm N}_{\Phi^*}(\cg{K^*}) \ge c |\Delta_K|^{1/2-\epsilon}.
\end{equation*}
\end{lem}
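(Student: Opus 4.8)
The plan is to combine the group-theoretic bound of Lemma~\ref{lem:reflexlb1} with the analytic class number formula, exploiting that $F$ is fixed. By Lemma~\ref{lem:reflexlb1},
\begin{equation*}
\#{\rm N}_{\Phi^*}(\cg{K^*})\ \ge\ \frac{\#\cg{K}}{\#\cg{K}[2]\cdot\#\cg{F}},
\end{equation*}
and $\#\cg{F}$, the discriminant $\Delta_F$, the regulator $R_F$ and the unit index of $F$ are all bounded solely in terms of $F$. It therefore suffices to prove $\#\cg{K}\gg_{F,\epsilon}|\Delta_K|^{1/2-\epsilon}$ and $\#\cg{K}[2]\ll_{\epsilon}|\Delta_K|^{\epsilon}$; the assertion follows after renaming $\epsilon$.

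For the first inequality I would factor the Dedekind zeta function. Since $\gal{K/\IQ}\cong\IZ/4\IZ$, its characters are $1,\chi,\chi^2,\bar\chi$, where $\chi$ is a primitive Dirichlet character of exact order $4$ and $\chi^2$ is the quadratic character cutting out $F$; hence $\zeta_K(s)=\zeta_F(s)\,L(s,\chi)L(s,\bar\chi)$. Comparing residues at $s=1$ through the analytic class number formula, applied to $K$ (totally imaginary quartic, unit rank $1$) and to $F$ (real quadratic, unit rank $1$), gives
\begin{equation*}
\frac{(2\pi)^2\,\#\cg{K}\,R_K}{w_K\,|\Delta_K|^{1/2}}
= \frac{2\,\#\cg{F}\,R_F}{|\Delta_F|^{1/2}}\,\bigl|L(1,\chi)\bigr|^2 .
\end{equation*}
Here $w_K$ is bounded by an absolute constant, and the Hasse unit index relation shows $R_K=(2/Q)R_F$ with $Q\in\{1,2\}$, so $R_K\asymp_F 1$. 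Solving for $\#\cg{K}$ yields $\#\cg{K}\asymp_F|\Delta_K|^{1/2}\,|L(1,\chi)|^2$.

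It remains to bound $|L(1,\chi)|$ from below, and this is the one point requiring care. Because $\chi$ has exact order $4$ it is a genuinely complex character, so $L(s,\chi)$ has no Siegel zero: by the Landau--Page theorem at most one Dirichlet character modulo the conductor $f_\chi$ can produce a real zero in the region $\real{s}>1-c/\log f_\chi$, and such a character must be real. Hence $L(s,\chi)$ is zero-free there, and a routine estimate gives $|L(1,\chi)|\gg 1/\log f_\chi\gg 1/\log|\Delta_K|$, using $f_\chi\le|\Delta_K|$ from the conductor--discriminant formula. Therefore $\#\cg{K}\gg_F|\Delta_K|^{1/2}(\log|\Delta_K|)^{-2}\gg_{F,\epsilon}|\Delta_K|^{1/2-\epsilon}$. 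Finally, genus theory bounds $\dim_{\IF_2}\cg{K}[2]$ by $O(\omega(\Delta_K))$, where $\omega$ counts distinct prime divisors; since $\omega(\Delta_K)=O(\log|\Delta_K|/\log\log|\Delta_K|)$ we obtain $\#\cg{K}[2]=|\Delta_K|^{o(1)}\ll_{\epsilon}|\Delta_K|^{\epsilon}$. Combining the three estimates finishes the proof. The crux is the effectivity of the lower bound for $L(1,\chi)$: it is precisely the genuinely complex nature of $\chi$ that removes the usual obstruction, the only potential Siegel zero being that of the fixed quadratic field $F$, which is harmless and absorbed into the $F$-dependent constant.
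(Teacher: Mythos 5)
Your proposal is correct, and it reaches the conclusion by a genuinely different analytic route than the paper. Both arguments start from Lemma \ref{lem:reflexlb1} and both must control the regulator and the $2$-torsion: the paper quotes Washington's Proposition 4.16 for $R_K\le 2R_F$ and Zhang's Proposition 6.3(2) for $\#\cg{K}[2]\ll_\epsilon |\Delta_K|^\epsilon$, and then gets the class number lower bound in one stroke from the Brauer--Siegel Theorem, $R_K\#\cg{K}\gg_\epsilon |\Delta_K|^{1/2-\epsilon}$. You instead factor $\zeta_K(s)=\zeta_F(s)L(s,\chi)L(s,\bar\chi)$ with $\chi$ a primitive quartic Dirichlet character, compare residues via the analytic class number formula (your regulator identity $R_K=(2/Q)R_F$ is the same input as Washington's proposition), and replace Brauer--Siegel by the classical effective bound $|L(1,\chi)|\gg 1/\log f_\chi$, valid because $\chi$ is genuinely complex and so immune to Siegel zeros; for the $2$-torsion you invoke genus theory rather than Zhang. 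What your route buys is effectivity and in fact a sharper intermediate bound $\#\cg{K}\gg_F |\Delta_K|^{1/2}(\log|\Delta_K|)^{-2}$, with the $\epsilon$-loss coming only from $\#\cg{K}[2]$; what the paper's route buys is brevity, since everything is delegated to citations (and ineffectivity is harmless for the finiteness statements it feeds into). Two small points deserve care in a full write-up: the lower bound $L(1,\chi)\gg 1/\log f_\chi$ for complex characters, while classical (Landau; see Montgomery--Vaughan), should be cited rather than called routine; and ``genus theory'' for a cyclic quartic field is most cleanly justified via Chevalley's ambiguous class number formula for the quadratic extension $K/F$ (giving $\dim_{\IF_2}\cg{K}[2]\le 2\log_2\#\cg{K}^{\gal{K/F}}\ll_F \omega(\Delta_K)$), which is exactly the content of the Zhang reference the paper uses; note the resulting constant depends on $F$ through $h_F$, which your statement permits.
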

\begin{proof}
Zhang's Proposition 6.3(2) \cite{zhang:equicm}  implies $\#\cg{K}[2]\le c
|\Delta_K|^{\epsilon}$
where $c$ depends only on $\epsilon>0$. 

Next we bound the class number of $K$ from below using the Brauer-Siegel
Theorem. 
For any imaginary quadratic extension $K$ of $F$ that is Galois over $\IQ$ we have
$R_K \#\cg{K} \ge c |\Delta_K|^{1/2-\epsilon}$
with a possibly smaller constant $c>0$, here $R_K>0$ denotes the
regulator of $K$. 
By Proposition 4.16 \cite{Washington}, $R_K$ is at most twice the
regulator of $F$. As we allow $c$ to depend on $F$ our lemma follows
from Lemma \ref{lem:reflexlb1}
on decreasing this constant $c$ if necessary. 
\end{proof}


\section{Faltings height}
\label{sec:faltingsheight}
We begin by recalling the definition of the Faltings height of an
abelian variety. Then we apply a known case of Colmez's Conjecture to
compute the Faltings height of certain CM abelian varieties in terms
of the $L$-functions. Finally, we will give an alternative formula for
the Faltings height for an  abelian variety that has good reduction
everywhere and that is the  jacobian of a hyperelliptic curve in genus
$2$. 

\subsection{General abelian varieties}
\label{sec:gav}
Let $A$ be an abelian variety of dimension $g\ge 1$ defined over
a number field $k$.
After extending $k$ we may suppose that $A$ has semi-stable reduction
at all finite places of $k$.
Put $S=\Spec{\mathcal O}_k$, where
${\mathcal O}_k$ is the ring of integers of $k$. Let ${\mathcal A}\longrightarrow S $
be the N\'eron model  of $A$. We shall denote by
$\varepsilon:S\rightarrow \mathcal A$
  the zero section.
We write $\Omega_{{\mathcal A}/S}^g$
for the $g$-th exterior power of the sheaf of relative differentials of 
the smooth morphism $\mathcal A\rightarrow S$. This is an invertible sheaf on $\mathcal A$ and
its pull-back
 $\varepsilon^*\Omega_{\mathcal A/S}^g$
is an invertible sheaf on $\Spec{\mathcal O}_k$.



For any embedding $\sigma$ of $k$ in $\mathbb{C}$,
the base change $\mathcal{A}_\sigma = \mathcal{A}\otimes_\sigma\IC$ is an abelian
variety over $\Spec\IC$. There is  a canonical isomorphism
$$
\varepsilon^* \Omega_{\mathcal A/S}^g \otimes_{\sigma}\mathbb{C}\simeq H^0({\mathcal
A}_{\sigma},\Omega^g_{{\mathcal A}_\sigma})\;$$
as vector spaces over $\IC$.
So we can  equip the first vector space with the  $L^2$-metric $\Vert\cdot\Vert_{\sigma}$ defined by
$$\Vert\alpha\Vert_{\sigma}^2=\frac{i^{g^2}}{c_0^{g}}\int_{{\mathcal
A}_{\sigma}(\mathbb{C})}\alpha\wedge\overline{\alpha},\;$$
for a normalizing universal constant $c_0 > 0$. 

The rank one ${\mathcal O}_k$-module 
$\varepsilon^* \Omega_{\mathcal A/S}^g$, together with the hermitian norms
$\Vert\cdot\Vert_{\sigma}$ at infinity defines an hermitian line bundle 
 over $S$. 

 Recall that for any hermitian line
bundle $\overline{\omega}$ over $S$, the Arakelov degree of $\overline{\omega}$
is defined as
$$\widehat{\degr}(\overline{\omega})=\log\#\left({\omega}/{{\mathcal
O}}_k\magicsection\right)-\sum_{\sigma\colon k\rightarrow \mathbb{C}}\log\Vert
\magicsection\Vert_{\sigma}\;,$$
where $\magicsection$ is  any non zero section of $\overline\omega$. The Arakelov degree
is independent of the choice of $\magicsection$ by the product
formula.

We now give the definition of the Faltings height, see page 354
\cite{Faltings:ES}, which is sometimes also called the differential height.

\begin{defin}\label{FaltHeight}  The  stable Faltings height of $A$ is 
$$h(A):=\frac{1}{[k:\mathbb{Q}]}\widehat{\degr}(\overline\omega)
\quad\text{where}\quad \omega = \varepsilon^* \Omega_{\mathcal
  A/S}^g$$
becomes a hermitian line bundle $\overline\omega$ when equipped with the metrics mentioned above, and we fix $c_0=2\pi$.
\end{defin}

To see that it satisfies a Northcott Theorem, see for instance Faltings's  Satz~1 \cite{Faltings:ES},
page 356 and 357 or
 the second-named author's explicit estimate \cite{Paz2}. 

For a discussion on some interesting values for $c_0$, see \cite{Paz}.
Faltings uses
 $c_0=2$. In this paper, the choice will be $c_0=2\pi$, following
 Deligne and Bost. This choice removes the $\pi$ in the expression
 derived from the Chowla-Selberg formula in the CM case. 
The choice $c_0=(2\pi)^2$ leads to a non-negative height $h_{F^+}(A)$
due to a result of Bost. 
 In any case one has the easy relations 
 \begin{equation*}
 h(A)=h_{Deligne}(A)=h_{Bost}(A)=\frac{g}{2}\log2\pi + h_{Colmez}(A)=\frac{g}{2}\log\pi+h_{Faltings}(A)=-\frac{g}{2}\log2\pi+h_{F^+}(A).  
 \end{equation*}

\subsection{Colmez's Conjecture}
\label{sec:colmezconj}

Colmez's Conjecture \cite{ColmezAnnals} relates the Faltings height of
an abelian variety with complex multiplication and the logarithmic
derivative of certain $L$-functions at $s=0$. 
In the same paper Colmez  proved his conjecture for 
CM-fields that are abelian extensions
of $\IQ$ and satisfy a ramification condition above $2$. 
Obus \cite{Obus} then generalised the result by dropping the
ramification restriction.
Yang \cite{Yang10} verified the conjecture for certain abelian surfaces
whose CM-field is not Galois over $\IQ$ cases. Our work will rely only
on the case when the CM-field is a cyclic, quartic extension of the
rationals.

Let us briefly recall Colmez's Conjecture when the CM-field $K$ is an
abelian extension of $\IQ$ of degree $2g$.
 Let $\Phi=\{\varphi_1,\ldots,\varphi_g\}$ be a CM-type of
$K$. If $\varphi:K\rightarrow \IQbar$ is an embedding, Colmez sets
 \begin{equation*}
   a_{K,\varphi,\Phi}(g_0)= 
\left\{
\begin{array}{ll}
  1 & \text{if }g_0\varphi \in \Phi, \\
  0 & \text{else wise}
\end{array}\right.
 \end{equation*}
for all $g_0\in\aGal{\IQ}$
and 
$A_{K,\Phi} = \sum_{\varphi\in\Phi}a_{K,\varphi,\Phi}$.
Then $A_{K,\Phi}$ factors through $\gal{K/\IQ}$ and by abuse of
notation we sometimes consider $A_{K,\Phi}$ as a function on this
Galois group. It is a
$\IC$-linear combination of the irreducible characters of $\gal{K/\IQ}$. Moreover, the  Artin $L$-series
attached to any character that contributes to this sum 
is  holomorphic and non-zero at $s=0$. If $\chi$ is any character of
$\gal{K/\IQ}$ then $f_\chi$ denotes the conductor of $\chi$. 

In the following result we use the normalisation of the Faltings
height used in Section \ref{sec:gav}. 

\begin{thm}[Colmez, Obus]
\label{thm:colmezobus}
  Let $A$ be an abelian variety  defined over
  a number field that is a subfield of $\IC$.
We suppose that $A$ has complex multiplication by the ring of integers
of a CM-field $K$ of degree $2\dim A$ over $\IQ$. This data provides a CM-type $\Phi$ of
$K$.
Suppose in addition that $K/\IQ$ is an abelian
extension. 
 Say
$A_{K,\Phi} =  \sum_m c_m\chi_m$ where the $\chi_m$
denote the irreducible characters of $\gal{K/\IQ}$. Then
\begin{equation*}
  h(A)  =  \left(-\sum_{m} c_m 
  \left(\frac{L'(\chi_m,s)}{L(\chi_m,s)} + \frac 12 \log f_{\chi_m}\right) + \frac{g}{2}\log 2\pi\right)\Big\vert_{s=0}
\end{equation*}
where the right hand side is evaluated at $s=0$.
\end{thm}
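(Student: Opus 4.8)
The plan is to deduce the formula from the known cases of Colmez's Conjecture rather than to reprove it. First I would quote Colmez's main theorem from \cite{ColmezAnnals}: to the CM pair $(K,\Phi)$ he attaches the function $A_{K,\Phi}$ on $\gal{K/\IQ}$ described just before the statement, writes $A_{K,\Phi}=\sum_m c_m\chi_m$ in the basis of irreducible characters, and — when $K/\IQ$ is abelian and a certain ramification condition above $2$ is met — proves that the Faltings height in his normalisation satisfies
\begin{equation*}
  h_{Colmez}(A) = -\sum_m c_m\left(\frac{L'(\chi_m,s)}{L(\chi_m,s)}+\frac 12\log f_{\chi_m}\right)\Big|_{s=0}.
\end{equation*}
The logarithmic derivatives make sense here because, as recorded above the statement, each $\chi_m$ occurring in $A_{K,\Phi}$ has $L(\chi_m,s)$ holomorphic and non-vanishing at $s=0$; this non-vanishing is part of the input one cites from Colmez's framework. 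Since $A$ has complex multiplication by $\O{K}$ with $[K:\IQ]=2\dim A$, it is one of the abelian varieties to which Colmez's theorem applies, so the identity holds for our $A$.

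Second, I would invoke Obus's theorem \cite{Obus}, which establishes Colmez's Conjecture for every abelian CM-field with no ramification hypothesis. In particular it covers the cyclic quartic CM-fields relevant to this paper, so the displayed identity for $h_{Colmez}(A)$ holds unconditionally in the abelian case and we need not track the extra hypothesis from \cite{ColmezAnnals}.

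Third, it remains only to pass from Colmez's normalisation of the Faltings height to the one fixed in Definition \ref{FaltHeight} (the choice $c_0=2\pi$). The list of elementary relations recorded right after that definition gives $h(A)=\frac{g}{2}\log 2\pi + h_{Colmez}(A)$; substituting the formula for $h_{Colmez}(A)$ from the previous two steps yields exactly the asserted identity, with additive term $\frac{g}{2}\log 2\pi$ and no further correction.

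The main obstacle is entirely bookkeeping with normalisations. Colmez, Obus, Bost, Deligne and Faltings each use a slightly different constant ($c_0\in\{2,\pi,2\pi,(2\pi)^2\}$), and according to the convention one may meet the $L$-functions at $s=0$ or, via the functional equation, at $s=1$, with stray factors of $\log\pi$, $\log 2$ and the Euler--Mascheroni constant that must be tracked and seen to cancel. One has to check that with $c_0=2\pi$ — the value the authors single out precisely because it clears the powers of $\pi$ coming from the Chowla--Selberg formula — the arithmetic side is literally $-\sum_m c_m\bigl(L'(\chi_m,0)/L(\chi_m,0)+\frac 12\log f_{\chi_m}\bigr)$ and the analytic constant is exactly $\frac{g}{2}\log 2\pi$. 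This is routine, but it is the step where an error would most plausibly creep in, so I would carry it out explicitly against Colmez's definition of his height.
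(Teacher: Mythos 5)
Your proposal is correct and follows essentially the same route as the paper: the authors also simply cite Colmez's theorems (with the caveat about the ramification/`$\log 2$' ambiguity in the abelian case) together with Obus's removal of that restriction, and then account for the normalisation $c_0=2\pi$ via the relation $h(A)=\frac{g}{2}\log 2\pi + h_{Colmez}(A)$ recorded after Definition \ref{FaltHeight}. No gap to report.
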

\begin{proof}
  We refer to
Colmez's Th\'eor\`emes 0.3(ii) and III.2.9 \cite{ColmezAnnals}
from which the result follows modulo a rational multiple of $\log 2$. 
Subsequent work of Obus \cite{Obus} removed this ambiguity.
\end{proof}



Let us consider what happens for  an abelian
surface when $K/\IQ$ is cyclic. 

\begin{prop}\label{Colmez lower bound}
Suppose $K$ is a CM-field with $K/\IQ$ cyclic of degree $4$
and let $F$ be the real quadratic subfield of $K$. 
\begin{enumerate}
\item [(i)]
Let $\Phi$ be any CM-type of $K$. If $g_0\in
\gal{K/\IQ}$, then
\begin{equation*}
A_{K,\Phi}(g_0)=\left\{
\begin{array}{ll}
  2 &\text{if $g_0=1$,}\\
  0 &\text{if $g_0$ has order $2$,}\\
  1 &\text{if $g_0$ has order $4$.}\\
\end{array}\right.
\end{equation*}
\item[(ii)] 
As a function on $\gal{K/\IQ}$ we can decompose $A_{K,\Phi} = \chi_0+\frac 12 \chi$
with $\chi_0$ the trivial character, $\chi$ is induced by the
non-trivial character of $\gal{K/F}$. Moreover, 
the conductor $f_\chi$ of $\chi$ is $\Delta_K/\Delta_F$. 
\item[(iii)]
Let $A$ be a simple abelian surface with endomorphism ring
$\O{K}$. Then
\begin{equation*}
h(A) = - \frac 12 \frac{L'(0)}{L(0)} -\frac
14 \log \frac{\Delta_K}{\Delta_F} 
 = 
 \frac 12 \frac{L'(1)}{L(1)} +\frac
14 \log \frac{\Delta_K}{\Delta_F} -\log(2\pi)- \gamma_{\IQ}
\end{equation*}
where $L(s)=\zeta_K(s)/\zeta_F(s)$ is a quotient 
of the Dedekind $\zeta$-functions of $K$ and $F$, respectively
and $\gamma_\IQ=0.577215\ldots$ denotes Euler's constant. 
\item[(iv)]
Let $A$ be as in (iii). Then
\begin{equation*}
  h(A)\ge -c + \frac{\sqrt{5}}{20}\log \Delta_K
\end{equation*}
where $c$ is a   constant that depends only on $F$.
\end{enumerate}
\end{prop}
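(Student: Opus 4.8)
The plan is to deduce the lower bound from part (iii) by controlling the two terms on the right-hand side of the second expression for $h(A)$, namely $\frac12 L'(1)/L(1)$ and $\frac14\log(\Delta_K/\Delta_F)$, where $L(s)=\zeta_K(s)/\zeta_F(s)$. Since $F$ is fixed in the statement, $\Delta_F$ is an absolute constant and may be absorbed into $c$; thus it suffices to show $\frac14\log\Delta_K + \frac12 L'(1)/L(1) \ge -c' + \frac{\sqrt5}{20}\log\Delta_K$ for a constant $c'=c'(F)$. Rearranged, the inequality to prove is
\begin{equation*}
\frac12\frac{L'(1)}{L(1)} \ge -c'' - \left(\frac14-\frac{\sqrt5}{20}\right)\log\Delta_K.
\end{equation*}
The coefficient $\frac14-\frac{\sqrt5}{20} = \frac{5-\sqrt5}{20} > 0$, so we need a lower bound on $L'(1)/L(1)$ that degrades no faster than a negative multiple of $\log\Delta_K$ with this specific (small) slope. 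The value $\frac{\sqrt5}{20}$ presumably comes from optimizing the Hecke/Brauer-Siegel-type estimate below against the $\frac14$ coefficient; I would carry the constant symbolically and pin it down at the end.

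First I would recall that $L(s)=\zeta_K(s)/\zeta_F(s)$ is (up to the archimedean factors) the Hecke $L$-function $L(s,\chi)$ attached to the quadratic Hecke character $\chi$ of $F$ cutting out $K/F$, and it is entire and nonzero at $s=1$ since $\zeta_K$ has a simple pole at $s=1$ matching that of $\zeta_F$. The standard route to bounding $L'(1)/L(1)$ from below is to use the Hadamard factorization / explicit formula for $L(s,\chi)$: one writes
\begin{equation*}
\frac{L'}{L}(s,\chi) = B(\chi) + \sum_{\rho}\left(\frac{1}{s-\rho}+\frac{1}{\rho}\right) - \frac12\log\frac{\Delta_K}{\Delta_F} - \frac12\sum_{v\mid\infty}\frac{\Gamma'}{\Gamma}(\cdots),
\end{equation*}
evaluates at $s=1$, and uses $\Ree B(\chi) = -\sum_\rho \Ree(1/\rho)$ together with the fact that each term $\Ree\bigl(\frac{1}{1-\rho}+\frac{1}{\rho}\bigr)\ge 0$ for a zero $\rho$ with $0<\Ree\rho<1$. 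This yields $L'(1)/L(1) \ge -\frac12\log(\Delta_K/\Delta_F) + O_F(1)$, giving slope $\frac14$ on the $\log\Delta_K$ side after multiplying by $\frac12$ — which is exactly borderline and not enough. To gain the extra room encoded in $\frac{\sqrt5}{20}$, I would instead invoke the available repulsion of real zeros near $s=1$: either a Siegel-type lower bound $L(1,\chi)\gg_\epsilon \Delta_K^{-\epsilon}$ combined with a mean-value/convexity bound $L(1,\chi)\ll_F \log\Delta_K$ and a bound for $L'(1,\chi)$, or — more in the spirit of the paper — the Brauer–Siegel theorem as already used in Lemma \ref{lem:largegaloisorbit}, which gives $h_K R_K \gg_\epsilon \Delta_K^{1/2-\epsilon}$ with $R_K$ bounded in terms of $R_F$ (Washington, Prop.~4.16). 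Writing $L(1,\chi)$ in terms of the class-number formulas for $K$ and $F$ converts this into $L(1,\chi)\gg_{F,\epsilon}\Delta_K^{-\epsilon}$, hence $\log L(1,\chi)\ge -\epsilon\log\Delta_K - c_\epsilon$; combined with an upper bound $L(1,\chi)\le c_F(\log\Delta_K)^{A}$ one controls the derivative term via, say, a Landau-type estimate, ultimately producing $L'(1)/L(1)\ge -(\tfrac12-\delta)\log\Delta_K - c$ for a suitable $\delta=\delta(F)>0$, which is what is needed.

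The main obstacle is obtaining the \emph{slightly better than trivial} slope: the naive Hadamard bound gives precisely the break-even coefficient $\frac14$, so one genuinely needs either a zero-free region / Siegel zero input or the Brauer–Siegel lower bound to push past it, and one must be careful that the resulting constant depends only on $F$ (not on $\epsilon$ in an uncontrolled way) — this is why the statement allows $c=c(F)$. A secondary, more bookkeeping-type difficulty is tracking the archimedean $\Gamma$-factors and the $\Delta_F$, $\zeta_F$ contributions so that everything not involving $\Delta_K$ is cleanly absorbed into $c(F)$; since $F$ is a fixed real quadratic field this is routine but must be done. Once the estimate $L'(1)/L(1)\ge -(\tfrac12-\delta)\log\Delta_K - c(F)$ is in hand with $\delta>0$ chosen so that $\frac12(\tfrac12-\delta) \le \frac14-\frac{\sqrt5}{20}$, substituting into the formula of part (iii) gives $h(A)\ge \frac14\log\Delta_K - \frac12(\tfrac12-\delta)\log\Delta_K - c(F) \ge \frac{\sqrt5}{20}\log\Delta_K - c(F)$, completing the proof.
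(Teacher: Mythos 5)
Your overall reduction is the right one and matches the shape of the paper's argument: start from the expression in (iii) at $s=1$, absorb everything depending only on $F$ into the constant, and observe that one needs a lower bound for $L'(1)/L(1)$ whose slope in $\log\Delta_K$ beats the break-even value $-\tfrac12$ coming from the Hadamard/explicit-formula positivity argument. But the crucial analytic input that achieves this is missing. The paper's proof of (iv) is short: it writes $L'(1)/L(1)=\gamma_K-\gamma_F$, where $\gamma_M$ is the Euler--Kronecker constant of a number field $M$ (the constant term at $s=1$ of $\zeta_M'/\zeta_M$), and quotes Badzyan's Theorem~1, namely $\gamma_M\ge-\tfrac12\bigl(1-\tfrac{\sqrt5}{5}\bigr)\log|\Delta_M|$. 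Since $\gamma_F$ depends only on $F$, part (iii) then gives $h(A)\ge\tfrac14\log\Delta_K-\tfrac14\bigl(1-\tfrac{1}{\sqrt5}\bigr)\log\Delta_K-c(F)=\tfrac{\sqrt5}{20}\log\Delta_K-c(F)$. In particular the constant $\tfrac{\sqrt5}{20}$ is not something to be recovered by optimizing Brauer--Siegel-type estimates at the end; it is exactly $\tfrac14\cdot\tfrac{1}{\sqrt5}$, inherited from Badzyan's improvement of Ihara's bound on Euler--Kronecker constants.

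The genuine gap is your claimed estimate $L'(1)/L(1)\ge-(\tfrac12-\delta)\log\Delta_K-c$. The substitutes you propose do not produce it. A Siegel or Brauer--Siegel input gives $L(1,\chi)\gg_\epsilon\Delta_K^{-\epsilon}$, and convexity/mean-value bounds give $|L'(1,\chi)|\ll_F(\log\Delta_K)^{A}$; dividing one by the other yields only $|L'(1)/L(1)|\ll_{\epsilon,F}\Delta_K^{\epsilon}(\log\Delta_K)^{A}$, a bound of polynomial rather than logarithmic size, which is useless for the required inequality. More fundamentally, Brauer--Siegel controls the residue of $\zeta_K$ at $s=1$ (equivalently the value $L(1,\chi)$), whereas what is needed is a lower bound for the constant term $\gamma_K$ of $\zeta_K'/\zeta_K$ at $s=1$; there is no straightforward passage from the former to the latter that saves a fixed positive proportion of $\log\Delta_K$, and that saving is precisely the content of the Ihara--Badzyan theorem, which must be invoked (or reproved) and is nowhere supplied in your sketch. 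A secondary remark: your proposal treats only part (iv); parts (i)--(iii) still require the computation of $A_{K,\Phi}$, the conductor--discriminant identity $f_\chi=\Delta_K/\Delta_F$, and the Colmez--Obus formula, which your argument takes as given.
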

\begin{proof}
Let us write
$\gal{K/\IQ}=\{1,h,h^2,h^3\}$. Then  $h$ has order $4$
and $h^2$ is complex conjugation on $K$. 
By definition we have $a_{K,\varphi,\Phi}(1)=1$ for all $\varphi\in\Phi$. So 
$A_{K,\Phi}(1)=2$. On the other hand, no two elements of $\Phi$ are
equal modulo complex conjugation. So $A_{K,\Phi}(h^2)=0$.
Finally, $A_{K,\varphi,\Phi}(h)\in\{0,1,2\}$. If
$\Phi=\{\varphi_1,\varphi_2\}$, then 
 simultaneous equalities $h\varphi_1=\varphi_2$ and $h\varphi_2=\varphi_1$ are
 impossible. This rules out $2$. We can also rule out $0$ since
$h\varphi_1=h^2\varphi_2$ and $h\varphi_2=h^2\varphi_1$ are impossible too. Thus
$A_{K,\varphi,\Phi}(h)=1$ and by symmetry we also find
$A_{K,\varphi,\Phi}(h^3)=1$. This completes the proof of part (i).

If $\chi$ is the  character of $\gal{K/\IQ}$ induced
by the non-trivial irreducible representation of $\gal{K/F}$, then 
\begin{equation*}
  \chi(h^k)= \left\{
  \begin{array}{ll}
    2 & \text{if $k=0$,}\\
    0 & \text{if $k=1$,}\\
    -2 & \text{if $k=2$,}\\
    0 & \text{if $k=3$.}\\
  \end{array}\right.
\end{equation*}
We observe  $A_{K,\Phi}=\chi_0+\frac 12 \chi_1$ and this yields the
first part of  (ii). 

The conductor
$f_\chi$ equals $\Delta_F\inorm{\reldisc{K/F}}$ by Proposition
VII.11.7(iii) \cite{Neukirch} where $\reldisc{K/F}$ is the relative
discriminant of $K/F$.
The final statement of part (ii) follows
as $\Delta_K=\Delta_F^2 \inorm{\reldisc{K/F}}$.

To prove  (iii)
we first remark that $L(s,\chi_0)$ is the Riemann $\zeta$-function
and that $\zeta_K(s)$ factors as $\zeta_F(s) L(s,\chi)$ with
$L(\cdot,\chi)$ the Artin $L$-function attached to the character $\chi$.
The first equality in (iii) now follows  Theorem
\ref{thm:colmezobus} applied to (ii) and since
$\zeta'(0)/\zeta(0) = \log 2\pi$. The second equality follows from the  functional
equation
of the Dedekind $\zeta$-function.


If $M$ is any number field, then $\gamma_M$ denotes the constant term
in the Taylor expansion  around $s=1$ of the logarithmic derivative of the Dedekind $\zeta$-function of $M$. 
Then $\gamma_M$ is called the  Euler-Kronecker constant of $M$ and
generalises Euler's constant $\gamma_\IQ$ to number fields. 
Badzyan's Theorem 1 \cite{Badzyan} yields the lower bound
\begin{equation*}
   \gamma_M \ge - \frac 12 \left(1-\frac{\sqrt{5}}{5}\right)\log |\Delta_M|.
\end{equation*}
We have $L'(1)/L(1)=\gamma_K-\gamma_F$ and so part (iv) follows from
Badzyan's bound together with part (iii). 
\end{proof}

Colmez \cite{Colmez} also  obtained  a  lower bound for the
Faltings height related to (iv) above.

Part (i) together with Theorem \ref{thm:colmezobus} 
implies  that the Faltings height does not depend on the CM-type of
$K$. 
This was  originally observed by Yang \cite{Yang10}. 

\subsection{Models of curves of genus $2$}
\label{sec:modelsgen2}

This section explains the choice of models used in the next section to give an explicit formula for the Faltings height of abelian surfaces. We will use Weierstrass models of degree 5 and of degree 6 for our curves of genus 2. To be able to choose models of degree 5 for a curve $C$, one needs to have at least one rational point on $C$, which can be obtained through a degree 2 field extension if needed. As plane models they are singular at infinity, one will recover the curve $C$ through desingularisation.

We work with hyperelliptic equations for a
curve $C$ of genus $2$ defined over a field $k$ of characteristic $0$. 

Suppose that $C(k)$ contains a Weierstrass point of $C$.  By
Lockhart's Proposition
1.2 \cite{Lock}  there is
a monic polynomial $P\in k[x]$ of degree $5$ such that
an open, affine subset of $C$ is isomorphic to
the affine curve determined by the equation $\mathcal E: y^2=P$. 
We call $\mathcal E$ a restricted Weierstrass equation for $C$.
Lockhart defines the discriminant of $\mathcal E$ as
$2^8\disc_{5}(P) \in k^\times$.

Say $k$ is a subfield of $\IC$. As on page 740 \cite{Lock} 
we fix an ordering on the roots of $P$ and attach a rank $4$ discrete
subgroup $\Lambda$ of $\IC^2$ and  a
period matrix $Z_{\mathcal E}\in \IH_2$ to $\mathcal E$.
We write $V_{\mathcal E} > 0$ for the covolume of $\Lambda$ in $\IC^2$.

Now we define a larger class of  Weierstrass equations. 
\begin{defin}\label{Liu} 
A  Weierstrass equation $\mathcal E$
for $C/k$ is an equation
$$\mathcal E\; :\;\,  y^{2}+Q y=P,$$
that describes an open, affine subset of $C$ where
 $P,Q\in k[x]$ with $\deg P\le 6$ and $\deg Q\le 3$.
The discriminant of $\mathcal  E$ is defined as
$\Delta_{\mathcal  E} = 2^{-12}\disc_{6}(4P+Q^2)$, it is a non-zero
element of $k$.

Suppose that $k$ is the field of fractions of a discrete valuation
ring. We call $\mathcal E$  integral if $P$ and $Q$ have integral
coefficients. The minimal discriminant $\Delta^0_{\rm min}(C)$ of $C$ is the ideal of the ring of
integers generated
by a discriminant
with minimal valuation among the discriminants of the integral
equations of $C$. In Liu's terminology \cite{Liu:cond} $\Delta^0_{\rm min}(C)$ is called the 
naive minimal discriminant. 
If $k$ is a number field, then by abuse of notation we let
$\Delta^0_{\rm min}(C)$ denote the ideal of
$\O{k}$ that is minimal at each finite place of $k$.
\end{defin}


If $\mathcal E : y^2 =P(x)$ is a restricted Weierstrass equation as
in Lockhart's work, then his notation of discriminant coincides with
the one used above, \textit{i.e.} 
$2^8\disc_{5}(P) = \Delta_{\mathcal E}$ follows from basic properties of
the discriminant. 

Weierstrass equations are  unique up to the following change of
variables, see Corollary 4.33 \cite{Liu3},
\begin{equation*}
(**)\quad
x=\frac{ax'+b}{cx'+d}  \quad\text{and}\quad
y=\frac{H(x')+e y'}{(cx'+d)^{3}} 
\end{equation*}
where $\left(\begin{array}{cc}a & b \\ c & d\end{array}\right)\in{GL_2(k)}$, $e\in{k^*}$, $H\in{k[x']}$ with $\deg H\leq 3$.

\subsection{Hyperelliptic jacobians in genus $2$}
\label{sec:hyperelliptic}
In this section we state  a formula for the Faltings height
of the jacobian of a genus $2$ curve if the said jacobian has potentially
good reduction at all finite places. 
 Ueno \cite{Ueno} had a related expression  
for the Falting height, but with a
 restriction on reduction type which is incommensurable with ours.
The second-named author proved \cite{Paz} another formula
for the Faltings height of hyperelliptic curves of any genus. 

In order to formulate our result we recall the definition of
relevant theta functions
and the $10$ non-trivial theta constants 
 in dimension $2$. The latter correspond precisely to the even characteristics 
 \begin{alignat*}1
\Theta_{1}&=\left\{ \left[\begin{array}{c}
0 \\
0 \\
0 \\
0 	
\end{array}\right], \left[\begin{array}{c}
0 \\
0 \\
0 \\
1/2 	
\end{array}\right],\left[\begin{array}{c}
0 \\
0 \\
1/2 \\
0 	
\end{array}\right],\left[\begin{array}{c}
0 \\
0 \\
1/2 \\
1/2 	
\end{array}\right]\right\},
 \\
  \Theta_{2}&=\left\{\left[\begin{array}{c}
1/2 \\
0 \\
0 \\
0 	
\end{array}\right],\left[\begin{array}{c}
0 \\
1/2 \\
0 \\
0 	
\end{array}\right],\left[\begin{array}{c}
1/2 \\
1/2 \\
0 \\
0 	
\end{array}\right],\left[\begin{array}{c}
0 \\
1/2 \\
1/2 \\
0 	
\end{array}\right],\left[\begin{array}{c}
1/2 \\
0 \\
0 \\
1/2 	
\end{array}\right],\left[\begin{array}{c}
1/2 \\
1/2 \\
1/2 \\
1/2 	
\end{array}\right] \right\}.
 \end{alignat*}
 We abbreviate $\mathcal{Z}_{2}=\Theta_{1}\cup\Theta_{2}$, the union being disjoint.
Say $\trans{(a,b)}\in\mathcal{Z}_2$ with $a,b\in\frac 12 \IZ^2$. 
We denote $Q_{ab}(n)=\trans{(n+a)}Z(n+a)+2\, \trans{(n+a)}b$, we thus
get a theta function
$$\theta_{ab}(0,Z)=\sum_{n\in{\mathbb{Z}^{2}}}e^{i\pi Q_{ab}(n)}.$$

We will use the classical Siegel cusp form
\begin{equation}
\label{def:J10}
  \displaystyle{\chi_{10}(Z)=\prod_{m\in{\mathcal{Z}_2}}\theta_m(0,
    Z)^2},
\quad\text{where}\quad Z\in\IH_2,
\end{equation}
of weight $10$, \textit{cf.} the second Remark after Proposition  2, Section 9 \cite{Kling}. So
\begin{equation*}
  Z\mapsto |\chi_{10}(Z)| \det\imag{Z}^5
\end{equation*}
is an  $\SP{4}{\IZ}$-invariant, real analytic map
$\IH_2\rightarrow\IR$.

For a finite place $\nu$  of a number field we write
\begin{equation}
\label{eq:defiota}
  \iota(\nu) = \left\{
  \begin{array}{ll}
    4 & \text{if } \nu\mid 2,\\
    3 & \text{if } \nu\mid 3,\\
    1 & \text{else wise.}
  \end{array}\right.
\end{equation}

\begin{thm}\label{hyperelliptic}
Let
$C$ be a  curve of genus $2$ defined over a number
 field $k$
such that $C(k)$ contains a Weierstrass point of $C$ and such that 
$\jac{C}$ has good reduction at all finite places of $k$. Let  $J_2,J_6,J_8,J_{10}\in k$ be Igusa's invariants
    attached to $C$. 
The following properties hold true. 
  \begin{enumerate}
  \item [(i)]  
For any
embedding $\sigma:k\rightarrow\IC$
let $Z_\sigma$ be a period matrix
coming from a restricted Weierstrass model of ${C}\otimes_{\sigma}\IC$
as in Section \ref{sec:modelsgen2}, then $\chi_{10}(Z_\sigma)\not=0$.
Moreover, we have  

    \begin{tabular}{ll}
    $\displaystyle{  h(\jac{C}) = }$ & $\displaystyle{ \frac{1}{[k:\IQ]}\Biggl(\frac{1}{60}\sum_{\nu\in{M_k^{0}}}
      \frac{d_\nu}{\iota(\nu)} 
 \log \max\left\{1,\left| J_{10}^{-\iota(\nu)} J_{2\iota(\nu)}^5\right|_\nu\right\}
 }$ \\
$$& $\displaystyle{ \quad \quad \quad \quad-
\frac{1}{10} \sum_{\sigma:k\rightarrow\IC}\log\left(2^{8}\pi^{10} |\chi_{10}( Z _{\sigma})|\det(\imagS Z _{\sigma})^{5}\right)\Biggr).}$\\
    \end{tabular}
  \item[(ii)] Let $\nu$ be a finite place of $k$, then
\begin{equation*}
  {\rm ord}_\nu\Delta^0_{\rm min}(C) = \frac{1}{\iota(\nu)} \max\left\{0, - {\rm ord}_\nu (J_{10}^{-\iota}
  J_{2\iota}^{5}) \right\}. 
\end{equation*}
    \end{enumerate}    
\end{thm}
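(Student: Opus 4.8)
I would compute $h(\jac{C})$ from the metrised Hodge bundle $\overline\omega$ on $S=\Spec{\mathcal O}_k$, $\omega=\varepsilon^{*}\Omega^{2}_{\mathcal A/S}$, attached to the N\'eron model $\mathcal A\to S$ of $\jac{C}$ — an abelian scheme, since $\jac{C}$ has good reduction everywhere. The hypothesis that $C(k)$ contains a Weierstrass point provides, via Lockhart's Proposition~1.2 \cite{Lock}, a restricted Weierstrass model $\mathcal E\colon y^{2}=P(x)$ over $k$ with $P$ monic of degree $5$, hence a distinguished generator $\mu_{\mathcal E}=\frac{dx}{y}\wedge\frac{x\,dx}{y}$ of $\omega$ on the generic fibre, defined over $k$. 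Then $[k:\IQ]\,h(\jac{C})=\widehat{\deg}(\overline\omega)=\sum_{\nu}\bigl(-\log\Vert\mu_{\mathcal E}\Vert_{\nu}\bigr)$, the sum over all places of $k$, with the $L^{2}$-norm (normalised by $c_{0}=2\pi$) at the archimedean places and the N\'eron norm at the finite ones. I would then treat the two groups of places separately, proving part~(ii) first as an independent local statement and feeding it into the finite part of~(i).

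\textbf{The archimedean places.} Fix $\sigma\colon k\to\IC$. A direct computation expresses $\Vert\mu_{\mathcal E}\Vert_{\sigma}^{2}$ through the covolume of the period lattice $\Lambda_{\sigma}$ that Lockhart attaches to $\mathcal E$, hence through $\det(\imagS Z_{\sigma})$ and the determinant of the matrix of $A$-periods. The classical identity of Igusa (a Thomae-type product formula; see also \cite{Lock}) writes the discriminant $\Delta_{\mathcal E}$ of $\mathcal E$ as $\chi_{10}(Z_{\sigma})$ times an explicit power of this period determinant and of $2\pi$. Eliminating the periods yields a relation of the shape
\[
\Vert\mu_{\mathcal E}\Vert_{\sigma}=\left(\frac{2^{8}\pi^{10}\,|\chi_{10}(Z_{\sigma})|\,\det(\imagS Z_{\sigma})^{5}}{|\sigma(\Delta_{\mathcal E})|}\right)^{1/10},
\]
the universal constant being pinned down by the choice $c_{0}=2\pi$; this gives the archimedean part of~(i). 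That $\chi_{10}(Z_{\sigma})\neq 0$ is precisely the statement that the principally polarised abelian surface $\jac{C\otimes_{\sigma}\IC}$ is not a product of two elliptic curves with the product polarisation — the vanishing divisor of $\chi_{10}$ on $\IH_{2}$ — which holds because $C\otimes_{\sigma}\IC$ is a smooth curve; this is the first assertion of~(i). Finally the product formula converts $\tfrac1{10}\sum_{\sigma}\log|\sigma(\Delta_{\mathcal E})|$ into $\tfrac1{10}\sum_{\nu\in M_k^{0}}{\rm ord}_{\nu}(\Delta_{\mathcal E})\log N\nu$, to be absorbed into the finite places.

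\textbf{The finite places and part~(ii).} At $\nu\in M_k^{0}$ one compares $\mu_{\mathcal E}$ with a generator of $\omega$ over $\mathcal O_{k,\nu}$. Passing to a minimal integral Weierstrass equation at $\nu$ in the sense of Liu \cite{Liu:stables,Liu:cond} changes $\mu_{\mathcal E}$ by a factor whose $\nu$-valuation is read off from the transformation rule $(**)$; the residual discrepancy between the minimal Weierstrass differential and a N\'eron differential is controlled because $\jac{C}$ has good reduction, using Liu's comparison of the minimal regular model of $C$ with the N\'eron model of $\jac{C}$ together with Saito's generalisation \cite{Saito:Duke88} of Ogg's conductor formula, whose conductor term vanishes here, so that the whole contribution is of purely geometric origin and measures bad stable reduction of $C$. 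Combining these valuations with the $\Delta_{\mathcal E}$-term from the previous paragraph shows that the total finite contribution equals $\tfrac1{60}\log N\bigl(\Delta^{0}_{\min}(C)\bigr)$, which by~(ii) is the displayed sum over finite places in~(i). For~(ii) itself I would invoke Igusa's analysis \cite{Igusa:60} of the minimal Weierstrass model via the weighted transformation of $(J_{2},J_{6},J_{8},J_{10})$ under $(**)$: $J_{2i}$ carries weight $2i$ and $\Delta_{\mathcal E}$ weight $10$, so the combination $J_{10}^{-\iota(\nu)}J_{2\iota(\nu)}^{5}$ has weight $0$ and is therefore the same for every Weierstrass model of $C$; the minimality condition then forces its $\nu$-valuation to equal $-\iota(\nu)\,{\rm ord}_{\nu}\Delta^{0}_{\min}(C)$ in the bad-reduction case and to be non-negative otherwise, which is exactly the claimed identity, the $\max\{0,\cdot\}$ absorbing the good-reduction case. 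The integer $\iota(\nu)$ enters because in residue characteristics $2$ and $3$ the admissible normalisations in $(**)$ keeping the equation integral are coarser — one must work through $4P+Q^{2}$, respectively control the cubic part $Q$ — so the weight that can be removed moves in steps of $\iota(\nu)$ rather than of $1$.

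\textbf{Main obstacle.} The delicate point is the finite-place identification at places above $2$ and $3$: reconciling Liu's naive minimal discriminant with the N\'eron-model data and with Saito's formula, and pinning down the rational constant $\tfrac1{60}$, requires a case analysis along the reduction types in the Namikawa--Ueno/Liu classification \cite{NamikawaUeno,Liu:cond}. The archimedean side is conceptually routine, but correctly carrying the constant $2^{8}\pi^{10}$ through Lockhart's normalisation of periods and theta constants, and through the choice $c_{0}=2\pi$, also demands care.
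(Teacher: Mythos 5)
Your overall strategy is the paper's: compute $\widehat{\deg}$ of the Hodge bundle using a Weierstrass differential, handle the archimedean places through Lockhart's period/theta identity (which is exactly how the paper gets the constant $2^8\pi^{10}$ and the non-vanishing of $\chi_{10}(Z_\sigma)$ in Proposition \ref{hyperelliptique infinie}), and handle the finite places through Liu and Saito. The only cosmetic difference is bookkeeping: the paper works with the model-independent section $\magicsection=\Delta_{\mathcal E}^2(\omega_1\wedge\omega_2)^{\otimes 20}$ and locally minimal equations, whereas you use one global section plus the product formula on $\Delta_{\mathcal E}$; these are equivalent. The archimedean half of your sketch is therefore sound.

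The genuine gap is at the finite places, and in particular in your argument for (ii). The identity ${\rm ord}_\nu\Delta^0_{\rm min}(C)=\frac{1}{\iota(\nu)}\max\{0,-{\rm ord}_\nu(J_{10}^{-\iota}J_{2\iota}^5)\}$ is \emph{not} a consequence of the weight-$0$ property of $J_{10}^{-\iota}J_{2\iota}^5$ together with "minimality": the weight argument only shows this quantity is model-independent, and says nothing about which valuation the naive minimal discriminant actually attains. What the paper needs (Proposition \ref{prop:nonarch}) is: (a) the hypothesis that $\jac{C}$ has good reduction at $\nu$, fed through Deligne--Mumford and Raynaud, to force the stable fibre to be either smooth or two elliptic curves meeting at one point, i.e.\ type [$I_0$-$I_0$-$m$] of \cite{NamikawaUeno}; (b) Liu's formula $m=\frac{1}{12\iota}{\rm ord}_\nu(J_{10}^{\iota}J_{2\iota}^{-5})$ for the thickness of the node \cite{Liu:stables}; (c) the Artin-conductor computation via Saito \cite{Saito:Duke88}, giving ${\rm ord}_\nu\Delta_{\rm min}=-{\rm Art}+m=2m$ for Liu's minimal discriminant; and (d) Liu's Th\'eor\`eme 2 \cite{Liu:cond}, which says the \emph{naive} minimal discriminant of Definition \ref{Liu} differs from Liu's by exactly $10m$, so ${\rm ord}_\nu\Delta^0_{\rm min}(C)=12m$. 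Step (d) is invisible in your sketch, yet it is precisely what turns the local contribution ${\rm ord}(\magicsection)=4m$ into the coefficient $\frac{1}{60}\log\inorm{\Delta^0_{\rm min}(C)}$ in (i); without it neither the constant $1/60$ nor identity (ii) is established, and your reduction "the total finite contribution equals $\frac{1}{60}\log N(\Delta^0_{\min}(C))$, which by (ii)\ldots" is circular. Your proposed explanation of where $\iota(\nu)$ comes from (coarser admissible changes of variables in residue characteristics $2$ and $3$) is also not the actual mechanism: $\iota(\nu)$ enters through Liu's valuation criteria, i.e.\ which invariant $J_{2\iota}$ is a unit at a place of potentially good reduction and the exponent in the thickness formula, not through a restriction on the transformations $(**)$. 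You do flag the finite-place reconciliation as the "main obstacle", but as written the argument for (ii), and hence for the finite part of (i), is missing its essential inputs.
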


We will prove this theorem after some preliminary work. 
But first we state an immediate corollary. 

\begin{cor}
\label{cor:faltingsheight}
  Let $C,k,$ and the $Z_\sigma$ be as in Theorem \ref{hyperelliptic}, then
  \begin{equation*}
    h(\jac{C}) = \frac{1}{[k:\IQ]}\left(\frac{1}{60}  \log \inorm{
{\Delta}^0_{\rm min}(C)} - \frac{1}{10} \sum_{\sigma:k\rightarrow\IC}
\log\left(2^{8} \pi^{10} |\chi_{10}(Z_\sigma)| \det \imag{Z_\sigma}^5\right)\right).
  \end{equation*}
\end{cor}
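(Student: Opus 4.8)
To deduce the corollary I would simply feed the local formula of Theorem \ref{hyperelliptic}(ii) into the height formula of Theorem \ref{hyperelliptic}(i). The archimedean contribution $-\frac{1}{10}\sum_{\sigma:k\rightarrow\IC}\log\big(2^{8}\pi^{10}|\chi_{10}(Z_\sigma)|\det\imag{Z_\sigma}^5\big)$ occurs verbatim in both expressions, so the only thing to check is the identity
\[
\sum_{\nu\in M_k^0}\frac{d_\nu}{\iota(\nu)}\,\log\max\big\{1,\big|J_{10}^{-\iota(\nu)}J_{2\iota(\nu)}^5\big|_\nu\big\}=\log\inorm{\Delta^0_{\rm min}(C)}.
\]

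First I would fix a finite place $\nu\mid p$ of $k$ and set $a_\nu=J_{10}^{-\iota(\nu)}J_{2\iota(\nu)}^5\in k$. If $J_{2\iota(\nu)}=0$ then $a_\nu=0$, the $\nu$-summand on the left is $\log\max\{1,0\}=0$, and Theorem \ref{hyperelliptic}(ii) (read with ${\rm ord}_\nu(0)=+\infty$) gives ${\rm ord}_\nu\Delta^0_{\rm min}(C)=0$, so this place contributes nothing on either side; thus I may assume $a_\nu\in k^\times$. By the normalisation of $|\cdot|_\nu$ adopted in Section \ref{sec:notation} --- its restriction to $\IQ$ is the standard $p$-adic absolute value --- one has $|a_\nu|_\nu=p^{-{\rm ord}_\nu(a_\nu)/e_\nu}$, where $e_\nu$ is the ramification index of $\nu$ over $p$, so that $\log\max\{1,|a_\nu|_\nu\}=\tfrac{\log p}{e_\nu}\max\{0,-{\rm ord}_\nu(a_\nu)\}$. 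Multiplying by $d_\nu=e_\nu f_\nu$ (with $f_\nu$ the residue degree) and using $\log\inorm{\mathfrak p_\nu}=f_\nu\log p$, where $\mathfrak p_\nu\subseteq\O{k}$ is the prime corresponding to $\nu$, I would obtain
\[
d_\nu\log\max\{1,|a_\nu|_\nu\}=\max\{0,-{\rm ord}_\nu(a_\nu)\}\,\log\inorm{\mathfrak p_\nu}.
\]
Now Theorem \ref{hyperelliptic}(ii) says exactly $\max\{0,-{\rm ord}_\nu(a_\nu)\}=\iota(\nu)\,{\rm ord}_\nu\Delta^0_{\rm min}(C)$, so dividing by $\iota(\nu)$ yields $\frac{d_\nu}{\iota(\nu)}\log\max\{1,|a_\nu|_\nu\}={\rm ord}_\nu(\Delta^0_{\rm min}(C))\log\inorm{\mathfrak p_\nu}$ for every $\nu\in M_k^0$.

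It then remains to sum over all finite places: only finitely many terms survive because $\Delta^0_{\rm min}(C)$ is a genuine ideal of $\O{k}$ (Definition \ref{Liu}), and multiplicativity of the norm together with the factorisation $\Delta^0_{\rm min}(C)=\prod_{\nu\in M_k^0}\mathfrak p_\nu^{{\rm ord}_\nu(\Delta^0_{\rm min}(C))}$ turns $\sum_\nu{\rm ord}_\nu(\Delta^0_{\rm min}(C))\log\inorm{\mathfrak p_\nu}$ into $\log\inorm{\Delta^0_{\rm min}(C)}$; this is the displayed identity, and substituting it into Theorem \ref{hyperelliptic}(i) completes the proof. I do not expect any real obstacle here: the argument is purely local-to-global, and the only points demanding care are the bookkeeping of the $p$-adic normalisation --- so that the factors $e_\nu$, $f_\nu$, $d_\nu$ cancel correctly and $|\cdot|_\nu$ is matched with the norm of $\mathfrak p_\nu$ --- and checking that the degenerate case $J_{2\iota(\nu)}=0$ is harmless.
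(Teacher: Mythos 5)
Your proposal is correct and follows exactly the route the paper intends: the corollary is stated there as an immediate consequence of Theorem \ref{hyperelliptic}, obtained by substituting part (ii) into part (i) and converting the sum $\sum_\nu {\rm ord}_\nu(\Delta^0_{\rm min}(C))\log\inorm{\mathfrak p_\nu}$ into $\log\inorm{\Delta^0_{\rm min}(C)}$ by multiplicativity of the norm. Your extra care with the normalisation of $|\cdot|_\nu$, the factor $d_\nu=e_\nu f_\nu$, and the degenerate case $J_{2\iota(\nu)}=0$ is exactly the bookkeeping the paper leaves implicit.
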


Suppose $C$ is a curve of genus $2$ defined over a
number field $k$ and presented by a Weierstrass equation 
as in Definition \ref{Liu}.
There exists a classical basis for 
$H^{0}(C,\Omega^{1}_{C/k})$ given by
\begin{equation*}
  \omega_1 = \frac{dx}{2y+Q(x)}\quad\text{and}\quad
\omega_{2}=\frac{xdx}{2y+Q(x)}.
\end{equation*}
Consider the section $\omega_{1}\wedge \omega_{2}\in \det {H^{0}(C,\Omega^{1}_{C/k})}$. A change of variables in
the 
Weierstrass models of $C$ leaves
\begin{equation}
\label{eq:magicsection}
  \magicsection=\Delta_{\mathcal{E}}^{{2}}(\omega_{1}\wedge\omega_{2})^{\otimes
  20}
\end{equation}
invariant, \textit{cf.} Section
1.3 \cite{Liu:cond}.


We now show how to use $\magicsection$, a differential form on the
curve $C$,  to compute the Faltings height
of the jacobian $\jac{C}$. 

Suppose $p:\mathcal{C}\rightarrow S$ is a regular semi-stable model of $C$ over
$S=\Spec\mathcal{O}_{k}$.
We now prove 
\begin{equation}
\label{eq:hjacCequality}
h(\jac{C})=\frac{1}{[k:\mathbb{Q}]}\widehat{\deg} (\det p_{*}\omega_{\mathcal{C}/S}),  
\end{equation}
where $\omega_{\mathcal C/S}$ denote the relative canonical bundle and
where  the hermitian metrics on $\det p_* \omega_{\mathcal
  C/S}$ are determined by
\begin{equation}
\label{def:archmetric}
\Vert\omega_1\wedge\omega_2\Vert_\sigma^2=
\det\left(\frac{i}{2\pi} \int_{(C\otimes_{\sigma}\IC)(\IC)}
\omega_l\wedge\overline{\omega_m}\right)_{1\le l,m\le 2}
\end{equation}
where $\sigma:k\rightarrow\IC$ denotes an embedding. 
Indeed, suppose $\varepsilon$ is a section of
$\mathcal{C}\rightarrow S$ and let $\Pic^{0}_{\mathcal
  C/S}$ be the relative Picard scheme of degree $0$.
Then $\Pic^{0}_{\mathcal C/S}$ is the identity component of the
N\'eron model $\mathcal A$ of the jacobian of $C$ by
 Theorem 4, Chapter 9.5 \cite{NeronModels}. 
This is an open subscheme of $\mathcal A$ which contains
the image of $\varepsilon$. Therefore, 
$\varepsilon^* \Omega_{\Pic^0_{\mathcal C/S}/S} = 
\varepsilon^* \Omega_{\mathcal A/S}$ which allows us to replace $\mathcal A$ by
$\Pic^0_{\mathcal C/S}$ in the computations below. 
Then
$$\Lie(\mathcal A)\simeq R^{1}p_{*}\mathcal{O}_{\mathcal{C}}.$$
Moreover by Grothendieck duality (see 6.4.3 page 243 of \cite{Liu3})
we have
$$(R^{1}p_{*}{\mathcal O}_{\mathcal{C}})^{\vee}\simeq
p_{*}\omega_{\mathcal{C}/S}. $$
Then
$$\varepsilon^{*}\Omega^{1}_{\mathcal A/S}\simeq \Lie(\mathcal A)^{\vee}\simeq
p_{*}\omega_{\mathcal{C}/S}, $$
hence
$$\varepsilon^{*}\Omega^{2}_{\mathcal A/S}\simeq \det
p_{*}\omega_{\mathcal{C}/S},$$ 
which turns out to be an isometry
  by 4.15 of the second lecture of \cite{SPA}.
We conclude (\ref{eq:hjacCequality}) by taking the Arakelov degree.

\subsubsection{Archimedian places}

We use Lockhart \cite{Lock} and Mumford \cite{Mum2} as references for these places. 
If $T$ is a subset of $\{1,2,3,4,5\}$ one then defines
$\displaystyle{m_{T}=\sum_{i\in{T}}m_{i}\in{\frac{1}{2}\mathbb{Z}^{4}}}$
with
\begin{equation*}
  m_1 = \left[
    \begin{array}{c}
       1/2 \\ 0 \\ 0 \\0
    \end{array}
\right],
  m_2 = \left[
    \begin{array}{c}
       1/2 \\ 0 \\ 1/2 \\0
    \end{array}
\right],
  m_3 = \left[
    \begin{array}{c}
       0 \\ 1/2 \\ 1/2 \\0
    \end{array}
\right],
  m_4 = \left[
    \begin{array}{c}
      0 \\ 1/2 \\ 1/2 \\ 1/2
    \end{array}
\right],
  m_5 = \left[
    \begin{array}{c}
       0 \\ 0 \\ 1/2 \\1/2
    \end{array}
\right].
\end{equation*}
We follow Lockhart's definition 3.1  \cite{Lock} and set
\begin{equation}\label{phitheta}
\Theta(Z)=\prod_{\substack{T\subseteq\{1,2,3,4,5\} \\ \#T=3}}\theta_{m_{T\circ \{1,3,5\}}}(0, Z)^8.
\end{equation}
where $\circ$ denotes the symmetric difference of sets
and  $\theta_m$ are theta functions from Section \ref{sec:hyperelliptic}. 
Observe that $\Theta(Z) = \chi_{10}(Z)^4$
and so $Z\mapsto |\Theta(Z)| \det\imag{Z}^{20}$ is an 
$\SP{4}{\IZ}$-invariant function on $\IH_2$. 

 The following result is Lockhart's Proposition 3.3 \cite{Lock}. In his
 notation we have
 $r=\binom{5}{3}=10$ and $n=\binom{4}{3}=4$. 
\begin{prop}
\label{prop:lockhart}
Let $C$ be a curve of genus $2$ defined
over $\IC$
and suppose
 $\mathcal E$ is a restricted Weierstrass equation 
for $C$.
One has the uniformisation $\jac{C}(\mathbb{C})\simeq
\mathbb{C}^{2}/\Lambda_{\mathcal E}$ with the lattice
$\Lambda_{\mathcal E}\subset\IC^2$, its  period matrix $Z_{\mathcal
  E}\in\IH_2$ and
 covolume  $V(\Lambda_{\mathcal E})$, both
 as near the beginning of Section \ref{sec:modelsgen2}. Then $|\Delta_{\mathcal
   E}|V(\Lambda_{\mathcal E})^{5}$ is independant of the equation
 $\mathcal E$ and 
$$|\Delta_{\mathcal E}|V(\Lambda_{\mathcal
   E})^{5}=2^{8}\pi^{20}\Big(|\Theta(Z_{\mathcal E})|
\det\imag{Z_{\mathcal E}}^{20}\Big)^{\frac{1}{4}}. $$
\end{prop}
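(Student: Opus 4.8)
The plan is to derive this as a Thomae-type identity, using the classical Thomae formula for the even theta constants of a hyperelliptic curve together with a combinatorial bookkeeping over the ten even characteristics. Fix the restricted Weierstrass equation $\mathcal E\colon y^2=P(x)$ with $P$ monic of degree $5$ and roots $a_1,\dots,a_5\in\IC$, and adjoin the sixth branch point $a_6=\infty$. Since $\mathcal E$ is restricted, the ten even theta characteristics of $\jac{C}$ correspond bijectively to the ten partitions $\{U,U^c\}$ of $\{a_1,\dots,a_6\}$ into two triples, and Lockhart's enumeration $T\mapsto m_{T\circ\{1,3,5\}}$ (with $T$ ranging over the $3$-subsets of $\{1,\dots,5\}$) realises exactly this bijection; spelling out this matching is the first step. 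Let $A=\bigl(\oint_{\gamma_l}x^{m-1}\,dx/y\bigr)_{1\le l,m\le 2}$ be the $A$-period matrix attached to $\mathcal E$, so that $Z_{\mathcal E}\in\IH_2$ is its normalisation and $\Lambda_{\mathcal E}$ is spanned by the four periods. Computing the real $4\times4$ determinant gives
\begin{equation*}
V(\Lambda_{\mathcal E})=|\det A|^{2}\det\imag{Z_{\mathcal E}}
\end{equation*}
up to a fixed power of $2$ coming from the chosen normalisation of the periods, which I absorb into the constants below.

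The core input is Thomae's formula. Writing $d(W)=\prod_{i<j,\ a_i,a_j\in W\setminus\{\infty\}}(a_i-a_j)$ for a subset $W\subseteq\{a_1,\dots,a_6\}$, it asserts that the theta constant attached to the partition $\{U,U^c\}$ satisfies
\begin{equation*}
\theta_m(0,Z_{\mathcal E})^{4}=\pm\,\kappa\,(\det A)^{2}\,d(U)\,d(U^c),
\end{equation*}
where $\kappa$ is a universal constant (a power of $2$ times a power of $\pi$), the differences involving $a_6=\infty$ being absent precisely because $P$ is monic. Passing to absolute values removes the sign; since $\Theta=\prod_{\{U,U^c\}}\theta_m^{8}$, multiplying over all ten partitions yields
\begin{equation*}
|\Theta(Z_{\mathcal E})|=|\kappa|^{20}\,|\det A|^{40}\,\Bigl|\ \prod_{\{U,U^c\}} d(U)\,d(U^c)\ \Bigr|^{2}.
\end{equation*}

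Now the combinatorics: a fixed pair $\{a_i,a_j\}$ of finite branch points lies in a common triple of a partition exactly $\binom{4}{1}=4$ times, so each difference $(a_i-a_j)$ occurs with total multiplicity $4$ in $\prod_{\{U,U^c\}} d(U)\,d(U^c)$, whence $\prod_{\{U,U^c\}} d(U)\,d(U^c)=\prod_{1\le i<j\le5}(a_i-a_j)^{4}=\disc_5(P)^{2}$ and therefore $|\Theta(Z_{\mathcal E})|=|\kappa|^{20}|\det A|^{40}|\disc_5(P)|^{4}$. Combining this with $V(\Lambda_{\mathcal E})=|\det A|^{2}\det\imag{Z_{\mathcal E}}$ gives
\begin{equation*}
|\Theta(Z_{\mathcal E})|\det\imag{Z_{\mathcal E}}^{20}=|\kappa|^{20}|\disc_5(P)|^{4}\,V(\Lambda_{\mathcal E})^{20},
\end{equation*}
and taking fourth roots and using $\Delta_{\mathcal E}=2^{8}\disc_5(P)$ produces
\begin{equation*}
|\Delta_{\mathcal E}|\,V(\Lambda_{\mathcal E})^{5}=\frac{2^{8}}{|\kappa|^{5}}\bigl(|\Theta(Z_{\mathcal E})|\det\imag{Z_{\mathcal E}}^{20}\bigr)^{1/4}.
\end{equation*}
It then remains to evaluate the universal constant, i.e.\ to check that $2^{8}/|\kappa|^{5}=2^{8}\pi^{20}$; equivalently, with the normalisation power of $2$ absorbed, that $|\kappa|=\pi^{-4}$. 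I expect this to be the main obstacle: it demands fixing once and for all Lockhart's normalisation of the periods attached to $\mathcal E$ and the exact constant in Thomae's formula, or else evaluating both sides on one explicit curve with computable periods (for instance $y^{2}=x^{5}-x$, whose periods are expressible through the Beta function) in order to pin the constant down once the proportionality $|\Delta_{\mathcal E}|\,V(\Lambda_{\mathcal E})^{5}\propto\bigl(|\Theta|\det\imag{\cdot}^{20}\bigr)^{1/4}$ is in hand. Finally, the asserted independence of $|\Delta_{\mathcal E}|\,V(\Lambda_{\mathcal E})^{5}$ from $\mathcal E$ is then immediate: the right-hand side equals $2^{8}\pi^{20}|\chi_{10}(Z_{\mathcal E})|\det\imag{Z_{\mathcal E}}^{5}$, the map $Z\mapsto|\chi_{10}(Z)|\det\imag{Z}^{5}$ is $\SP{4}{\IZ}$-invariant, and any two restricted Weierstrass models of $C$ yield period matrices in a single $\SP{4}{\IZ}$-orbit, since both present $\jac{C}$ with its canonical principal polarisation.
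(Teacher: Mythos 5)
The paper's own proof of this proposition is a one-line citation of Lockhart's Proposition 3.3 (with $r=\binom{5}{3}=10$ and $n=\binom{4}{3}=4$ in his notation), so your proposal differs from the paper only in that it re-derives the cited result rather than quoting it; the Thomae-formula mechanism you use is in fact the one underlying Lockhart's own proof. Your bookkeeping is sound: the ten characteristics $m_{T\circ\{1,3,5\}}$ do correspond to the ten partitions of the six branch points into triples (the triple avoiding $\infty$ being a $3$-subset of the finite roots), the covolume identity $V(\Lambda_{\mathcal E})=|\det A|^{2}\det\imag{Z_{\mathcal E}}$ is correct, and each difference $a_i-a_j$ occurs exactly four times in the product over the ten partitions, giving $\disc_{5}(P)^{2}$ and hence the proportionality $|\Delta_{\mathcal E}|V(\Lambda_{\mathcal E})^{5}\propto\bigl(|\Theta(Z_{\mathcal E})|\det\imag{Z_{\mathcal E}}^{20}\bigr)^{1/4}$. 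The independence of the model then follows, as you say, from $\Theta=\chi_{10}^{4}$ together with the $\SP{4}{\IZ}$-invariance of $|\chi_{10}(Z)|\det\imag{Z}^{5}$ and the fact that all restricted models give symplectically equivalent period matrices.

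The one point your plan leaves open, and which you rightly flag, is the constant $2^{8}\pi^{20}$. It is not cosmetic: it enters the archimedean term of Theorem \ref{hyperelliptic} and hence the height computations, and the powers of $2$ you propose to ``absorb'' come from several independent conventions that must be reconciled at the end (Lockhart's periods are those of $dx/y$ and $x\,dx/y$ over a symplectic homology basis, his discriminant is $2^{8}\disc_{5}(P)$, and the exact constant in Thomae's formula depends on whether one normalises by $\det A$ or by $\det\bigl(A/(2\pi i)\bigr)$). Either of your remedies closes this: tracking the precise constant in a fixed reference statement of Thomae's formula, or, since the constant is a priori of the form $2^{a}\pi^{b}$, calibrating the proportionality on one explicit curve with controlled error --- for instance $y^{2}=x^{5}-1$, treated in Example 1 of the appendix, whose periods are expressible through the Gamma function --- provided the calibration is carried out in Lockhart's normalisation and not with the paper's rescaled differentials $dx/(2y)$, $x\,dx/(2y)$. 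With that constant pinned down, your argument is a complete and self-contained substitute for the citation.
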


 Next comes the archimedian contribution of the  section $\magicsection$ from
(\ref{eq:magicsection}).

\begin{prop}\label{hyperelliptique infinie}
Let $C$ be a curve of genus $2$ defined
over a number field $k$.
Let $\sigma:k\rightarrow \IC$ be an embedding and suppose
 $\mathcal E$ is a restricted Weierstrass equation 
for $C\otimes_\sigma \IC$.
We write $\omega_1 = dx/(2y), \omega_2 = xdx/(2y),$
and 
 $\magicsection=\Delta_{\mathcal E}^{2}(\omega_{1}\wedge\omega_{2})^{\otimes 20}$.
Then $\chi_{10}(Z_{\mathcal E})\not=0$ and 
$$\log\|\magicsection\|_{\sigma}=2\log\left(2^{8}\pi^{10}
|\chi_{10}(Z_{\mathcal E})|\det\imag{Z_{\mathcal E}}^{5}\right).$$ 
\end{prop}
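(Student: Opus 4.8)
The plan is to relate the $L^2$-norm $\|\magicsection\|_\sigma$ directly to the period data $\Delta_{\mathcal E}$ and $V(\Lambda_{\mathcal E})$, and then invoke Proposition \ref{prop:lockhart} to re-express everything in terms of $\chi_{10}$. First I would unwind the definition of the metric. By the isometry \eqref{def:archmetric}, we have
\begin{equation*}
\|\omega_1\wedge\omega_2\|_\sigma^2 = \det\left(\frac{i}{2\pi}\int_{(C\otimes_\sigma\IC)(\IC)} \omega_l\wedge\overline{\omega_m}\right)_{1\le l,m\le 2}.
\end{equation*}
The first key step is to compare this Gram determinant on the curve with the covolume of the period lattice $\Lambda_{\mathcal E}\subset\IC^2$. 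Writing $\Pi$ for the $2\times 4$ period matrix of $(\omega_1,\omega_2)$ against a symplectic basis of $H_1(C,\IZ)$, the classical computation (Riemann bilinear relations, as in Mumford \cite{Mum2}) gives that $\det\left(i\int\omega_l\wedge\overline{\omega_m}\right)$ equals a fixed power of $2$ times $V(\Lambda_{\mathcal E})$, since the lattice $\Lambda_{\mathcal E}$ is precisely the image of $H_1$ under $(\omega_1,\omega_2)$. Thus $\|\omega_1\wedge\omega_2\|_\sigma^2 = (2\pi)^{-2} c\, V(\Lambda_{\mathcal E})$ for an explicit absolute constant $c$; tracking Lockhart's normalisation on page 740 of \cite{Lock} pins down $c$.

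The second step is purely formal: since $\magicsection = \Delta_{\mathcal E}^2(\omega_1\wedge\omega_2)^{\otimes 20}$, we get
\begin{equation*}
\|\magicsection\|_\sigma = |\Delta_{\mathcal E}|^2 \cdot \|\omega_1\wedge\omega_2\|_\sigma^{20} = |\Delta_{\mathcal E}|^2 \left((2\pi)^{-2} c\, V(\Lambda_{\mathcal E})\right)^{10},
\end{equation*}
so that $\log\|\magicsection\|_\sigma = 2\log\left(|\Delta_{\mathcal E}| V(\Lambda_{\mathcal E})^5\right) + (\text{constant})$. Now I would substitute Proposition \ref{prop:lockhart}, which says $|\Delta_{\mathcal E}| V(\Lambda_{\mathcal E})^5 = 2^8\pi^{20}\bigl(|\Theta(Z_{\mathcal E})|\det\imag{Z_{\mathcal E}}^{20}\bigr)^{1/4}$. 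Using $\Theta = \chi_{10}^4$ (noted just after \eqref{phitheta}), the right-hand side becomes $2^8\pi^{20}|\chi_{10}(Z_{\mathcal E})|\det\imag{Z_{\mathcal E}}^5$. The non-vanishing $\chi_{10}(Z_{\mathcal E})\neq 0$ is immediate from Proposition \ref{prop:lockhart}: the left-hand side $|\Delta_{\mathcal E}|V(\Lambda_{\mathcal E})^5$ is a product of a nonzero discriminant and a positive covolume, hence nonzero. Assembling the two computations and checking that all the spurious powers of $2$ and $\pi$ from the two normalisation conventions cancel exactly, one lands on
\begin{equation*}
\log\|\magicsection\|_\sigma = 2\log\left(2^8\pi^{10}|\chi_{10}(Z_{\mathcal E})|\det\imag{Z_{\mathcal E}}^5\right).
\end{equation*}

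The main obstacle I anticipate is bookkeeping the constants: there are at least three places where normalisation conventions intervene — the factor $c_0 = 2\pi$ fixed in Definition \ref{FaltHeight} (appearing as $i/2\pi$ in \eqref{def:archmetric}), Lockhart's specific choice of period lattice and the factor $2^8\pi^{20}$ in Proposition \ref{prop:lockhart}, and the exponent $20$ tying $\magicsection$ to the weight-$10$ form $\chi_{10}$. The exponent $2$ on $\Delta_{\mathcal E}$ and the exponent $20$ on $\omega_1\wedge\omega_2$ are dictated precisely so that the final answer matches the modular form of weight $10$ (since $\chi_{10}$ has weight $10$ and $\Theta = \chi_{10}^4$ has weight $40$, matching the $\otimes 20$ and the factor $1/4$). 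I would verify this numerically against the appendix computations as a sanity check, but the genuine content is just Proposition \ref{prop:lockhart} combined with the Riemann bilinear relations; everything else is unwinding definitions.
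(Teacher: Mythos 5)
Your proposal is correct and follows essentially the same route as the paper: unwind the metric \eqref{def:archmetric}, use the classical identity $\det\bigl(i\int\omega_l\wedge\overline{\omega_m}\bigr)=2^2\,V(\Lambda_{\mathcal E})$ (the paper cites Griffiths--Harris, Chapter 2.2, so your unspecified constant is $c=4$), then substitute Proposition \ref{prop:lockhart} together with $\Theta=\chi_{10}^4$, and read off the non-vanishing of $\chi_{10}(Z_{\mathcal E})$ from the resulting identity. The constants do cancel exactly as you predict, yielding $\|\magicsection\|_{\sigma}=2^{16}\pi^{20}|\chi_{10}(Z_{\mathcal E})|^{2}\det\imag{Z_{\mathcal E}}^{10}$, which is the paper's computation.
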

\begin{proof}
We use Proposition \ref{prop:lockhart} to compute
\\

\begin{tabular}{lll}

$\|\magicsection\|_{\sigma}^{2}$ & $=$ & $\displaystyle{|\Delta_{\mathcal E}|_{\sigma}^{4}(\|\omega_{1}\wedge \omega_{2}\|_{\sigma}^{2})^{20}}$\\
\\

$$ & $=$ & $\displaystyle{|\Delta_{\mathcal
    E}|_{\sigma}^{4}
 \det\left(\frac{i}{2\pi}
  \int_{(C\otimes_{\sigma}\IC)(\IC)}\omega_l\wedge\overline{\omega_m}\right)^{20}_{1\le
    l,m\le 2}}$
\\
\\
$$ & $=$ & $\displaystyle{\frac{1}{\pi^{40}}|\Delta_{\mathcal E}|_{\sigma}^{4}V(\Lambda_{\mathcal E})^{20}}$\\
\\
$$ & $=$ & $\displaystyle{\frac{1}{\pi^{40}} 2^{32}\pi^{80}
  |\Theta(Z_{\mathcal E})|\det(\imagS Z_{\mathcal E})^{20}}$\\
\\

$$ & $=$ & $\displaystyle{2^{32}\pi^{40}|\chi_{10}(Z_\mathcal E)|^4\det(\imagS
Z_{\mathcal E})^{20}},$\\
\\

\end{tabular}

\noindent
here the second equality requires the definition
(\ref{def:archmetric}),
the next one is classical, \textit{cf.}
Chapter 2.2 \cite{GriffithsHarris}, 
the fourth one is Proposition \ref{prop:lockhart},
and the final one follows from observation $\Theta=\chi_{10}^4$ made above.
Hence 
$\|\magicsection\|_{\sigma}=\displaystyle{2^{16}\pi^{20}|\chi_{10}(Z_{\mathcal E})|^{2}\det(\imagS
  Z)^{10}}$ and it follows in particular that
$\chi_{10}(Z_{\mathcal E})\not=0$. 
\end{proof}

\subsubsection{Non-archimedian places}
\label{sec:nonarchplaces}
\begin{prop}
\label{prop:nonarch}
Let
$C$ be a curve of genus $2$ defined over a number
 field $k$.
Let $\nu$ be a finite place of $k$ 
at which 
$\jac{C}$ has good reduction.
If $\eta$ is as in (\ref{eq:magicsection}), then
\begin{equation*}
  {\rm ord}_\nu(\eta) = \frac{1}{3\iota}
\max\{0,{\rm ord}_\nu (J_{10}^\iota J_{2\iota}^{-5})\}
= \frac 13 {\rm ord}_\nu \Delta^0_{\rm min}(C)
\end{equation*}
where $\iota=\iota(\nu)$ is as in (\ref{eq:defiota}) and where the
$J_2,J_6,J_8,J_{10}$ are as in Theorem \ref{hyperelliptic}.
\end{prop}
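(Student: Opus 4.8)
The plan is to reduce to a computation over the completion at $\nu$ and then to establish the two displayed equalities separately. First I would replace $k$ by $k_\nu$ and work over the discrete valuation ring $R=\mathcal{O}_{k_\nu}$, with uniformiser $\pi$, residue field $\kappa$ and normalised valuation $v={\rm ord}_\nu$; the quantities ${\rm ord}_\nu(\eta)$, ${\rm ord}_\nu\Delta^0_{\rm min}(C)$, ${\rm ord}_\nu(J_{10}^\iota J_{2\iota}^{-5})$ and the integer $\iota=\iota(\nu)$ all depend only on $C\otimes_k k_\nu$. Since $\jac{C}$ has good reduction at $\nu$ it is in particular semi-stable there, so by the theorem of Deligne and Mumford \cite{DelMum} recalled above the curve $C$ itself has semi-stable reduction at $\nu$; fix a regular semi-stable model $p:\mathcal{C}\to\Spec R$, for instance the minimal regular model, whose special fibre is a reduced nodal curve and for which $\varepsilon^*\Omega^2_{\mathcal{A}/R}\cong\det p_*\omega_{\mathcal{C}/R}$ by the argument around (\ref{eq:hjacCequality}), $\mathcal{A}$ being the N\'eron model of $\jac{C}$. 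Finally fix a minimal integral Weierstrass equation $\mathcal{E}:y^2+Qy=P$ for $C$ over $R$, with discriminant $\Delta_{\mathcal{E}}$ and differentials $\omega_1,\omega_2$; by Definition \ref{Liu} one has $v(\Delta_{\mathcal{E}})={\rm ord}_\nu\Delta^0_{\rm min}(C)$, and $\eta=\Delta_{\mathcal{E}}^2(\omega_1\wedge\omega_2)^{\otimes 20}$ does not depend on this choice.

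For the second equality $\frac{1}{3\iota}\max\{0,{\rm ord}_\nu(J_{10}^\iota J_{2\iota}^{-5})\}=\frac{1}{3}{\rm ord}_\nu\Delta^0_{\rm min}(C)$ the only input is invariant theory. One first checks that $J_{10}^\iota J_{2\iota}^{-5}$ has weight $10\iota-5\cdot 2\iota=0$ under the change of variables $(**)$ of Section \ref{sec:modelsgen2}, so it is a well-defined element of $k_\nu^\times$ attached to $C$. I would then appeal to Igusa's description \cite{Igusa:60} of the ring of invariants of binary sextics and of the coarse moduli space of genus $2$ curves over $\IZ$: away from the places above $2$ and $3$, $\Delta_{\mathcal{E}}$ agrees up to a unit with $J_{10}$, the equation $\mathcal{E}$ fails to be minimal exactly when the tuple of Igusa invariants can be scaled down compatibly with their weights, and the obstruction to this is precisely the power of $\pi$ dividing $J_{10}J_2^{-5}$, which yields ${\rm ord}_\nu\Delta^0_{\rm min}(C)=\max\{0,{\rm ord}_\nu(J_{10}J_2^{-5})\}$. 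Above $2$ and $3$ this has to be corrected — there $J_8$, respectively $J_6$, plays the role of $J_2$ and one loses the factor $\iota$ — and this correction is supplied by Liu's analysis of minimal models of genus $2$ curves \cite{Liu:stables,Liu:cond}, in which ${\rm ord}_\nu\Delta^0_{\rm min}(C)$ is in fact computed for every reduction type. In particular ${\rm ord}_\nu\Delta^0_{\rm min}(C)=0$ if and only if $C$ has good reduction at $\nu$.

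For the first equality I would compute ${\rm ord}_\nu(\eta)$ by regarding $\eta$ as a rational section of the invertible sheaf $(\det p_*\omega_{\mathcal{C}/R})^{\otimes 20}$. If $\omega_1\wedge\omega_2=\pi^{\ell}s_0$ with $s_0$ a basis of the free rank-two $R$-module $p_*\omega_{\mathcal{C}/R}$, then ${\rm ord}_\nu(\eta)=2\,v(\Delta_{\mathcal{E}})+20\,\ell=2\,{\rm ord}_\nu\Delta^0_{\rm min}(C)+20\,\ell$, so everything reduces to identifying the integer $\ell$, which measures the discrepancy between the Weierstrass model $\mathcal{E}$ and the semi-stable model $\mathcal{C}$. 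To pin $\ell$ down I would use Liu's comparison \cite{Liu:cond} of the minimal Weierstrass model with the minimal regular model of a genus $2$ curve in the semi-stable case — where the latter is obtained from the former by inserting and then resolving chains of rational curves — together with the relation between ${\rm ord}_\nu\Delta^0_{\rm min}(C)$ and the self-intersection of $\omega_{\mathcal{C}/R}$, that is the Deligne discriminant $\mathfrak{d}$ of $\mathcal{C}/R$. Ogg's formula in Saito's form \cite{Saito:Duke88} then expresses $\mathfrak{d}$ through the Artin conductor $\mathfrak{f}$ of $\jac{C}$ and the combinatorics of the special fibre of $\mathcal{C}$; since $\jac{C}$ has good reduction we have $\mathfrak{f}=0$, and this is precisely what collapses these relations to $\ell=-\frac{1}{12}{\rm ord}_\nu\Delta^0_{\rm min}(C)$, giving ${\rm ord}_\nu(\eta)=\frac{1}{3}{\rm ord}_\nu\Delta^0_{\rm min}(C)$ as claimed.

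I expect the genuine difficulty to be concentrated in this last step. The invariant-theoretic identity is essentially bookkeeping with Igusa's and Liu's tables, whereas identifying $\ell$ forces one through Liu's case-by-case description of genus $2$ reduction and Saito's conductor--discriminant formula rather than any soft argument; moreover the behaviour at the places above $2$ and $3$ — the origin of the factor $\iota$ — must be tracked carefully in both halves of the proof, since there both the invariant theory and the model comparison are non-generic.
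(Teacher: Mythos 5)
Your overall toolkit is the paper's: localise at $\nu$, invoke Deligne--Mumford to get a semi-stable model, view $\magicsection$ as a rational section of $(\det p_*\omega_{\mathcal{C}/S})^{\otimes 20}$, and feed the vanishing of the conductor of $\jac{C}$ into Saito's form of Ogg's formula together with Liu's genus-$2$ results. But there is a genuine gap in how you dispose of the second equality. You claim it is ``only invariant theory'' and that, away from $2$ and $3$, Igusa's theory gives ${\rm ord}_\nu\Delta^0_{\rm min}(C)=\max\{0,{\rm ord}_\nu(J_{10}J_2^{-5})\}$ unconditionally, with $2$ and $3$ handled by a table of Liu ``computing ${\rm ord}_\nu\Delta^0_{\rm min}$ for every reduction type''. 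No such unconditional statement is true: for $y^2=x^5+p$ over $\IQ_p$ ($p\ge 7$) one has $J_2=J_4=J_6=J_8=0$, so the right-hand side is $0$, while the naive minimal discriminant has order $4$; the curve and its jacobian have only \emph{potentially} good reduction there, which is exactly the point --- the hypothesis that $\jac{C}$ has good reduction at $\nu$ is what makes the formula true, and your second paragraph never uses it. In the paper that hypothesis enters through Raynaud's theorem (the stable fibre is smooth or two elliptic curves meeting in one node, i.e.\ type [$I_0\text{-}I_0\text{-}m$] of \cite{NamikawaUeno}), through Liu's formula $m=\frac{1}{12\iota}{\rm ord}_\nu(J_{10}^\iota J_{2\iota}^{-5})$ for the thickness (Proposition 2 of \cite{Liu:stables}), and through the comparison ${\rm ord}_\nu\Delta^0_{\rm min}=12m$, which is not a lookup in Liu's papers but is obtained by combining Saito's theorem (giving ${\rm ord}_\nu\Delta_{\rm min}=-{\rm Art}+m=2m$ for Liu's minimal discriminant, since the conductor exponent vanishes) with Liu's Th\'eor\`eme 2 of \cite{Liu:cond} relating Liu's minimal discriminant to the naive one.

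The same missing comparison undermines your first half: the value $\ell=-\frac{1}{12}{\rm ord}_\nu\Delta^0_{\rm min}$ is asserted, not derived, and what the conductor--discriminant machinery with $\mathfrak{f}=0$ actually hands you is the node count $m$ (equivalently $\ell=-m$), so turning that into a statement about $\Delta^0_{\rm min}$ again requires ${\rm ord}_\nu\Delta^0_{\rm min}=12m$, i.e.\ precisely the ingredient your second paragraph was supposed to have supplied. As written the two halves each presuppose the other's missing step. The clean way out is the paper's ordering: work with the Weierstrass equation whose differentials form an $\cO$-basis of $H^0(\mathcal{C}_{\rm min},\omega_{\mathcal{C}_{\rm min}/S})$ (Proposition 2(a) of \cite{Liu:cond}), so that no discrepancy $\ell$ appears and ${\rm ord}_\nu(\magicsection)=2\,{\rm ord}_\nu\Delta_{\rm min}$ directly; compute ${\rm ord}_\nu\Delta_{\rm min}=2m$ via Saito \cite{Saito:Duke88} and Liu; identify $m$ with the Igusa-invariant expression via \cite{Liu:stables}; and only at the end convert to $\Delta^0_{\rm min}$ with Th\'eor\`eme 2 of \cite{Liu:cond}, which simultaneously yields Theorem \ref{hyperelliptic}(ii). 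Your arithmetic ($2-\frac{20}{12}=\frac13$) is consistent with this, but the content of the proposition lies in the steps you have deferred.
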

\begin{proof}  
%
%
Let $\unr{k}_\nu$
be the maximal unramified extension of $k_\nu$ inside a fixed
algebraic closure of $k_\nu$. 
This is a strictly henselian field equipped with a discrete valuation
and whose ring of integers $\cO$ has an algebraically closed 
residue field. 
Thus $\cO$ satisfies the hypothesis needed for the references below. 

Recall that $\jac{C \otimes_k \unr{k}_\nu}$ has good reduction by
hypothesis; it has in particular  semi-stable reduction. 
So the curve $C\otimes_k \unr{k}_\nu$ has semi-stable reduction by 
 Deligne and Mumford's Theorem cited in the introduction.
The minimal regular model $f:\mathcal{C}_{\rm min}\rightarrow S$ of 
$C\otimes_{k} \unr{k}_\nu$
over $S=\Spec\cO$
is semi-stable by  Theorem 10.3.34(a) 
\cite{Liu3}. The canonical model $\mathcal{C}_{st}$, obtained
via a contraction $\mathcal{C}_{\rm
  min}\rightarrow\mathcal{C}_{st}$,
 is stable by part (b) of the same theorem \textit{loc.~cit.} 
It is well-known that  exactly 7 geometric configurations can arise
for the geometric special fibre of the stable model. They are pictured in Example 10.3.6
\textit{loc.~cit.}

We infer from  a theorem of Raynaud  that 
the special fibre of $\mathcal{C}_{st}\rightarrow S$ is either smooth or a union of $2$ elliptic
curves meeting at a point, see the paragraph before Proposition 2
\cite{Liu:stables}.

Later on, we will consider these two cases separately. But first
let us fix a Weierstrass equation $\mathcal E: y^2+Qy=P$ for
$C\otimes_k \unr{k}_\nu$ 
such that
\begin{equation*}
  \omega_1 = \frac{dx}{2y + Q} \quad\text{and}\quad
  \omega_2 = \frac{xdx}{2y + Q}
\end{equation*}
constitute an $\cO$-basis of
$H^0(\mathcal{C}_{\rm min},\omega_{\mathcal{C}_{\rm min}/S})$,
its existence is guaranteed by Proposition 2(a) \cite{Liu:cond}. 
Then
\begin{equation*}
  \magicsection = \Delta_{\mathcal E}^2 (\omega_1\wedge\omega_2)^{\otimes 20}
\in 
\det H^0(\mathcal{C}_{\rm min},\omega_{\mathcal{C}_{\rm min}/S})^{\otimes 20}
\end{equation*}
by the invariance mentioned after Theorem \ref{hyperelliptic}. 
The equation $\mathcal E$ is minimal in Liu's sense, Definition 1 \textit{loc.~cit.}, 
and we use ${\rm ord}_\nu(\Delta_{\rm min})$ to denote the order of Liu's
minimal discriminant. Observe that this order is non-negative, but may be less than the
order of the  minimal discriminant
$\Delta^0_{\rm min}(C)$. 
By Proposition 3 and its corollary, both \textit{loc.~cit.}, 
we find
\begin{equation}
  \label{eq:ordetavsordelta}
{\rm ord}(\magicsection)=2{\rm ord}_\nu(\Delta_{\rm min}).
\end{equation}

First, let us suppose that the special fibre of the stable model is not
smooth. 
Then we are in case (V) of Th\'eor\`eme 1 \cite{Liu:stables}  and
by Proposition 2 \textit{loc.~cit.} 
   $\mathcal{C}_{\rm min}$ is of type
[$I_0\text{-}I_0\text{-}m$] in Namikawa and Ueno's classification \cite{NamikawaUeno}. The
value  
\begin{equation}
\label{eq:typeV}
  m = \frac{1}{12\iota} {\rm ord}_\nu(J_{10}^\iota
  J_{2\iota}^{-5})\ge 1,
\end{equation}
computed in part (v) of the proposition 
is the thickness of the singular point in the special fiber of
$\mathcal{C}_{\rm st}$, 
here we used
 that $I_2 = J_2/12$, $I_6=J_6$, and $I_8=J_8$ in the references notation.
The fibre of $\mathcal{C}_{\rm min}\rightarrow
\mathcal{C}_{\rm st}$ 
above the unique singular point is a chain of $m-1$ copies
of the projective line. 
We shall use the Artin conductor of $\mathcal{C}_{\rm min}/S$, see the
introduction \cite{Liu:cond} for a definition. By Proposition 1 \textit{loc.~cit.}
\begin{equation}
\label{eq:case1art}
  -{\rm Art}(\mathcal{C}_{\rm min}/S) = m,
\end{equation}
indeed the conductor mentioned in the reference has exponent $0$ because
$\jac{C\otimes_k k_\nu^{unr}}$ has good reduction at $\nu$. 

Saito proved in Theorem 1 \cite{Saito:Duke88} that 
$-{\rm Art}(\mathcal{C}_{\rm min}/S)$ equals the order of yet a further
discriminant  attached to 
$\mathcal{C}_{\rm min}/S$; its definition is given in \textit{loc.~cit.} and
relies on  unpublished work of Deligne. Saito attributes this equality to Deligne
in the semi-stable case which covers our application. 
The Corollary to Proposition 3 \cite{Liu:cond} and Proposition
\cite{Saito:Comp89} yield 
\begin{equation}
\label{eq:case1orddeltamin}
  {\rm ord}_\nu (\Delta_{\rm min}) =   -{\rm Art}(\mathcal{C}_{\rm
    min}/S) + m = 2m.
\end{equation}
Using this we can relate the section $\eta$ to the Igusa invariants as follows
\begin{equation}
\label{eq:ordmagic}
  {\rm ord}(\magicsection) = 2{\rm ord}_{\nu}(\Delta_{\rm min})= 4m
= \frac{1}{3\iota}{\rm ord}_\nu(J_{10}^\iota I_{2\iota}^{-5})
\end{equation}
where the first equality used 
 (\ref{eq:ordetavsordelta}) and last equality used (\ref{eq:typeV}). 
 We obtain
\begin{equation}
\label{eq:caseVord}
  {\rm ord}(\magicsection) 
= \frac{1}{3\iota}{\rm ord}_\nu(J_{10}^\iota J_{2\iota}^{-5})
= \frac{1}{3\iota}\max\{0,{\rm ord}_\nu(J_{10}^\iota J_{2\iota}^{-5})\}
\end{equation}
and hence the first equality of this proposition in the current case.

Next we apply a result of Liu to relate ${\rm ord}(\magicsection)$ to
the order of 
the minimal discriminant $\Delta^0_{\rm min}(C)$. In Liu's notation \cite{Liu:cond} we have
$c(\mathcal{C}_{\textrm{min}}) = m$. 
His Th\'eor\`eme 2 \textit{loc.~cit.} 
implies
\begin{equation*}
  {\rm ord}_{\nu}(\Delta^0_{\rm min}(C))   
=  {\rm ord}_{\nu}(\Delta_{\rm min}) + 10m.
\end{equation*}
So 
  ${\rm ord}_{\nu}(\Delta^0_{\rm min}(C)) = 12m$
by  (\ref{eq:case1orddeltamin}). The second
equality in the assertion follows from (\ref{eq:ordmagic}).

Second,  suppose that the special fibre of $\mathcal{C}_{st}$ is
smooth. Using the same reference as above we find that the Artin
conductor
of $\mathcal{C}/S$ vanishes. 
Just as near (\ref{eq:case1orddeltamin}) we find 
${\rm ord}_\nu(\Delta_{\rm min})=0$ and
thus  
${\rm ord}_\nu(\Delta^0_{\rm min}(C))=0$
as 
$0\le {\rm ord}_\nu(\Delta^0_{\rm min}(C))\le {\rm ord}_\nu(\Delta_{\rm
  min})$ holds in general by   Proposition 2(d)  \cite{Liu:cond}.
Using (\ref{eq:ordetavsordelta}) we conclude
${\rm ord}(\magicsection)=0$.
Th\'eor\`eme 1 \cite{Liu:stables}, attributed to Igusa, states
${\rm ord}_\nu( J_{10}^{-\iota}J_{2\iota}^5) \ge 0$ for
all $\iota \in \{1,2,3,4,5\}$.
In particular,
${\rm ord}_\nu( J_{10}^{\iota}J_{2\iota}^{-5}) \le 0$
and so  the proposition holds true in this case too.
%
\end{proof}

\subsubsection{Proof of Theorem \ref{hyperelliptic}}
Since there is a $k$-rational Weierstrass point by 
hypothesis, there is 
  a restricted Weierstrass equation as in Proposition
\ref{hyperelliptique infinie}, with coefficients in $k$.
Part (i) of the theorem follows by studying the local contributions to the
Faltings height. The infinite places are handled by Proposition
\ref{hyperelliptique infinie} and the finite places are dealt with 
by Proposition \ref{prop:nonarch}. 
 Observe that the Arakelov degree of
$\magicsection$ is $20$ times the desired Faltings height of $\jac{C}$. 

Part (ii) is the second equality in  
Proposition  \ref{prop:nonarch}. 


\section{Archimedean estimates}
\label{sec:arch}
\subsection{Lower bounds for the Siegel modular form of weight $10$ in degree
$2$}\label{theta functions}

The contribution of the infinite places to the Faltings height 
in Theorem \ref{hyperelliptic} involves the Siegel modular form $\chi_{10}$ of weight $10$
and degree $2$
defined in (\ref{def:J10}). 
A lower bound for the modulus of $\chi_{10}$ can be used to bound the
height from below. 

In this section, the period matrix  $Z$ lies in Siegel's
 fundamental domain $\sfd_2$ described in Section \ref{sec:notation}.

The modular form $\chi_{10}$ vanishes at those elements
\begin{equation}
\label{eq:Znotation}
Z = \left(
\begin{array}{cc}
z_1 & z_{12} \\
z_{12} & z_2
\end{array}\right)
\end{equation}
of $\sfd_2$ for which $z_{12}$ vanishes and only at those;
\textit{cf.} the proof of
 Proposition 2 in Section 9 \cite{Kling}. 
They correspond to abelian surfaces that are products of elliptic curves;
thus they
are not jacobians of genus $2$ curves.



In the following lemmas we implicitly use techniques from the second-named
author's work \cite{Paz3} and obtain some minor numerical improvements. 
We will use $a$ and $b$ to denote components of the even
characteristic $\trans{(a,b)}\in \mathcal{Z}_{2}$ from Section \ref{sec:hyperelliptic}  and abbreviate
$$T_{ab}=\Big\{\displaystyle{n\in{\mathbb{Z}^{2}}\,|\,\imagS
Q_{ab}(n)=\min_{m\in{\mathbb{Z}^{2}}}\imagS Q_{ab}(m)}\Big\}.$$

\begin{lem}\label{carac generique}
For all $n,n'\in T_{ab}$ we have 
$$e^{i\pi Q_{ab}(n)}=e^{i\pi Q_{ab}(n')}.$$ Moreover,
$T_{ab}$ is finite and 
$$\left|\theta_{ab}(0,Z) \right|\geq 2 \# T_{ab}\cdot e^{-\pi \min_{m\in{\mathbb{Z}^{2}}}\imagS Q_{ab}(m)}-\sum_{n\in{\mathbb{Z}^{2}}}e^{-\pi\imagS Q_{ab}(n)}.$$
\end{lem}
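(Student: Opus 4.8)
The idea is to isolate the ``dominant'' terms of the theta series $\theta_{ab}(0,Z)=\sum_{n\in\IZ^2}e^{i\pi Q_{ab}(n)}$, namely those indexed by $T_{ab}$, and to show that they reinforce one another rather than cancel.

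I would first dispose of the routine points. Since $Z\in\IH_2$ its imaginary part $Y=\imagS Z$ is positive definite, and as $a,b\in\tfrac12\IZ^2$ are real one has $\imagS Q_{ab}(n)=\trans{(n+a)}Y(n+a)$. Being, up to translation, a positive definite quadratic form, $\imagS Q_{ab}(n)\to+\infty$ as $\|n\|\to\infty$; hence the series defining $\theta_{ab}(0,Z)$ converges absolutely and the minimum of $\imagS Q_{ab}$ is attained on a finite set $T_{ab}$.

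The heart of the matter is the constancy of $e^{i\pi Q_{ab}(n)}$ on $T_{ab}$. Given $n,n'\in T_{ab}$, set $w=n-n'\in\IZ^2$, and note that $p:=n+n'+2a$ is an integer vector because $2a\in\IZ^2$. Expanding and using $\trans{Z}=Z$,
\[
Q_{ab}(n)-Q_{ab}(n')=\trans{w}Zp+2\trans{w}b .
\]
Since $n,n'$ are both minimisers, $\trans{(n+a)}Y(n+a)=\trans{(n'+a)}Y(n'+a)$, which rearranges to $\trans{w}Yp=0$; hence $\trans{w}Zp=\trans{w}(\realS Z)p$ is real, so $Q_{ab}(n)-Q_{ab}(n')\in\IR$. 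It remains to see it lies in $2\IZ$, and this is where the even--characteristic hypothesis enters. The reflection $v\mapsto -v-2a$ preserves $\IZ^2$ as well as the form $v\mapsto\trans{(v+a)}Y(v+a)$, hence preserves $T_{ab}$; on the locus $\imagS z_{12}>0$ one checks, splitting according to $a\in\{(0,0),(\tfrac12,0),(0,\tfrac12),(\tfrac12,\tfrac12)\}$ and using that $Y$ is Minkowski reduced, that $\#T_{ab}\le 2$ and that this reflection swaps the (at most two) minimisers, so $p=0$. In that case $w=2(n+a)$ and
\[
Q_{ab}(n)-Q_{ab}(n')=2\trans{w}b=4\trans{n}b+4\trans{a}b\in 2\IZ ,
\]
since $\trans{n}b\in\tfrac12\IZ$ and $\trans{(a,b)}$ being even means exactly that $4\trans{a}b\in 2\IZ$. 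Thus all $e^{i\pi Q_{ab}(n)}$, $n\in T_{ab}$, coincide; each has modulus $e^{-\pi\mu}$ with $\mu=\min_m\imagS Q_{ab}(m)$, so $\bigl|\sum_{n\in T_{ab}}e^{i\pi Q_{ab}(n)}\bigr|=\#T_{ab}\cdot e^{-\pi\mu}$.

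The stated inequality is then a short triangle--inequality computation: writing $\theta_{ab}(0,Z)=\sum_{n\in T_{ab}}e^{i\pi Q_{ab}(n)}+\sum_{n\notin T_{ab}}e^{i\pi Q_{ab}(n)}$ gives $|\theta_{ab}(0,Z)|\ge \#T_{ab}\,e^{-\pi\mu}-\sum_{n\notin T_{ab}}e^{-\pi\imagS Q_{ab}(n)}$, and substituting $\sum_{n\notin T_{ab}}e^{-\pi\imagS Q_{ab}(n)}=\sum_{n\in\IZ^2}e^{-\pi\imagS Q_{ab}(n)}-\#T_{ab}\,e^{-\pi\mu}$ yields precisely the bound asserted. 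The step I expect to be the real obstacle is the parity claim $Q_{ab}(n)-Q_{ab}(n')\in 2\IZ$: cancelling the imaginary part is automatic from minimality, but to pin down the real part — via $p=0$ — one must combine the even--characteristic condition with the explicit reduction theory of the binary form $Y$, and the degenerate configurations in which $\imagS Q_{ab}$ has several mutually non-symmetric minimisers require separate handling.
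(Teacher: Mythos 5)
Your overall route is necessarily different from the paper's, since the paper proves this lemma simply by citing Lemma 4.18 of \cite{Paz3}; you attempt a self-contained argument. Most of it is sound: absolute convergence and finiteness of $T_{ab}$, the identity $Q_{ab}(n)-Q_{ab}(n')=\trans{w}Zp+2\trans{w}b$ with $w=n-n'$, $p=n+n'+2a$, the reality of this difference via minimality, the parity computation $4\trans{(n+a)}b\in 2\IZ$ when $p=0$ (this is exactly where evenness of the characteristic enters), and the final triangle-inequality bookkeeping that produces the factor $2$ are all correct.

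The gap is the classification step, and it is not the routine verification you defer. You work ``on the locus $\imagS z_{12}>0$'', but the standing hypothesis is only $Z\in\sfd_2$, which gives $\imagS z_{12}\ge 0$; the stratum $\imagS z_{12}=0$ genuinely meets $\sfd_2$ (take $\imagS Z=\diag{y_1,y_2}$ with $y_1,y_2$ large, $\realS z_{11}=\realS z_{22}=0$ and $\realS z_{12}=1/4$). There, for $a=(1/2,1/2)$ the set $T_{ab}$ consists of the four points with $n+a=(\pm 1/2,\pm 1/2)$, so $\#T_{ab}=4$ and the minimisers are not pairwise swapped by $n\mapsto -n-2a$: for $n=(0,0)$, $n'=(-1,0)$ one has $w=(1,0)$, $p=(0,1)\neq 0$, and $Q_{ab}(n)-Q_{ab}(n')=z_{12}+2b_1$, which is real (since $\imagS z_{12}=0$) but in general not an even integer. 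So your mechanism ``$p=0$ hence parity'' cannot reach these pairs; worse, for the even characteristic $\trans{(a,b)}=[1/2,1/2,0,0]$ the asserted phase equality on this stratum is literally the condition $e^{i\pi z_{12}}=1$, which fails for $\realS z_{12}=1/4$. Hence the deferred ``degenerate configurations'' are precisely the sticking point: to complete the argument one must either restrict to $\imagS z_{12}>0$ (or otherwise handle the boundary stratum by a different grouping of terms), or fall back on the exact formulation and proof of Lemma 4.18 in \cite{Paz3}, which this paper does not reproduce.
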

\begin{proof}
This is Lemma 4.18 \cite{Paz3}. 
\end{proof}

\begin{lem}\label{propcarac00}
If $\trans{(a,b)}\in{\Theta_{1}}$, i.e.   $a=0$, one has
$$\left|\theta_{ab}(0,Z)\right|\geq 0.44$$
 for all $Z\in{\sfd_2}$.
\end{lem}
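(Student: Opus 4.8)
The plan is to combine Lemma~\ref{carac generique} with a clean upper bound for the exponential sum appearing there. Since $\trans{(a,b)}\in\Theta_1$ means $a=0$, we have $Q_{ab}(n)=\trans{n}Zn+2\,\trans{n}b$, and since $b$ has real entries $\imag{Q_{ab}(n)}=\trans{n}Y n$ with $Y:=\imag{Z}$. As $Y$ is positive definite, $\min_{m\in\IZ^2}\imag{Q_{ab}(m)}=0$, attained only at $m=0$, so $T_{ab}=\{0\}$ and $\#T_{ab}=1$. Lemma~\ref{carac generique} then gives
\[
 \bigl|\theta_{ab}(0,Z)\bigr|\ \ge\ 2-\sum_{n\in\IZ^2}e^{-\pi\,\trans{n}Y n},
\]
so it suffices to show $\sum_{n\in\IZ^2}e^{-\pi\,\trans{n}Y n}\le 1.56$ for every $Z\in\sfd_2$.

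To bound this series I would use reduction theory for $Z\in\sfd_2$. Writing the entries of $Y$ as $y_{11},y_{12},y_{22}$, properties (iii)$_a$ and (iii)$_b$ of Siegel reduction give $0\le y_{12}\le\tfrac12 y_{11}\le\tfrac12 y_{22}$, and condition (i), applied to the symplectic matrix that induces $z_{11}\mapsto -1/z_{11}$ together with $|\Ree(z_{11})|\le\tfrac12$, forces $|z_{11}|\ge 1$, hence $y_{11}\ge\tfrac{\sqrt3}{2}$. Completing the square,
\[
 \trans{n}Y n=y_{11}\Bigl(n_1+\tfrac{y_{12}}{y_{11}}n_2\Bigr)^2+\Bigl(y_{22}-\tfrac{y_{12}^2}{y_{11}}\Bigr)n_2^2,
\]
where $y_{22}-y_{12}^2/y_{11}\ge y_{22}-\tfrac14 y_{11}\ge\tfrac34 y_{11}$. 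For each fixed $n_2$ the inner sum over $n_1$ equals $\sum_{n_1\in\IZ}e^{-\pi y_{11}(n_1+c)^2}$ for a real shift $c$, and by Poisson summation this is at most $S(y_{11})$ where $S(t):=\sum_{n\in\IZ}e^{-\pi t n^2}$, independently of $c$. Summing over $n_2$ and using that $S$ is decreasing with $y_{11}\ge\sqrt3/2$,
\[
 \sum_{n\in\IZ^2}e^{-\pi\,\trans{n}Y n}\ \le\ S(y_{11})\,S\!\Bigl(\tfrac34 y_{11}\Bigr)\ \le\ S\!\Bigl(\tfrac{\sqrt3}{2}\Bigr)\,S\!\Bigl(\tfrac{3\sqrt3}{8}\Bigr).
\]
A direct numerical evaluation of the last product shows it is less than $1.56$ (in fact roughly $1.43$), and substituting this into the displayed lower bound yields $|\theta_{ab}(0,Z)|\ge 0.44$.

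The only non-formal inputs are two standard facts that I would quote rather than reprove: that $Z\in\sfd_2$ implies $\imag{z_{11}}\ge\sqrt3/2$, which follows from condition (i) of the definition of $\sfd_2$ exactly as for the standard fundamental domain of $\SL{2}{\IZ}$ acting on $\IH$; and that $\sum_{n\in\IZ}e^{-\pi t(n+c)^2}$ is maximised over $c$ at $c\in\IZ$, which is immediate from Poisson summation since on the Fourier side the coefficients $t^{-1/2}e^{-\pi m^2/t}$ are all nonnegative. With those in hand the argument is pure bookkeeping and the numerical margin is generous; this is essentially the $a=0$ case of the corresponding estimate in \cite{Paz3}, and I would expect the write-up to simply invoke \cite{Paz3} for the theta-sum monotonicity.
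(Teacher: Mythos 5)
Your argument is correct, and every step checks out: for $a=0$ the imaginary part of $Q_{ab}(n)$ is $\trans{n}\imag{Z}n$, which is positive definite, so $T_{ab}=\{0\}$ and Lemma \ref{carac generique} reduces the claim to bounding $\sum_{n\in\IZ^2}e^{-\pi\trans{n}\imag{Z}n}$; the reduction inequalities you invoke ($0\le 2y_{12}\le y_{11}\le y_{22}$ from (iii)$_a$--(iii)$_b$, and $y_{11}\ge\sqrt{3}/2$ from property (i) applied to the embedded inversion $z_{11}\mapsto -1/z_{11}$ together with $|\real{z_{11}}|\le 1/2$) are all valid, and the Poisson-summation majorisation of the shifted one-dimensional theta sums by the unshifted ones is exactly the nonnegativity of the Fourier coefficients. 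Numerically your bound gives $\sum_{n}e^{-\pi\trans{n}\imag{Z}n}\le S(\sqrt{3}/2)\,S(3\sqrt{3}/8)\approx 1.43$, hence $|\theta_{ab}(0,Z)|\ge 0.57$, which is even stronger than the stated $0.44$. The only difference from the paper is that the paper gives no internal argument at all: its proof is a one-line citation of Proposition 4.19 of \cite{Paz3}, whose underlying method (the lower bound of Lemma \ref{carac generique} combined with reduction-theoretic estimates on the exponents) is the same as yours. So your write-up is essentially a self-contained reconstruction of the outsourced proof, with a slightly better constant coming from your sharper treatment of the off-diagonal term via completing the square rather than cruder tail estimates.
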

\begin{proof}
This follows from Proposition 4.19 \cite{Paz3}. 
\end{proof}

\begin{lem}\label{propcarac1000}
If $\trans{(a,b)} \in \Theta_2$ with
$a\not=[0,0]$ and  $a\not=[1/2,1/2]$, one has
$$
\left|\theta_{ab}(0,Z)\right|  \ge
0.75 e^{-\pi\trans{a} \imagS{Z} a}.
$$
\end{lem}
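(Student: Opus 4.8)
The plan is to reuse the method behind Lemmas \ref{carac generique} and \ref{propcarac00}: isolate the minimal terms of the theta series and dominate the remainder by a geometric series. Set $Y=\imag{Z}=\left(\begin{smallmatrix}y_1&y_{12}\\y_{12}&y_2\end{smallmatrix}\right)$; since $Y$ is Minkowski reduced one has $0\le 2y_{12}\le y_1\le y_2$, and since $Z\in\sfd_2$ one has $y_1\ge\sqrt3/2$. Under the hypotheses of the lemma there are exactly two possibilities, $a=[1/2,0]$ and $a=[0,1/2]$ (each occurring for two choices of $b$). Note that $\imagS Q_{ab}(n)=\trans{(n+a)}Y(n+a)$ depends only on $a$, and that $\trans a Y a=\imagS Q_{ab}(0)$ equals $y_1/4$ if $a=[1/2,0]$ and $y_2/4$ if $a=[0,1/2]$.

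First I would pin down the minimising set. Using the Minkowski inequalities one checks that $\imagS Q_{ab}$ attains its minimum $\trans a Y a$ exactly on $T_{ab}=\{(0,0),(-1,0)\}$ when $a=[1/2,0]$, and exactly on $T_{ab}=\{(0,0),(0,-1)\}$ when $a=[0,1/2]$; concretely, every other $n\in\IZ^2$ satisfies $\imagS Q_{ab}(n)-\trans a Y a\ge\tfrac12 y_1>0$. In particular $\#T_{ab}=2$, so Lemma \ref{carac generique} gives
\[
\bigl|\theta_{ab}(0,Z)\bigr|\ \ge\ 4\,e^{-\pi\trans a Y a}-\sum_{n\in\IZ^2}e^{-\pi\imagS Q_{ab}(n)}\ =\ 2\,e^{-\pi\trans a Y a}-S,\qquad S:=\sum_{n\notin T_{ab}}e^{-\pi\imagS Q_{ab}(n)}.
\]
Hence it suffices to prove $S\le\tfrac54\,e^{-\pi\trans a Y a}$, which yields $\bigl|\theta_{ab}(0,Z)\bigr|\ge\tfrac34\,e^{-\pi\trans a Y a}\ge 0.75\,e^{-\pi\trans a Y a}$, as claimed.

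For the estimate of $S$ I would treat the case $a=[0,1/2]$ in detail (the case $a=[1/2,0]$ being analogous and slightly easier) and split $\IZ^2\setminus T_{ab}$ into three families according to $n=(n_1,n_2)$: those with $n+a$ of the form $(\pm m,\pm\tfrac12)$ with $m\ge1$; those with $n_1=0$ (and then $n_2\notin\{0,-1\}$); and those with $n_1\ne0$ and $|n_2+\tfrac12|\ge\tfrac32$. For the first family, $\imagS Q_{ab}(n)-\trans a Y a=y_1m^2\pm y_{12}m\ge y_1m(m-\tfrac12)$ by $2|y_{12}|\le y_1$, so it contributes at most $4\sum_{m\ge1}e^{-\pi y_1m(m-1/2)}$, which by $y_1\ge\sqrt3/2$ is dominated by $4e^{-\pi\sqrt3/4}\approx1.03$. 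For the second family the excess exponent is $\ge 2y_2$, contributing a negligible $2(e^{-2\pi y_2}+\cdots)$. For the third family, completing the square in $n_1$ and using $y_2\ge y_1$ bounds the excess exponent below by $y_1$ times a positive definite quadratic with constant term $\tfrac{23}{16}$; summing over $n_1$ against the Gaussian series $\sum_{m\in\IZ}e^{-\pi y_1m^2}$, which majorises all its translates by Poisson summation, shows this family contributes $O(e^{-23\pi\sqrt3/32})$, again negligible. Adding the three bounds gives $S\,e^{\pi\trans a Y a}<\tfrac54$ with a little room to spare.

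The main obstacle, and what fixes the constant $0.75$, is exactly the case $a=[0,1/2]$: since $y_2$ is unbounded on $\sfd_2$ one cannot argue that all non-minimal lattice vectors are long, and the ``short'' vectors $(\pm1,\pm\tfrac12)$ keep contributing $4e^{-\pi y_1/2}$ however large $y_2$ grows. Thus the whole estimate hinges on the Siegel-reduction bound $y_1\ge\sqrt3/2$, which is just strong enough to make $4e^{-\pi\sqrt3/4}<\tfrac54$; keeping the remaining tails small enough not to eat up the slack is routine, and is where the sharper bookkeeping borrowed from \cite{Paz3} is convenient.
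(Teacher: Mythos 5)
Your proposal is correct, and it supplies an argument that the paper itself does not spell out: the paper's proof of this lemma is a one-line citation of Proposition 4.20 of \cite{Paz3}, and the method of that cited result (isolate the lattice points minimising $\imagS Q_{ab}$, then dominate the tail using Minkowski/Siegel reduction) is exactly what you reconstruct from Lemma \ref{carac generique}. With your notation $y_1,y_{12},y_2$ for the entries of $\imag{Z}$, your determination of $T_{ab}$ and of the minimum value $\trans{a}\imagS{Z}a$ is right for both admissible $a$ under $0\le 2y_{12}\le y_1\le y_2$ (every other lattice point has excess at least $y_1/2$), and the numerical budget closes: with $\#T_{ab}=2$ it suffices to get $S\le \tfrac54 e^{-\pi\trans{a}\imagS{Z}a}$, and the dominant family contributes $4\sum_{m\ge1}e^{-\pi y_1 m(m-1/2)}\approx 1.03$ at $y_1=\sqrt3/2$ (strictly this slightly exceeds its first term $4e^{-\pi\sqrt3/4}$, but only by about $10^{-3}$), while the other two families are exponentially smaller, leaving comfortable slack below $\tfrac54$. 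So the difference from the paper is essentially presentational: you give a self-contained proof, at the cost of the routine bookkeeping for the second and third families, where the paper outsources the whole estimate to \cite{Paz3}.
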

\begin{proof}
This follows from Proposition 4.20 \cite{Paz3}. 
\end{proof}

The crucial case is $a=b=[1/2,1/2]$ as the corresponding theta
constant vanishes on diagonal matrices in Siegel's fundamental
domain. 

\begin{lem}\label{propcarac1100}
If $\trans{(a,b)}=[1/2,1/2,\nu/2,\nu/2]$ with
$\nu\in{\{0,1\}}$, one has
$$\left|\theta_{ab}(0,Z)\right|
\geq 1.12|1+(-1)^\nu e^{\pi
i z_{12}}|e^{-\pi(\trans{a}\imagS Z a-\imagS z_{12})}.$$
\end{lem}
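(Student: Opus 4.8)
The plan is to separate the theta series $\theta_{ab}(0,Z)=\sum_{n\in\IZ^2}e^{i\pi Q_{ab}(n)}$ into the contribution of its four innermost lattice points and a remainder, and to show that the remainder is comparatively small. Write $a=\trans{(1/2,1/2)}$, $b=\trans{(\nu/2,\nu/2)}$, $y_1=\imag{z_1}$, $y_2=\imag{z_2}$, $y_{12}=\imag{z_{12}}$, and recall that for $Z\in\sfd_2$ the matrix $\imag{Z}$ is Minkowski reduced, so $0\le y_{12}\le\tfrac12 y_1\le\tfrac12 y_2$, and moreover $y_1\ge\sqrt3/2$. Since $2\trans{(n+a)}b$ is real, $\imag{Q_{ab}(n)}=\trans{(n+a)}\imag{Z}(n+a)$; by Minkowski reducedness this quadratic form attains its minimum $\mu:=\tfrac14(y_1-2y_{12}+y_2)=\trans{a}\imag{Z}a-y_{12}$ at $n\in\{(0,-1),(-1,0)\}$, its next value $\mu+y_{12}$ at $n\in\{(0,0),(-1,-1)\}$, and strictly larger values elsewhere. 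A direct evaluation gives
\[
\sum_{n\in\{(0,0),(-1,0),(0,-1),(-1,-1)\}}e^{i\pi Q_{ab}(n)}
= 2\,e^{i\pi(z_1+z_2)/4}e^{-i\pi z_{12}/2}\bigl(1+(-1)^\nu e^{i\pi z_{12}}\bigr),
\]
whose modulus is exactly $2\,|1+(-1)^\nu e^{i\pi z_{12}}|\,e^{-\pi\mu}$, so the right-hand side of the lemma equals $\tfrac{1.12}{2}$ times this. Bounding the remaining terms crudely by the triangle inequality (as in Lemma \ref{carac generique}) is not enough, because for $\nu=1$ this quantity vanishes on the diagonal locus $z_{12}=0$ (where the abelian surface is a product and the theta constant vanishes), so one must first pull out the common factor.

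To do so I would reorganise the (absolutely convergent) series by pairing $n=(n_1,n_2)$ with $(n_1,-1-n_2)$. With $u=n_1+\tfrac12$, $w=n_2+\tfrac12$ and $k=(2n_1+1)(2n_2+1)$, the corresponding pair-sum equals $2\cos(\tfrac{k\pi}{2}z_{12})e^{i\pi(u^2z_1+w^2z_2)}$ when $\nu=0$ and $-2i(-1)^{n_1+n_2}\sin(\tfrac{k\pi}{2}z_{12})e^{i\pi(u^2z_1+w^2z_2)}$ when $\nu=1$. For odd $k$ the ratios $\cos(k\theta)/\cos\theta$ and $\sin(k\theta)/\sin\theta$ are trigonometric (Chebyshev) polynomials of degree $|k|-1$ in $\cos\theta$; applying this with $\theta=\pi z_{12}/2$ exhibits $C_\nu:=\cos(\pi z_{12}/2)$ (if $\nu=0$), resp. $\sin(\pi z_{12}/2)$ (if $\nu=1$), as a common factor of all pair-sums, and one checks $1+(-1)^\nu e^{i\pi z_{12}}=2\epsilon_\nu e^{i\pi z_{12}/2}C_\nu$ with $|\epsilon_\nu|=1$. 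Hence $\theta_{ab}(0,Z)=C_\nu\cdot G(Z)$, where $G(Z)=\epsilon\cdot 4e^{i\pi(z_1+z_2)/4}+R(Z)$ with $|\epsilon|=1$ (the displayed term coming from the four innermost pairs, of modulus $4e^{-\pi(y_1+y_2)/4}$), and $|1+(-1)^\nu e^{i\pi z_{12}}|e^{-\pi\mu}=2|C_\nu|e^{-\pi(y_1+y_2)/4}$.

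It then remains to bound $R$. Using $|\cos(k\theta)/\cos\theta|,\ |\sin(k\theta)/\sin\theta|\le|k|\,e^{(|k|-1)|\imag{\theta}|}$ with $\theta=\pi z_{12}/2$, one obtains, with $p=|2n_1+1|$ and $q=2n_2+1$,
\[
|R(Z)|\,e^{\pi(y_1+y_2)/4}\ \le\ \sum 2pq\,e^{(pq-1)\pi y_{12}/2}\,e^{-\pi n_1(n_1+1)y_1-\pi n_2(n_2+1)y_2},
\]
the sum being over $n_2\ge0$ and $(n_1,n_2)\notin\{(0,0),(-1,0)\}$. Inserting $y_{12}\le\tfrac12 y_1$ and then $y_2\ge y_1$, and using $n_1(n_1+1)=\tfrac14(p^2-1)$, $n_2(n_2+1)=\tfrac14(q^2-1)$, each $(n_1,n_2)$-term is at most $2pq\,e^{-\tfrac{\pi}{4}(p^2-pq+q^2-1)y_1}$; since $p,q$ are odd and $(p,q)\ne(1,1)$ for every summand one has $p^2-pq+q^2\ge7$, so with $y_1\ge\sqrt3/2$ the resulting double series of positive terms is an explicit convergent sum which a short numerical estimate shows to be $<1.76$. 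Therefore $|G(Z)|\ge(4-1.76)e^{-\pi(y_1+y_2)/4}=2.24\,e^{-\pi(y_1+y_2)/4}$, and multiplying by $|C_\nu|$ yields
\[
|\theta_{ab}(0,Z)|\ \ge\ 2.24\,|C_\nu|\,e^{-\pi(y_1+y_2)/4}\ =\ 1.12\,|1+(-1)^\nu e^{i\pi z_{12}}|\,e^{-\pi(\trans{a}\imag{Z}a-y_{12})},
\]
as claimed. The main obstacle is precisely this last step: a term-by-term estimate cannot see that both sides vanish on $z_{12}=0$, so exhibiting $C_\nu$ as a genuine factor via the Chebyshev identities is indispensable, and the delicate bookkeeping is then balancing the growth $e^{(pq-1)\pi y_{12}/2}$ of the Chebyshev factors against the Gaussian decay $e^{-\pi n_j(n_j+1)y_j}$ — which is exactly where the fundamental-domain inequalities $y_{12}\le\tfrac12 y_1\le\tfrac12 y_2$ and $y_1\ge\sqrt3/2$ enter and where the numerical constant $1.12$ is pinned down.
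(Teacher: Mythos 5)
Your proof is correct. Note that the paper itself does not reprove this lemma: its "proof" is a citation of Proposition 4.22 of \cite{Paz3} together with the numerical inequality $2\bigl(2-\bigl(\sum_{m\ge 0}(2m+1)e^{-\pi\sqrt{3}\,m(m+1)/4}\bigr)^2\bigr)\ge 1.12$, so what you give is a self-contained derivation rather than a reference. Your central device --- pairing $n=(n_1,n_2)$ with $(n_1,-1-n_2)$ and using that, for odd $k$, $\cos(k\theta)/\cos\theta$ and $\sin(k\theta)/\sin\theta$ are sums of $|k|$ exponentials, hence of modulus at most $|k|e^{(|k|-1)|\imag{\theta}|}$, so that the factor $C_\nu$ vanishing on the diagonal locus is extracted from the entire series --- is precisely what is needed to get past the term-by-term bound of Lemma \ref{carac generique}, and it is morally the same mechanism that underlies the cited result: the shape of the paper's numerical constant, involving $\sum_m (2m+1)e^{-\pi\sqrt{3}\,m(m+1)/4}$, reflects the same bookkeeping of a $(2m+1)$-type factor growing like $e^{(\cdot)\,\imag{z_{12}}}$ against Gaussian decay, controlled by the reduction inequalities $0\le 2\imag{z_{12}}\le \imag{z_1}\le \imag{z_2}$ and $\imag{z_1}\ge \sqrt{3}/2$ (these standard facts about $\sfd_2$ deserve a one-line justification but are not a gap). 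Your numerics also hold with room to spare: at the extremal values $\imag{z_{12}}=\imag{z_1}/2$ and $\imag{z_2}=\imag{z_1}=\sqrt{3}/2$, the majorant $\sum 4pq\,e^{-\frac{\pi\sqrt{3}}{8}(p^2-pq+q^2-1)}$ over odd $(p,q)\ne(1,1)$ is approximately $0.56$, comfortably below the $1.76$ you need to reach $4-1.76=2.24=2\cdot 1.12$; in fact your argument would yield a slightly better constant than the one stated.
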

\begin{proof} 
This follows from Proposition 4.22 \cite{Paz3} and from 
\begin{equation*}
  2\left(2-\left(\sum_{m\ge 0} e^{-\frac{\pi\sqrt{3}}{4}
    m(m+1)}(2m+1)\right)^2\right) \ge 1.12. 
\end{equation*}
\end{proof}

\begin{lem}\label{exp complexe}
Let $z$ be a complex number with 
 $ |\!\real{z}| \leq \pi$. Then
 $$ \displaystyle{\vert e^{i z/2}+1\vert\geq 1} \quad\text{and}\quad
\displaystyle{\vert e^{i z}-1\vert\geq  (1-e^{-1}) \min\{1, \; \vert z\vert\}.}$$
\end{lem}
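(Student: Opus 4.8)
The plan is to reduce everything to elementary real‑variable estimates. Write $z=x+iy$ with $x,y\in\IR$ and $|x|\le\pi$, and set $\lambda=1-e^{-1}$. The first inequality is immediate: since $|x/2|\le\pi/2$ one has $\cos(x/2)\ge0$, so
\[
|e^{iz/2}+1|^2=e^{-y}+2e^{-y/2}\cos(x/2)+1\ge1 .
\]
For the second inequality the workhorse will be the identity
\[
|e^{iz}-1|^2=e^{-2y}-2e^{-y}\cos x+1=(1-e^{-y})^2+4e^{-y}\sin^2(x/2),
\]
obtained by completing the square. It yields $|e^{iz}-1|\ge|1-e^{-y}|$ and, via Jordan's inequality $\sin t\ge\tfrac2\pi t$ on $[0,\tfrac\pi2]$, also $|e^{iz}-1|^2\ge\tfrac{4}{\pi^2}x^2+(1-e^{-y})^2$. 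I then split into cases. If $y\le0$, then $(1-e^{-y})^2\ge y^2$ (because $e^{-y}-1\ge-y$), hence $|e^{iz}-1|\ge\tfrac2\pi|z|\ge\lambda\min\{1,|z|\}$ since $\tfrac2\pi>\lambda$ (compare with rationals: $\tfrac2\pi>\tfrac{7}{11}>\tfrac{19}{30}>\lambda$, using $\pi<\tfrac{22}{7}$ and $e^{-1}>\tfrac{11}{30}$). If $y\ge1$, then $|e^{iz}-1|\ge1-e^{-y}\ge\lambda$. If $1\le|x|\le\pi$, then $\sin^2(x/2)\ge\sin^2(1/2)$, and minimising the quadratic $u\mapsto(1-u)^2+4\sin^2(1/2)\,u=(u-\cos1)^2+\sin^2 1$ over $u=e^{-y}>0$ gives $|e^{iz}-1|\ge\sin1>\lambda$. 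In all three cases the claim follows.

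This leaves the region $0<y<1$, $|x|<1$. Here the monotonicity of $t\mapsto\sin t/t$ gives $|\sin(x/2)|\ge\sin(1/2)\,|x|$, so
\[
|e^{iz}-1|^2\ge(1-e^{-y})^2+4e^{-y}\sin^2(1/2)\,x^2 .
\]
If $x^2+y^2\ge1$ I use $x^2\ge1-y^2$ and am reduced to showing $\chi(y):=(1-e^{-y})^2+4e^{-y}\sin^2(1/2)(1-y^2)\ge\lambda^2$ on $[0,1]$, which gives $|e^{iz}-1|\ge\lambda=\lambda\min\{1,|z|\}$. If $x^2+y^2<1$ I use $(1-e^{-y})^2\ge\lambda^2 y^2$, valid because $y\mapsto(1-e^{-y})/y$ is decreasing with value $\lambda$ at $y=1$: when $4e^{-y}\sin^2(1/2)\ge\lambda^2$ the $x^2$‑term already dominates $\lambda^2 x^2$, and otherwise $(1-e^{-y})^2+4e^{-y}\sin^2(1/2)\,x^2-\lambda^2(x^2+y^2)$ is decreasing in $x^2\in[0,1-y^2)$ with limiting value $\chi(y)-\lambda^2\ge0$ at $x^2=1-y^2$. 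So the whole proof comes down to $\chi(y)\ge\lambda^2$ on $[0,1]$. Since $\chi(1)=\lambda^2$, it suffices that $\chi$ be non‑increasing on $[0,1]$; writing $\chi'(y)=-2e^{-y}\bigl[(1-\cos1)(1+2y-y^2)-(1-e^{-y})\bigr]$ this is the assertion $\psi(y):=(1-\cos1)(1+2y-y^2)-(1-e^{-y})\ge0$ on $[0,1]$. Now $\psi''(y)=e^{-y}-2(1-\cos1)$ is decreasing and vanishes at $y_0=-\ln\bigl(2(1-\cos1)\bigr)\in(0,1)$, so $\psi'$ is maximised at $y_0$, where $\psi'(y_0)=-2(1-\cos1)y_0<0$; hence $\psi'<0$ on $[0,1]$, so $\psi(y)\ge\psi(1)=(1+e^{-1})-2\cos1>0$, the last inequality being checked with a couple of Taylor terms of $e^{-1}$ and $\cos1$.

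The main obstacle is the region near the origin handled in the previous paragraph: the constant $1-e^{-1}$ is sharp, equality holding at $z=i$, so no lossy estimate — Jordan's inequality by itself, or the power series of $e^{iz}-1$ — will suffice there, and one is forced into the one‑variable monotonicity analysis of $\chi$. Everything else is routine bookkeeping together with a handful of rational and Taylor‑series comparisons.
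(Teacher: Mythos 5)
Your proof is correct, but it follows a genuinely different route from the paper. The paper's argument is complex-analytic: it observes that $(e^{iz}-1)/z$ and $e^{iz}-1$ are non-vanishing on the relevant parts of the strip $|\real{z}|\le\pi$, applies the maximum modulus principle to their reciprocals to push the minimum of $|e^{iz}-1|/\min\{1,|z|\}$ to the boundary pieces $|\real{z}|=\pi$ or $|z|=1$, dispatches the first case directly, and handles $|z|=1$ by contradiction: if $|t|<1-e^{-1}$ with $t=e^{iz}-1$, then $|z+2\pi k|\le-\log(1-|t|)<1$, contradicting $|z|=1$. That argument is short and avoids all case analysis, but it is non-quantitative away from the boundary. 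Your proof is entirely elementary and real-variable: the exact identity $|e^{iz}-1|^2=(1-e^{-y})^2+4e^{-y}\sin^2(x/2)$, a four-way case split, and a one-variable monotonicity analysis of $\chi(y)=(1-e^{-y})^2+4e^{-y}\sin^2(1/2)(1-y^2)$ via the sign of $\psi$. It is longer and more computational, but it localises exactly where the constant $1-e^{-1}$ is attained (at $z=i$, matching $\chi(1)=\lambda^2$), so it also shows sharpness, which the paper's proof does not make visible. I checked the case analysis ($y\le 0$, $y\ge 1$, $1\le|x|\le\pi$, and the remaining square), the reduction of the last region to $\chi\ge\lambda^2$, and the monotonicity argument ($\psi''$ decreasing, $\psi'$ maximal and negative at $y_0=-\ln(2(1-\cos 1))$, $\psi(1)=1+e^{-1}-2\cos 1>0$); all steps are sound, and the numerical comparisons ($2/\pi>1-e^{-1}$, $\sin 1>1-e^{-1}$) are justified as you indicate.

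One small presentational caveat: the displayed inequality $|e^{iz}-1|^2\ge\frac{4}{\pi^2}x^2+(1-e^{-y})^2$, stated before the case split, is false for $y>0$ (Jordan's inequality only gives $4e^{-y}\sin^2(x/2)\ge\frac{4}{\pi^2}e^{-y}x^2$, and the factor $e^{-y}$ cannot be dropped in general). Since you invoke it only in the case $y\le 0$, where $e^{-y}\ge 1$ makes it valid, the proof is unaffected; you should simply either keep the factor $e^{-y}$ in the display or state the inequality under the hypothesis $y\le 0$.
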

\begin{proof}
The first inequality follows from $\real{e^{i z/2}}\ge 0$. 
For the second inequality we note that
$z\mapsto (e^{iz}-1)/z$ is entire and does not vanish if
$|\!\real{z}|\leq \pi$ and $z\mapsto e^{iz}-1$ does not vanish
if $|\!\real{z}|\le \pi$ and $|z|\ge 1$.
By the maximum modulus principle applied to the reciprocals we
deduce
that the minimum  of $|e^{iz}-1|/\min\{1,|z|\}$ subject to
$|\!\real{z}|\le\pi$ is attained on $|z|=1$ or $|\!\real{z}|= \pi$. In
the latter case the quotient is $|e^{-\imag{z}}+1|\ge 1$ which is
better than the claim. Let us now suppose $|z|=1$. We  assume $|t| <
1-e^{-1}$ with $t=e^{iz}-1$, this will lead to  a
contradiction and will thus complete this proof.  The logarithm $\log(1+t) = \sum_{n\ge 1} (-1)^{n+1} t^n/n$ converges
and satisfies $e^{\log(1+t)}=1+t=e^{iz}$. So $\log(1+t)=iz+2\pi ik$
for an integer $k$.
We bound the  modulus of $\log(1+t)$ from above using
the triangle inequality and obtain
\begin{equation*}
 |z+2\pi k|= |iz+2\pi ik| \le \sum_{n\ge 1} \frac{|t|^n}{n} = -\log(1-|t|)
<-\log(1-(1-e^{-1})) = 1.
\end{equation*}
This is impossible since $|z|=1$.
\end{proof}

The next proposition combines the previous lemmas.

\begin{prop}\label{minoration delta}
For any $Z\in{\sfd_{2}}$ as in (\ref{eq:Znotation}) one
has $$\vert \chi_{10}(Z) \vert \geq
c_0\min\{1, \pi\vert z_{12}\vert\}^2\;
e^{-2\pi(\Tr(\imagS{Z})-\imagS{z_{12}})}
\ge c_0\min\{1, \pi\vert z_{12}\vert\}^2\;
e^{-2\pi\Tr \imagS{Z}},$$
 with $c_0=8\cdot 10^{-5}$. 
\end{prop}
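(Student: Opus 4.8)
The plan is to write $\chi_{10}(Z)=\prod_{m\in\mathcal{Z}_2}\theta_m(0,Z)^2$ as a product over the ten even theta characteristics and to estimate each factor from below using the appropriate one of Lemmas \ref{propcarac00}, \ref{propcarac1000} and \ref{propcarac1100}. To this end I would split $\mathcal{Z}_2$ into three families according to the upper half $a$ of the characteristic $\trans{(a,b)}$: the four characteristics in $\Theta_1$, for which $a=0$; the four characteristics in $\Theta_2$ with $a\in\{[1/2,0],[0,1/2]\}$; and the two characteristics $[1/2,1/2,0,0]$ and $[1/2,1/2,1/2,1/2]$, for which $a=[1/2,1/2]$.

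For the first family, Lemma \ref{propcarac00} gives $\prod|\theta_m(0,Z)|^2\ge 0.44^8$. For the second family, Lemma \ref{propcarac1000} gives $|\theta_m(0,Z)|^2\ge 0.75^2 e^{-2\pi\,\trans{a}\imagS{Z}a}$ for each of the four characteristics; since the corresponding vectors $a$ are $[1/2,0],[1/2,0],[0,1/2],[0,1/2]$, the exponents add up to $\tfrac12\Tr\imagS{Z}$, so this family contributes at least $0.75^8 e^{-\pi\Tr\imagS{Z}}$. For the third family one has $a=[1/2,1/2]$, hence $\trans{a}\imagS{Z}a-\imagS{z_{12}}=\tfrac14\Tr\imagS{Z}-\tfrac12\imagS{z_{12}}$, and Lemma \ref{propcarac1100} yields
\[
|\theta_{[1/2,1/2,0,0]}(0,Z)|^2\,|\theta_{[1/2,1/2,1/2,1/2]}(0,Z)|^2\ge 1.12^4\,|1+e^{\pi iz_{12}}|^2\,|1-e^{\pi iz_{12}}|^2\,e^{-\pi\Tr\imagS{Z}+2\pi\imagS{z_{12}}}.
\]
Multiplying the three families and using $|1+e^{\pi iz_{12}}|^2|1-e^{\pi iz_{12}}|^2=|1-e^{2\pi iz_{12}}|^2$ then gives
\[
|\chi_{10}(Z)|\ge 0.44^8\cdot 0.75^8\cdot 1.12^4\,|1-e^{2\pi iz_{12}}|^2\,e^{-2\pi(\Tr\imagS{Z}-\imagS{z_{12}})}.
\]

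To conclude I would apply Lemma \ref{exp complexe} with $z=2\pi z_{12}$, which is admissible because $|\real{z_{12}}|\le\tfrac12$ on $\sfd_2$, so $|\real{2\pi z_{12}}|\le\pi$; it yields $|1-e^{2\pi iz_{12}}|\ge(1-e^{-1})\min\{1,2\pi|z_{12}|\}\ge(1-e^{-1})\min\{1,\pi|z_{12}|\}$. Substituting this and checking the numerical inequality $0.44^8\cdot 0.75^8\cdot 1.12^4\cdot(1-e^{-1})^2\ge 8\cdot 10^{-5}$ gives the first asserted bound with $c_0=8\cdot 10^{-5}$; the second bound is immediate from $\imagS{z_{12}}\ge 0$ on $\sfd_2$ by the Minkowski reducedness condition (iii)$_b$ (with $g=2$), which gives $\Tr\imagS{Z}-\imagS{z_{12}}\le\Tr\imagS{Z}$. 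I do not expect a genuine obstacle here: all the analytic content has already been isolated in the cited lemmas, so the only thing requiring care is the combinatorial bookkeeping of the ten characteristics together with the tracking and recombination of the exponential factors, and the single numerical verification at the end.
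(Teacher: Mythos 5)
Your proof is correct and follows essentially the same route as the paper: the same split of the ten even characteristics into the three families handled by Lemmas \ref{propcarac00}, \ref{propcarac1000}, and \ref{propcarac1100}, the same exponent bookkeeping, and the same use of Lemma \ref{exp complexe} with the identical numerical constant. The only (cosmetic) difference is that you merge the two factors into $|1-e^{2\pi i z_{12}}|$ and apply only the second inequality of Lemma \ref{exp complexe} at $2\pi z_{12}$, whereas the paper keeps $|1+e^{\pi i z_{12}}|$ and $|1-e^{\pi i z_{12}}|$ separate and uses both parts of that lemma; the resulting bound is the same.
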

\begin{proof}
We use
Lemmas \ref{propcarac00}, \ref{propcarac1000}, and \ref{propcarac1100} 
in connection with the definition (\ref{def:J10}) to obtain
$$
\vert \chi_{10}(Z)\vert
\geq
 0.44^8 \cdot 0.75^8 \cdot 1.12^4 
|e^{i \pi  z_{12}}+1|^2|e^{i\pi  z_{12}}-1|^2e^{4\pi \imagS{z_{12}}}
\prod_{\trans{(a,b)}\in\Theta_2} e^{-2\pi \trans{a}\imagS{Z}a}.
$$ 
Observe that $|\!\real{z_{12}}|\le 1/2$. 
The first inequality in the assertion follows from this, Lemma
\ref{exp complexe} applied to $2\pi z_{12}$
and $\pi z_{12}$,
and since the product over $\Theta_2$ equals $e^{-2\pi(\Tr\imagS{Z} + \imagS{z_{12}})}$.
The second inequality follows as $Z\in \sfd_2$ entails $\imagS z_{12}\ge 0$. 
\end{proof}











\subsection{Subconvexity}
\label{sec:subconvexity}

Let $K$ be a number field.
Say $\chi:\cg{K}\rightarrow\IC^\times$ is a character of the class
group.
We may also think of $\chi$ as a Hecke character of conductor $\O{K}$.
The $L$-series attached to the character $\chi$ is 
\begin{equation*}
  L(s,\chi) = \sum_{\mathfrak  A}
\frac{\chi([\mathfrak  A])}{\inorm{\mathfrak  A}^s}
\end{equation*}
where here and below we  sum over non-zero 
ideals $\mathfrak A$ of $\O{K}$. 
It is well-known that this
 Dirichlet series determines a meromorphic function 
on $\IC$ with at most a simple pole at $s=1$ if $\chi$ is the trivial character.

The following  subconvexity estimate
follows from Michel and Venkatesh's deep Theorem 1.1 \cite{MV}. 

\begin{thm}
\label{thm:subconvexity}
Let $F$ be a totally real number field.  There exist  constants 
$c_1>0, N>0$, and $\delta\in (0,1/4)$ depending on $F$ with
the following property. If $K/F$ is an  imaginary quadratic extension and
$\chi:\cg{K}\rightarrow \IC^\times$ is a 
 character, then
\begin{equation*}
  \left|L\left(\frac 12+it,\chi\right)\right| \le c_1 (1+|t|)^{N} |\Delta_K|^{1/4-\delta}.
\end{equation*}
\end{thm}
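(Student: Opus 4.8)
The plan is to deduce the bound from the subconvexity theorem of Michel and Venkatesh \cite{MV}, which controls $\mathrm{GL}_1$ and $\mathrm{GL}_2$ automorphic $L$-functions over a \emph{fixed} number field, subconvexly and uniformly in all analytic parameters at once. The crucial point is that, while $[K:\IQ]=2[F:\IQ]$ stays bounded, the field $K$ itself varies, so \cite{MV} cannot be applied to $L(s,\chi)$ viewed as a Hecke $L$-function over $K$; instead I would descend to the fixed base field $F$ by automorphic induction. So, first, regard $\chi$ as a finite-order Hecke character of $K$ of conductor $\O{K}$ and let $\sigma$ generate $\gal{K/F}$. If $\chi\neq\chi^{\sigma}$, then $\pi:=\mathrm{AI}_{K/F}(\chi)$ is a cuspidal (dihedral) automorphic representation of $\mathrm{GL}_2(\IA_F)$ with $L(s,\pi)=L(s,\chi)$, the induction here being classical as $K/F$ is quadratic. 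If $\chi=\chi^{\sigma}$, then $\chi=\psi\circ\mathrm{N}_{K/F}$ for some Hecke character $\psi$ of $F$ and $L(s,\chi)=L(s,\psi)\,L(s,\psi\eta_{K/F})$, where $\eta_{K/F}$ denotes the quadratic Hecke character of $F$ cut out by $K/F$; this case will be treated by the $\mathrm{GL}_1$ part of \cite{MV} applied to each factor.

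Second, I would record the analytic conductor. By the conductor formula for induced representations, and because $\chi$ is unramified, the arithmetic conductor of $\pi$ — equivalently the product $\mathfrak f(\psi)\,\mathfrak f(\psi\eta_{K/F})$ in the isobaric case — is the relative discriminant $\reldisc{K/F}$; moreover the archimedean parameters of $\pi$ are bounded independently of $K$ since $\chi$ has finite order. Hence the analytic conductor of $\pi$ at $s=\tfrac12$ is $\asymp_F\inorm{\reldisc{K/F}}=|\Delta_K|/|\Delta_F|^{2}\asymp_F|\Delta_K|$. To reach the point $s=\tfrac12+it$ I would twist by the unitary character $|\cdot|_F^{it}$, using $L(\tfrac12+it,\chi)=L(\tfrac12,\pi\otimes|\cdot|_F^{it})$: this leaves the arithmetic conductor unchanged and multiplies the analytic conductor by at most $(1+|t|)^{2[F:\IQ]}$.

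Third, I would apply Michel and Venkatesh's Theorem 1.1 \cite{MV} to $\pi\otimes|\cdot|_F^{it}$ over $F$ (in the isobaric case, to each of the two $\mathrm{GL}_1$ twists, recombining via $\mathfrak f(\psi)\,\mathfrak f(\psi\eta_{K/F})=\reldisc{K/F}$). This produces a constant $\delta_0=\delta_0(F)>0$ and a constant $N=N(F)$ with
\[
\bigl|L(\tfrac12+it,\chi)\bigr|\ \ll_F\ \bigl(|\Delta_K|\,(1+|t|)^{2[F:\IQ]}\bigr)^{1/4-\delta_0}\ \ll_F\ (1+|t|)^{N}\,|\Delta_K|^{1/4-\delta_0}.
\]
Setting $\delta:=\min\{\delta_0,\tfrac18\}\in(0,\tfrac14)$ only weakens the exponent, and absorbing the $F$-dependent implied constant into $c_1$ yields the stated inequality.

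Beyond \cite{MV} there is no hard step; what needs care is precisely this reduction to the fixed field $F$: the descent by automorphic induction (which is what makes \cite{MV} usable once $K$ ranges over infinitely many fields), the case split according to whether the induced representation is cuspidal or isobaric, the identification of the arithmetic conductor with $\reldisc{K/F}$ through the conductor formula for induced representations, and the polynomial control of the analytic conductor under the $|\cdot|^{it}$-twist.
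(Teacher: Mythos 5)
Your proposal is correct and follows the same route as the paper, which simply asserts that the bound ``follows from Michel and Venkatesh's Theorem 1.1'' without giving details; your write-up supplies exactly the standard intermediate steps behind that citation (automorphic induction of the unramified class group character to the fixed field $F$, the cuspidal versus isobaric case split, the identification of the conductor with the relative discriminant of norm $|\Delta_K|/|\Delta_F|^2$, and the twist by $|\cdot|^{it}$ to capture the $t$-aspect through the analytic conductor). Nothing in your argument conflicts with the paper's intended deduction.
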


The  following lemma involves a well-known trick in analytic number
theory, \textit{cf.} Duke,  Friedlander, and Iwaniec's work \cite{DuFrIw02} page 574.
We shift a contour integral into the critical strip and apply
the subconvexity result cited above. 

\begin{lem}
\label{lem:cgpbound}
  Let $F$ be a totally real number field and let $\delta$ be from
  Theorem \ref{thm:subconvexity}.
There is a constant
 $c_2>0$ depending only on $F$
with the following property. 
Say $K$ is a totally imaginary quadratic extension of $F$
and let $H$ be a coset of a subgroup  of 
$\cg{K}$.
If $\epsilon\in (0,1]$ 
and $x = \epsilon |\Delta_K|^{1/2}$, then
\begin{equation}
\label{eq:zetabound}
  \frac{1}{\#H} \sum_{
\atopx{\mathfrak A}{\inorm{\mathfrak A}\le x,[\mathfrak A]\in H}}
\left(\frac{x}{\inorm{\mathfrak A}}\right)^{1/2} \le  
c_2 \epsilon^{1/2} 
\max\left\{1,
\frac{|\Delta_K|^{1/2-  \delta/2}}{\#H}\right\}.
\end{equation}
\end{lem}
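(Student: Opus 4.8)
The plan is to separate $S:=\sum_{\inorm{\mathfrak A}\le x,\ [\mathfrak A]\in H}(x/\inorm{\mathfrak A})^{1/2}$ according to the characters of $\cg K$ and to estimate each character sum by a smoothed Perron formula together with Theorem \ref{thm:subconvexity}. Write $H=cG$ for a coset of a subgroup $G$ of $\cg K$, so that $[\cg K:G]=\#\cg K/\#H$, and recall the orthogonality relation $\mathbf 1_{[\mathfrak A]\in H}=[\cg K:G]^{-1}\sum_{\chi}\overline{\chi(c)}\chi([\mathfrak A])$, the sum running over the characters $\chi$ of $\cg K$ that are trivial on $G$. I would fix, once and for all, a smooth function $\psi\colon[0,\infty)\to[0,\infty)$ that is $\ge 1$ on $[0,1]$ and supported in $[0,2]$, and put $\phi(y)=y^{-1/2}\psi(y)$; then $\phi(y)\ge y^{-1/2}$ for $0<y\le 1$ and $\phi$ is supported in $(0,2]$, so
\[
S\ \le\ \sum_{\mathfrak A}\mathbf 1_{[\mathfrak A]\in H}\,\phi\!\left(\frac{\inorm{\mathfrak A}}{x}\right)\ =\ \frac{1}{[\cg K:G]}\sum_{\chi}\overline{\chi(c)}\sum_{\mathfrak A}\chi([\mathfrak A])\,\phi\!\left(\frac{\inorm{\mathfrak A}}{x}\right).
\]

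For each $\chi$, Mellin inversion turns the inner sum into $\frac{1}{2\pi i}\int_{(2)}L(s,\chi)\,\widetilde\phi(s)\,x^{s}\,ds$, where $\widetilde\phi(s)=\widetilde\psi(s-\tfrac12)$ is holomorphic for $\real s>\tfrac12$, has at most a simple pole at $s=\tfrac12$, and decays faster than any power of $|\imag s|$ on every vertical line $\real s=\sigma$ with $\sigma>\tfrac12$. I would then shift the contour to $\real s=\tfrac12+\eta$ for a small $\eta>0$ chosen below; since this line lies to the right of the pole of $\widetilde\phi$, the only residue picked up is that of $L(s,\chi)$ at $s=1$, present exactly when $\chi$ is trivial and equal to $\mathrm{Res}_{s=1}\zeta_{K}(s)\cdot\widetilde\psi(\tfrac12)\cdot x$. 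By the analytic class number formula ($K$ being totally imaginary over $F$) together with the bound $R_{K}\le 2R_{F}$ of Proposition 4.16 \cite{Washington}, this residue is $O(\epsilon\,\#\cg K)$ with constant depending only on $F$; weighted by $[\cg K:G]^{-1}=\#H/\#\cg K$ and divided by $\#H$ its total contribution to $\frac1{\#H}S$ is $O(\epsilon)$, hence $O(\epsilon^{1/2})$ as $\epsilon\le 1$. The horizontal pieces of the shift vanish in the limit since $L(s,\chi)$ grows only polynomially in $\imag s$ within the strip while $\widetilde\phi$ decays rapidly.

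It then remains to bound $\frac{1}{2\pi}\int_{\IR}L(\tfrac12+\eta+it,\chi)\,\widetilde\phi(\tfrac12+\eta+it)\,x^{1/2+\eta+it}\,dt$ uniformly in $\chi$. Here I would apply the Phragm\'en--Lindel\"of principle in the strip $\tfrac12\le\real s\le 2$, interpolating between the subconvex bound $|L(\tfrac12+it,\chi)|\le c_{1}(1+|t|)^{N}|\Delta_{K}|^{1/4-\delta}$ of Theorem \ref{thm:subconvexity} and the trivial bound $|L(\sigma+it,\chi)|\le\zeta_{K}(\sigma)\le\zeta(\sigma)^{[K:\IQ]}\ll 1$ on $\real s=2$ (for $\chi$ trivial one first removes the pole at $s=1$ by interpolating $(s-1)L(s,\chi)$ instead). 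This gives $|L(\tfrac12+\eta+it,\chi)|\ll(1+|t|)^{N'}|\Delta_{K}|^{e(\eta)}$ on the shifted line, where $e(\eta)=(\tfrac14-\delta)\bigl(1-\tfrac{2\eta}{3}\bigr)\to\tfrac14-\delta$ as $\eta\to 0$. Since $|x^{1/2+\eta}|=\epsilon^{1/2+\eta}|\Delta_{K}|^{1/4+\eta/2}\le\epsilon^{1/2}|\Delta_{K}|^{1/4+\eta/2}$ and $\int_{\IR}(1+|t|)^{N'}|\widetilde\phi(\tfrac12+\eta+it)|\,dt<\infty$ with a value depending only on $\psi$ and $\eta$, the integral is $\ll\epsilon^{1/2}|\Delta_{K}|^{e(\eta)+1/4+\eta/2}$. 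As $e(0)+\tfrac14=\tfrac12-\delta<\tfrac12-\tfrac\delta2$, I can fix $\eta>0$, depending only on $F$ through $\delta$, so small that $e(\eta)+\tfrac14+\tfrac\eta2\le\tfrac12-\tfrac\delta2$, all implied constants still depending only on $F$. Summing over the $[\cg K:G]$ characters, weighting by $[\cg K:G]^{-1}$ and dividing by $\#H$ bounds this part of $\frac1{\#H}S$ by $O\bigl(\epsilon^{1/2}|\Delta_{K}|^{1/2-\delta/2}/\#H\bigr)$. Adding the two contributions yields $(\ref{eq:zetabound})$ with $c_{2}$ depending only on $F$.

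The main obstacle is this last step. The Mellin transform $\widetilde\phi$ is inevitably singular on the critical line $\real s=\tfrac12$, so Theorem \ref{thm:subconvexity} cannot be used there directly; one must work on a line slightly to the right, and a Phragm\'en--Lindel\"of argument is required to propagate the subconvex saving off the critical line while keeping the exponent of $|\Delta_{K}|$ strictly below $\tfrac12-\tfrac\delta2$ — this is what dictates the bookkeeping of exponents and the smallness of $\eta$. One also has to check that every implied constant is uniform in the character $\chi$ and in the coset $H$; this holds because the bound in Theorem \ref{thm:subconvexity} is uniform in $\chi$ and $t$, and $H$ enters only through the harmless factors $[\cg K:G]^{-1}$ and $\#H$.
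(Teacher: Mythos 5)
Your proposal is correct and follows essentially the same route as the paper's proof: a smoothed weight $y^{-1/2}$ cut off at $2$, character orthogonality over the coset, Mellin inversion with a contour shift to a line $\real s=\tfrac12+\eta$ inside the critical strip, Phragm\'en--Lindel\"of interpolation of the Michel--Venkatesh subconvex bound (treating the trivial character via $(s-1)L(s,\chi)$), and the analytic class number formula with $R_K\le 2R_F$ for the pole term. The only differences are cosmetic (the paper takes the weight exactly equal to $y^{-1/2}$ on $(0,1]$ and optimises over a general $\sigma\in(\tfrac12,1)$ at the end, rather than your $\eta$-parametrisation), so no further changes are needed.
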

\begin{proof}
We fix a smooth test function $f:(0,\infty)\rightarrow [0,\infty)$ that satisfies
\begin{equation}
\label{eq:deffsubconvex}
  f(y) = \left\{
  \begin{array}{ll}
    y^{-1/2} & \mathrm{if}\; y\in (0,1],\\
    0            &\mathrm{if}\; y\ge 2. 
  \end{array}
\right. 
\end{equation}
Its Mellin transform
\begin{equation*}
  \tilde f(s) = \int_0^\infty f(y)y^{s-1}dy 
\end{equation*}
exists if $\real{s} > 1/2$ and the Mellin inversion formula holds, \textit{cf.} Proposition
9.7.7 \cite{Cohen:II}. Using in addition Theorem
9.7.5(4) \textit{loc. cit.} we see that $\tilde f$ decays rapidly;
here this means
that if $\sigma>1/2$ is fixed and $N\ge 1$ 
then $|\tilde f(\sigma+it)|(1+|t|)^{N}$
is a bounded  function in $t\in\IR$.

For a real number $x>0$ and a character 
$\chi:\cg{K}\rightarrow \IC^\times$, we define
\begin{equation}
\label{eq:Sxpsi}
  S(x,\chi) = 
\sum_{\mathfrak A} \chi([\mathfrak A]) f\left(\frac{\inorm{\mathfrak A}}{x}\right).
\end{equation}
The sum is finite since $f$ vanishes at large
arguments.
If $\sigma\in\IR$ then $\int_{(\sigma)}$ signifies the integral along
the vertical line $\real{s} = \sigma$. 
The Mellin inversion formula leads to
\begin{alignat*}1
  S(x,\chi) &=  \frac{1}{2\pi i}\sum_{\mathfrak A}
\chi([\mathfrak A]) \int_{(2)} 
\tilde f(s)  \left(\frac{x}{\inorm{\mathfrak A}}\right)^s ds 
= \frac{1}{2\pi i} \int_{(2)}
\tilde f(s) 
\left(\sum_{\mathfrak A}
\frac{\chi([\mathfrak A])}{\inorm{\mathfrak A}^s}\right) x^s ds;
\end{alignat*}
the sum and the integral commute by the Dominant Convergence Theorem.
The inner sum is the $L$-function $L(s,\chi)$, hence
\begin{equation*}
  S(x,\chi) = \frac{1}{2\pi i}\int_{(2)} \tilde f(s) L(s,\chi)x^s
  ds. 
\end{equation*}

Let $H_0$ denote the translate of $H$ containing the unit element; it
is a subgroup of $\cg{K}$.
Suppose $\chi$ is any character with $\chi|_{H_0}=1$. 
The function $|L(\sigma + it,\chi)|$ has at most polynomial growth in the imaginary
part $t$ if $\sigma \in (1/2,1)$ is  fixed.
By a contour shift and by the decay property of $\tilde f$
we arrive at
\begin{equation*}
  S(x,\chi) =\frac{1}{2\pi i}\int_{(\sigma)}
\tilde f(s) L(s,\chi)x^s ds + \xi(\chi) \tilde f(1) 
\left(\res{s=1}\zeta_K(s)\right) x
\end{equation*}
where $\xi(\chi_0)=1$ for $\chi_0$ the trivial character
 and $\xi(\chi)=0$ if $\chi\not=\chi_0$.

Here and below $c_3,c_4,c_5,c_6,$ and $c_7$ denote positive constants that depend
only on $F , f, \delta,$ and $\sigma$ but not on $K,\chi,\epsilon,$ or
$H$.

Let $h_K$ denote the class number of $K$, $R_K$ the regulator of $K$,
and $\omega_K$ the number of roots of unity in $K$. 
The residue of $\zeta_K$ at $s=1$
is positive and at most $c_3 h_K R_K / |\Delta_K|^{1/2}$
by the analytic class number formula. 
The unit groups of $K$ and $F$ have equal rank and as in the proof of
Lemma \ref{lem:largegaloisorbit} we have
 $R_K\le c_4$ where $c_4$ may depend on $F$. 
Hence
\begin{equation}
\label{eq:Sxchibound}
  |S(x,\chi)|\le 
\frac{1}{2\pi}\int_{(\sigma)}
|\tilde f(s) L(s,\chi)|x^{\sigma} ds + 
c_5 \xi(\chi)\frac{h_K}{|\Delta_K|^{1/2}}x
\end{equation}
with $c_5 = c_3 c_4|\tilde f(1)|$.

Soon we will apply 
 the Phragm\'en-Lindel\"of Principle, \textit{cf.} Theorem 5.53 \cite{IK},  to bound $|L(s,\chi)|$ from above
 in terms of $|L(1/2+it,\chi)|$ and $|L(2+it,\chi)|$; here $s=\sigma +
 it$. Indeed, the bound
$|L(2+it,\chi)|\le \zeta(2)^{[K:\IQ]}$ is elementary
but to bound $|L(1/2+it,\chi)|$ we need 
 Theorem \ref{thm:subconvexity}. We abbreviate 
 \begin{equation}
\label{def:l}
 l(\sigma) = \frac 23 (2-\sigma)
 \end{equation}
whose graph linearly interpolates $l(1/2) = 1$ and $l(2)=0$.

We suppose first
that $\chi\not=\chi_0$. Then $L(\cdot,\chi)$ is an entire
function and we may apply the Phragm\'en-Lindel\"of Principle
directly. So 
\begin{equation*}
|L(\sigma + it,\chi)| \le c_1^{l(\sigma)}\zeta(2)^{[K:\IQ](1-l(\sigma))} (1+|t|)^{N l(\sigma)}
 |\Delta_K|^{(1/4-\delta)l(\sigma)} 
\end{equation*}
for all $t\in\IR$, where we may assume $c_1\ge 1$. To treat the trivial character we work with the
entire function $L(s,\chi)(s-1)$. 
As $|\sigma + it-1|\ge 1-\sigma > 0$ we obtain 
\begin{equation*}
|L(\sigma + it,\chi_0)| \le \frac{1}{1-\sigma} c_1^{l(\sigma)}\zeta(2)^{[K:\IQ](1-l(\sigma))} (1+|t|)^{N l(\sigma)+1}
 |\Delta_K|^{(1/4-\delta)l(\sigma)}
\end{equation*}
where that additional
$1+|t|$ appears since 
$|s-1|\le 1+|\!\imag{s}|$ if $\real{s} \in \{1/2,2 \}$. 

In any case, we have
$|L(\sigma+it,\chi)|\le c_6 (1-\sigma)^{-1}(1+|t|)^{Nl(\sigma)+1}
 |\Delta_K|^{(1/4-\delta)l(\sigma)}$ with $c_6
 =c_1 \zeta(2)^{[K:\IQ]}$. Together with (\ref{eq:Sxchibound}) and the
 decay property of $\tilde f$ we obtain
 \begin{equation*}
 |S(x,\chi)| \le c_7\left(|\Delta_K|^{(1/4-\delta)l(\sigma)}x^{\sigma}
+\xi(\chi) \frac{h_K}{|\Delta_K|^{1/2}}x\right).
 \end{equation*}
We substitute $x=\epsilon|\Delta_K|^{1/2}$ to find
 \begin{alignat}1
\nonumber
 |S(x,\chi)| &\le c_7\left(|\Delta_K|^{(1/4-\delta)l(\sigma)+\sigma/2}\epsilon^\sigma
+\xi(\chi) h_K \epsilon\right) 
\\
&\le
\label{eq:Sxchilastbound}
c_7 \epsilon^{1/2}
\left(|\Delta_K|^{(1/4-\delta)l(\sigma)+\sigma/2}
+\xi(\chi) h_K \right)
\end{alignat}
where we used $\epsilon\le\epsilon^\sigma\le\epsilon^{1/2}$ as
 $\sigma\in (1/2,1)$ and $\epsilon\in (0,1]$.

We consider the mean
\begin{equation}
\label{eq:Sxdef}
  S(x) = \frac{1}{[\cg{K}:H_0]} \sum_{\chi|_{H_0}=1}
\overline{\chi(H)} S(x,\chi)
\end{equation}
over all characters $\chi$ of $\cg{K}$ that are constant on $H$. 
Since  $\chi$ takes values on the unit circle we may bound
\begin{equation*}
  |S(x)| \le \frac{[\cg{K}:H_0]-1}{[\cg{K}:H_0]}\max\{|S(x,\chi)|;\,\,\chi|_{H_0}=1 \text{ and }\chi\not=\chi_0\}
  + \frac{|S(x,\chi_0)|}{[\cg{K}:H_0]}.
\end{equation*}
We observe that $[\cg{K}:H_0] = h_K /\#H_0$. 
The bound (\ref{eq:Sxchilastbound}) yields
\begin{equation}
\label{eq:Sxbound}
|S(x)|\le
 c_7 \epsilon^{1/2} \left(|\Delta_K|^{(1/4-\delta)l(\sigma)+\sigma/2}
+\#H_0\right). 
\end{equation}

We insert
the finite sum (\ref{eq:Sxpsi}) into (\ref{eq:Sxdef}) and rearrange
the order of summation
to obtain
\begin{equation*}
  S(x) = \frac{1}{[\cg{K}:H_0]}
\sum_{\mathfrak A}\left(\sum_{\chi|_{H_0}=1} \overline{\chi(H)}\chi([\mathfrak A])
\right)f\left(\frac{\inorm{\mathfrak A}}{x}\right).
\end{equation*}
For $\chi$ from the inner sum we have $\overline{\chi(H)}\chi([\mathfrak A]) =
\chi([\mathfrak B])$ for a fractional ideal
$\mathfrak B$ with $[\mathfrak A\mathfrak{B}^{-1}]\in H$. But
$\sum_{\chi|_{H_0}=1} \chi([\mathfrak B])$ equals $[\cg{K}:H_0]$ if $[\mathfrak B]\in H_0$
 and $0$ otherwise. 
Hence
\begin{equation*}
  S(x) = \sum_{\atopx{\mathfrak A}{[\mathfrak A]\in H}} f\left(\frac{\inorm{\mathfrak A}}{x}\right)
\ge \sum_{\atopx{\mathfrak A}{\inorm{\mathfrak A}\le x,[\mathfrak A]\in H}}
f\left(\frac{\inorm{\mathfrak A}}{x}\right)
= \sum_{\atopx{\mathfrak A}{\inorm{\mathfrak A}\le x,[\mathfrak A]\in H}}
 \left(\frac{x}{\inorm{\mathfrak A}}\right)^{1/2}
\end{equation*}
as $f$ is non-negative and by  (\ref{eq:deffsubconvex}).
We divide by $\# H=\#H_0$ and use  (\ref{eq:Sxbound})  to obtain
\begin{equation*}
  \frac{1}{\# H}\sum_{\atopx{\mathfrak A}{\inorm{\mathfrak A}\le x,[\mathfrak A]\in H}}
\left(\frac{x}{\inorm{\mathfrak A}}\right)^{1/2}
\le c_7 \epsilon^{1/2}
\left(\frac{|\Delta_K|^{ (1/4-\delta) l(\sigma)+\sigma/2 }}{\# H}
+1\right).
\end{equation*}

The lemma follows as we may fix $\sigma \in (1/2,1)$ with 
$(1/4-\delta)l(\sigma)+\sigma/2 \le 1/2-\delta/2$. 
\end{proof}

Next, we state two simple consequences of the previous proposition
that we need for our main result. 
Recall that the norm $\inorm{[\mathfrak A]}$ of an ideal class in
$[\mathfrak A]\in \cg{K}$ is the smallest
norm of  a representative. 
\begin{prop}
  \label{prop:cgpbound}
  Let $F$ and $\delta$ be as in Lemma \ref{lem:cgpbound}. 
There is a constant
 $c_8>0$ depending only on $F$
with the following property. 
Say $K$ is a totally imaginary quadratic extension of $F$
and let $H$ be a coset of a subgroup  of 
$\cg{K}$, then the following two properties hold. 
  \begin{enumerate}
  \item [(i)] We have
  \begin{equation*}
  \frac{1}{\#H} \sum_{[\mathfrak A]\in H}
\left(\frac{|\Delta_K|^{1/2}}{\inorm{[\mathfrak A^{-1}]}}\right)^{1/2} \le  
c_8 \max\left\{1,
\frac{|\Delta_K|^{1/2- \delta/2}}{\#H}\right\}.
  \end{equation*}
\item[(ii)]
 Let $\epsilon \in (0,1]$, then 
  \begin{equation*}
    \frac{1}{\#H}\#\left\{[\mathfrak A]\in H;\,\, \inorm{[\mathfrak A^{-1}]}\le \epsilon
    |\Delta_K|^{1/2} \right\}
\le c_2 \epsilon^{1/2} \max\left\{1,\frac{{|\Delta_K|^{1/2 - 
    \delta/2}}}{\# H}\right\}.
  \end{equation*}
  \end{enumerate}
\end{prop}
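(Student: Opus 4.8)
The plan is to deduce both parts directly from Lemma~\ref{lem:cgpbound} by replacing a sum or count over ideal classes by the corresponding sum or count over \emph{integral ideals of bounded norm}. Two elementary facts are used. First, if $H$ is a coset of a subgroup $H_0\subseteq\cg{K}$, then $H^{-1}=\{[\mathfrak{A}]^{-1}:[\mathfrak{A}]\in H\}$ is again a coset of $H_0$ (since $\cg{K}$ is abelian and $H_0^{-1}=H_0$), and $\#H^{-1}=\#H$; moreover $\inorm{[\mathfrak{A}^{-1}]}\le x$ holds if and only if the class $[\mathfrak{A}]^{-1}$ contains an integral ideal of norm $\le x$, by the very definition of the norm of a class. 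Second, for part~(i) I will use Minkowski's bound: every ideal class of $\O{K}$ contains an integral ideal of norm at most $|\Delta_K|^{1/2}$, the Minkowski constant $(4/\pi)^{r_2}[K:\IQ]!/[K:\IQ]^{[K:\IQ]}$ being $<1$ as soon as $[K:\IQ]\ge 2$.

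For part~(ii), fix $\epsilon\in(0,1]$ and put $x=\epsilon|\Delta_K|^{1/2}$. For every class $[\mathfrak{A}]\in H$ with $\inorm{[\mathfrak{A}^{-1}]}\le x$ choose an integral ideal $\mathfrak{B}$ in the class $[\mathfrak{A}]^{-1}$ with $\inorm{\mathfrak{B}}=\inorm{[\mathfrak{A}^{-1}]}\le x$. These ideals are pairwise distinct, they all lie in $H^{-1}$, and they all have norm $\le x$, so
\begin{equation*}
\#\{[\mathfrak{A}]\in H:\inorm{[\mathfrak{A}^{-1}]}\le x\}\le \#\{\mathfrak{B}:[\mathfrak{B}]\in H^{-1},\ \inorm{\mathfrak{B}}\le x\}\le \sum_{\substack{\mathfrak{B}:[\mathfrak{B}]\in H^{-1}\\ \inorm{\mathfrak{B}}\le x}}\left(\frac{x}{\inorm{\mathfrak{B}}}\right)^{1/2},
\end{equation*}
the last inequality because each summand is $\ge 1$. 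Applying Lemma~\ref{lem:cgpbound} to the coset $H^{-1}$ with this $\epsilon$ and dividing by $\#H=\#H^{-1}$ gives (ii) with the constant $c_2$.

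For part~(i) I argue in the same way. Letting $\mathfrak{c}=[\mathfrak{A}]^{-1}$, which runs through $H^{-1}$ as $[\mathfrak{A}]$ runs through $H$, and choosing for each $\mathfrak{c}$ an integral ideal $\mathfrak{B}_{\mathfrak{c}}\in\mathfrak{c}$ with $\inorm{\mathfrak{B}_{\mathfrak{c}}}=\inorm{\mathfrak{c}}\le|\Delta_K|^{1/2}$, the $\mathfrak{B}_{\mathfrak{c}}$ are pairwise distinct, lie in $H^{-1}$, and have norm $\le|\Delta_K|^{1/2}$; hence
\begin{equation*}
\frac{1}{\#H}\sum_{[\mathfrak{A}]\in H}\left(\frac{|\Delta_K|^{1/2}}{\inorm{[\mathfrak{A}^{-1}]}}\right)^{1/2}= \frac{1}{\#H^{-1}}\sum_{\mathfrak{c}\in H^{-1}}\left(\frac{|\Delta_K|^{1/2}}{\inorm{\mathfrak{B}_{\mathfrak{c}}}}\right)^{1/2}\le \frac{1}{\#H^{-1}}\sum_{\substack{\mathfrak{B}:[\mathfrak{B}]\in H^{-1}\\ \inorm{\mathfrak{B}}\le|\Delta_K|^{1/2}}}\left(\frac{|\Delta_K|^{1/2}}{\inorm{\mathfrak{B}}}\right)^{1/2}.
\end{equation*}
The right-hand side is precisely the quantity bounded by Lemma~\ref{lem:cgpbound} applied to $H^{-1}$ with $\epsilon=1$ and $x=|\Delta_K|^{1/2}$, so it is at most $c_2\max\{1,|\Delta_K|^{1/2-\delta/2}/\#H^{-1}\}$, which is (i) with $c_8=c_2$.

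The one place that needs attention is that this reduction must keep the ideal norms at or below $|\Delta_K|^{1/2}$, because Lemma~\ref{lem:cgpbound} is phrased with $x=\epsilon|\Delta_K|^{1/2}$ and $\epsilon\in(0,1]$; the Minkowski bound supplies exactly this. Even if the Minkowski constant were merely bounded by some $C_F\ge 1$ depending on $F$, the argument in the proof of Lemma~\ref{lem:cgpbound} would go through with $\epsilon$ replaced by $C_F$ at the cost of an $F$-dependent constant, so nothing essential would change. In other words, there is no genuine analytic obstacle in this proposition: all of the real work — the subconvexity input of Michel and Venkatesh and the contour shift — is already contained in Lemma~\ref{lem:cgpbound}, and what remains is the bookkeeping of passing to the inverse coset and invoking Minkowski.
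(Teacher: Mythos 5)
Your proof is correct and takes essentially the same route as the paper: both parts are reduced to Lemma \ref{lem:cgpbound} applied to the inverse coset $H^{-1}$, with Minkowski's theorem guaranteeing that each class has an integral representative of norm at most $|\Delta_K|^{1/2}$, and part (ii) following because every term in the lemma's sum is at least $1$. The only (cosmetic) difference is that the paper feeds the lemma the Minkowski constant as $\epsilon$, whereas you take $\epsilon=1$ after noting the Minkowski constant is at most $1$.
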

\begin{proof}
Let $d=[K:\IQ]$. 
By a theorem of Minkowski, any ideal class of $K$ is represented by an
ideal whose norm is at most $\epsilon|\Delta_K|^{1/2}$
where $\epsilon = \frac{d!}{d^d}(\frac 4\pi)^{r_2}$ 
and $2r_2$ is the the number of non-real embeddings
$K\rightarrow\IC$. It is well-known that $\epsilon\le
1$. 
Part (i) follows from Lemma \ref{lem:cgpbound} applied to $x =\epsilon|\Delta_K|^{1/2}$
and to the coset $H^{-1}$ since $\epsilon$ depends only on $F$. 

Part (ii)
follows from Lemma \ref{lem:cgpbound} applied to $H^{-1}$ since the terms in the sum on the left of
 (\ref{eq:zetabound}) are at least $1$. 
\end{proof}


In our application, the coset $H$ will generally have more than
$|\Delta_K|^{1/2 - \delta/2}$ elements. 
In this case, the upper bound in part (i) simplifies to $c_8$, which
depends only on $F$. 
Also, if
$\epsilon$ is sufficiently small in part (ii), then find that
 only a small proportion of elements of
$H^{-1}$ will have norm less than $\epsilon|\Delta_K|^{1/2}$. 
Geometrically speaking, these ideal classes correspond to Galois
conjugates of a CM abelian variety that lie close to the cusp in the
moduli space. So  only a small proportion
of said conjugates are near the cusp.


\section{Proof of the theorems}
\label{sec:proofs}
We begin this section by proving Theorem \ref{thm:discbound}. 

Let $F$ be as in the hypothesis. 
We fix representatives $\eta_m\in\IP^1(F)$ of cusps
of $\widehat\Gamma(\diff{F/\IQ}^{-1})\backslash \IH^2$  as in
Section \ref{sec:hmv}. 
In particular, $\eta_1 = \infty$. 
We will work with a parameter
$\epsilon \in (0,1]$ that depends only on  $F$ 
and a second parameter $\kappa \in(0,1]$ 
that only depends on $F$ and $\epsilon$. We regard $\kappa$ as small
with respect to 
$\epsilon$.
We will see how to fix these parameters in due course. 

Let $C$ be as in the theorem and suppose $k\subset\IC$ is a
number field over which $C$ is defined which we will increase at will. Let $K$ be the CM-field
of $\jac{C}$. We may suppose $k\supseteq K$.

As discussed in greater detail in the introduction, the basic strategy is to let the lower bound coming from Proposition \ref{Colmez lower bound} compete with an
upper bound  of  the Faltings
height.
To estimate the Faltings height from above we need its expression 
in Corollary \ref{cor:faltingsheight}. 
Observe that this corollary is applicable as, after possibly increasing
$k$,
the classical Theorem of Serre and Tate \cite{SerTat} states that the CM abelian
variety
 $\jac{C}$ has good reduction
everywhere.
We will show that the archimedean contribution to the Faltings height
is negligible when compared to the non-archimedean contribution.
We use notation introduced in Theorem \ref{hyperelliptic} and 
Corollary \ref{cor:faltingsheight}.
Observe that $k$ satisfies the hypothesis of the theorem after passing
to a finite field extension. By part (ii) of the theorem, the
normalised norm in (\ref{eq:DiscKlb}) does not change after
passing to a further finite extension of  $k$. This settles the last statement
of Theorem \ref{thm:discbound}.

We thus decompose 
$h(\jac{C})= h^0+h_1^\infty+h_2^\infty+h_3^\infty+h_4^\infty - \frac 45 \log 2 - \log \pi$ where
\begin{equation*}
  h^0 = \frac{1}{[k:\IQ]} \sum_{\nu\in M^0_k} \frac{1}{60}
\log\inorm{\Delta^0_{{\rm min}}(C)}
\end{equation*}
is the finite part and 
\begin{equation}
\label{eq:sumarchplaces}
h_1^\infty+h_2^\infty+h_3^\infty+h_4^\infty = -\frac{1}{[k:\IQ]} \sum_{\sigma:k\rightarrow\IC} 
\frac{1}{10} \log\left(|\chi_{10}(Z_\sigma)| \det(\imagS
Z_\nu)^5 \right)
\end{equation}
and the single $h_m^\infty$ for $m\in\{1,2,3,4\}$ are determined as follows.

Shimura's Theorem \ref{thm:shimura} describes the period matrices
coming from a Galois orbit that fixes  the reflex field $K^*$. Observe that
$K^*=K$ by Lemma \ref{lem:reflexlb1} because $K/\IQ$ is cyclic. 
So (\ref{eq:sumarchplaces}) holds where each
\begin{equation}
\label{eq:decomp4}
 h_m^\infty =  -\frac{1}{10\# {\mathrm{N}}_{\Phi_m^*}(\cg{K})}
\sum_{[\mathfrak A] \in {\mathrm{N}}_{\Phi_m^*}(\cg{K}) [\mathfrak B_m]}
\log (|\chi_{10}(Z_{\mathfrak A})|\det(\imagS Z_{\mathfrak
A})^{5})
\end{equation}
corresponds to one of the four  cosets of ${\rm Aut}(\IC/K)$ in ${\rm Aut}(\IC/\IQ)$; 
here $\Phi_m$ is a CM-type of $K$ and $\mathfrak B_m\subset\O{K}$ is a
fractional ideal. 
Observe that the terms on  the right of (\ref{eq:decomp4}) do
 not depend on the choice of a
representative
$\mathfrak A\in [\mathfrak A]$. Indeed,
we already observed that $Z\mapsto |\chi_{10}(Z)|\det(\imagS Z)^5$
is $\SP{4}{\IZ}$-invariant in Section \ref{sec:hyperelliptic}.

For any $\mathfrak A$ as in the sum (\ref{eq:decomp4}),
 Proposition \ref{prop:preparehmv}
provides 
$\tau_{\mathfrak A}$ with $\Phi_m(\tau_{\mathfrak A})$
in the fundamental set $\hfd(\diff{F/\IQ}^{-1})$ from 
Section \ref{sec:hmv}. 
The period matrices $Z_{\mathfrak A}$ are as described in
(\ref{eq:periodmatrix}).

Later on we will show  that there exists $c(\epsilon,F) >0$ depending
only on $\epsilon$ and $F$ such that  
\begin{equation}
\label{eq:wtshm}
h^\infty_m \le \epsilon^{1/2}
\log\Delta_K + c(\epsilon,F) \quad\text{for each}\quad 1\le m\le 4.
\end{equation}
Our theorem follows from this inequality and from Proposition \ref{Colmez lower bound}.

 Of course, all
$m$ can be treated in a similar manner. 
So we simplify notation by abbreviating $h^\infty=h_m^\infty$ and writing 
 $H$ for ${\mathrm{N}}_{\Phi_m^*}(\cg{K})
[\mathfrak B_m]$ and $\Phi$ for the CM-type $\Phi_m$.
Observe that $H$ is a coset in the class group $\cg{K}$.

In this new notation we have
\begin{equation*}
 h^\infty =  -\frac{1}{10 \# H}
\sum_{[\mathfrak A] \in H}
\log (|\chi_{10}(Z_{{\mathfrak A},red})|\det(\imagS Z_{\mathfrak A,red})^5)
\end{equation*}
where $Z_{{\mathfrak A},red}\in \sfd_2$ is in the $\SP{4}{\IZ}$-orbit of
$Z_{\mathfrak A}$.

Below
 $c_1,c_2,\ldots,c_8$ denote positive constants that
only depend on the real quadratic field $F$.

Taking the sign in  
 $h^\infty$ into account, we would like to  bound each logarithm in $h^\infty$ from
\textit{below} using Proposition \ref{minoration delta}.
If
 $z_{\mathfrak A,12}$ is the off-diagonal entry of the Siegel
reduced matrix $Z_{\mathfrak A,red}$, then 
$z_{\mathfrak A,12}\not=0$. 
Indeed, otherwise $Z_{\mathfrak A,red}$ is diagonal. But this is
 impossible because $\jac{C}$ is not a product of elliptic curves due
 to the fact that $K/\IQ$ is cyclic, see
Corollary 11.8.2 \cite{CAV}  and Lemma \ref{lem:reflexlb1}. 
Another way to see $z_{\mathfrak A,12}\not=0$ is by noting that
 $\chi_{10}$ restricted  to $\mathcal{F}_2$ vanishes  only on diagonal matrices
and by using the proof of Proposition \ref{hyperelliptique infinie}. 
By Proposition \ref{minoration delta}  we obtain
\begin{equation*}
 h^\infty  \le c_1 + \frac{1}{10 \# H} \sum_{[\mathfrak A] \in H}
\left( \log\max\{1,|z_{\mathfrak A,12}|^{-2}\} + 2\pi \Tr(\imagS Z_{\mathfrak
  A,red})- 5 \log\det(\imagS Z_{\mathfrak A,red})\right),
\end{equation*}
Since $Z_{\mathfrak A,red}$ is Siegel reduced we have $\det(\imagS Z_{\mathfrak A,red})\ge
c_2$. 
So the average value of $-\log \det(\imagS Z_{\mathfrak A,red})$ 
is bounded from above uniformly. After possibly increasing $c_1$ we find
\begin{equation*}
 h^\infty  \le c_1 + \frac{1}{10 \# H} \sum_{[\mathfrak A] \in H}
\left( \log\max\{1,|z_{\mathfrak A,12}|^{-2}\} + 2\pi \Tr(\imagS Z_{\mathfrak
  A,red})\right).
\end{equation*}

Next we use Lemma \ref{lem:rgoupperbound}(i)  to bound
each $\Tr(\imagS Z_{\mathfrak A,red})$ from above to get
\begin{equation*}
 h^\infty  \le c_1+ c_3\frac{1}{\# H} \sum_{[\mathfrak A] \in H}
\left(\log\max\{1,|z_{\mathfrak A,12}|^{-1}\} +
\left(\frac{\Delta_K^{1/2}}{\inorm{[\mathfrak A^{-1}]}}\right)^{1/2}\right).
\end{equation*}

We continue by tackling the terms $\Delta_K^{1/2}/\inorm{[\mathfrak
A^{-1}]}$.
The trivial bound
 that follows from
$\inorm{[\mathfrak A^{-1}]} \ge 1$ is of little use here
as it leads to an upper bound for $h^\infty$ of the magnitude
$\Delta_K^{1/4}$. 
When  compared with the logarithmic lower bound coming
from Proposition \ref{Colmez lower bound} this is not good enough to conclude (\ref{eq:wtshm}). 
We need the subconvexity bound.
Proposition \ref{prop:cgpbound}(i) 
combined with the lower bound for $\#H$ from
Lemma \ref{lem:largegaloisorbit} implies 
 that the average contribution of
$(\Delta_K^{1/2}/\inorm{\mathfrak A})^{1/2}$ is bounded from
above. Thus 
\begin{equation}
\label{eq:hinfinityb1}
 h^\infty \le c_4 + c_3\frac{1}{\# H} \sum_{[\mathfrak A] \in H}
\log\max\{1,|z_{\mathfrak A,12}|^{-1}\}.
\end{equation}


Recall that $z_{\mathfrak A,12}$ is a non-zero algebraic number of
absolute logarithmic Weil height at most $H(Z_{\mathfrak A,red})$.
As $K/\IQ$ is Galois we conclude that $[\IQ(z_{\mathfrak
A,12}):\IQ]\le 4$ using the expression (\ref{eq:periodmatrix}). 
The fundamental inequality of Liouville found in 1.5.19 \cite{BG} thus implies
$|z_{\mathfrak A,12}|\ge H(Z_{\mathfrak A,red})^{-4}$.
The height of this reduced period matrix is bounded from above
polynomially in $\Delta_K$ by Lemma \ref{lem:pilatsimerman}.
Therefore, taking the logarithm yields
\begin{equation}
\label{eq:liouvillez12}
\log |z_{\mathfrak A,12}| \ge -c_5 \log\Delta_K.
\end{equation}
We use this  inequality  to bound from above the terms in
(\ref{eq:hinfinityb1})
for which $\tau_{\mathfrak A}$ is close to one of the cusps,
\textit{i.e.} $\max_{m} \mu(\eta_m,\Phi(\tau_{\mathfrak A}))  > c_6 \epsilon^{-1}$
with $c_6=c$ the constant from Lemma \ref{lem:rgoupperbound}(ii).
We have
\begin{alignat*}1
 h^\infty &\le c_4 + c_5\left(\frac{1}{\#
 H} \sum_{\max_{m} \mu(\eta_m,\Phi(\tau_{\mathfrak A}))  > c_6 \epsilon^{-1}}1\right)
\log\Delta_K 
+c_5\frac{1}{\# H} \sum_{(*)} 
\log\max\{1,|z_{\mathfrak A,12}|^{-1}\}
\end{alignat*}
where $(*)$ abbreviates the condition 
$\max_m \mu(\eta_m,\Phi(\tau))  \le c_6 \epsilon^{-1}$
here and in the sums below.      
Observe that being close to a cusp entails
 $\inorm{[\mathfrak A^{-1}]} <\epsilon \Delta_K^{1/2}$ by Lemma \ref{lem:rgoupperbound}(ii).
Part (ii) of  Proposition \ref{prop:cgpbound} tells us that not
too many
$\tau_{\mathfrak A}$ are close to a cusp. We obtain
\begin{equation*}
 h^\infty \le c_4 + c_7\epsilon^{1/2} \max\left\{1,\frac{
 \Delta_K^{1/2-\delta/2}}{\# H} \right\}
\log\Delta_K 
+c_5\frac{1}{\# H} \sum_{(*)}
\log\max\{1,|z_{\mathfrak A,12}|^{-1}\}.
\end{equation*}
We apply Lemma  \ref{lem:largegaloisorbit} again to bound
$\Delta_K^{1/2-\delta/2}/\#H$ from above. Thus
\begin{equation*}
 h^\infty \le c_4 + c_7\epsilon^{1/2}\log\Delta_K
+c_5 \frac{1}{\# H} \sum_{(*)} 
\log\max\{1,|z_{\mathfrak A,12}|^{-1}\}.
\end{equation*}

It remains to bound the sum on the right. If some $|z_{\mathfrak A,12}|$ is small, then the
corresponding conjugate of $\jac{C}$ is close to a product of elliptic
curves in the appropriate coarse moduli space. 
To measure this proximity we require the second parameter $\kappa\in (0,1]$.
We split  the upper bound for $h^\infty$ up into a subsum
where
$|z_{\mathfrak A,12}|> \kappa$ holds and one where it does not. 
The first subsum is at most $|\log\kappa|$ and so
\begin{equation*}
 h^\infty \le c_4  +
 c_7\epsilon^{1/2}\log\Delta_K 
+ c_5|\log\kappa|+\frac{c_5}{\# H} \sum_{\substack{(*) \\ |z_{\mathfrak A,12}|\le\kappa }}
(-\log |z_{\mathfrak A,12}|).
 \end{equation*}
We use (\ref{eq:liouvillez12}) again to obtain
\begin{equation}
\label{eq:hinftybd2}
 h^\infty \le c_8(1 + |\log\kappa|) +
c_8\left(
 \epsilon^{1/2}
+\frac{1}{\# H} \sum_{\substack{ (*)\\ |z_{\mathfrak A,12}|\le\kappa }}
1\right)\log\Delta_K.
 \end{equation}

To conclude we must bound the  remaining sum in (\ref{eq:hinftybd2}).
So say $[\mathfrak A]$ corresponds to one of its terms.
The property $(*)$ implies that
$\Phi(\tau_{\mathfrak A})$ is bounded away from all cusps. 
So $\Phi(\tau_{\mathfrak A})$ lies in a compact subset $\mathcal K$ of
$\IH^2$, \textit{cf.} Proposition \ref{prop:fundamentalset}
which depends only on  $\epsilon$.
Being bounded away from the cusps entails that reducing
$Z_{\mathfrak A}$ to $Z_{\mathfrak A,red}$ requires only a
 finite subset of $\SP{4}{\IZ}$. Indeed, 
we apply
Lemma \ref{lem:Sigmaset} to $M = c_6 \epsilon^{-1}$ to obtain a finite
set $\Sigma \subset \SP{4}{\IZ}$, which depends only on $c_6$ and $\epsilon$,
such that $Z_{\mathfrak A,red} = \gamma Z_{\mathfrak A}$ for some
$\gamma\in \Sigma$. 
Therefore,
\begin{equation*}
Z_{\mathfrak A} \in \bigcup_{\gamma\in\Sigma} \gamma^{-1} \mathcal A(\kappa) 
\end{equation*}
where
\begin{equation*}
\mathcal A(\kappa)=
\left\{\left(
\begin{array}{cc}
z_1 & z_{12} \\ z_{12} & z_2
\end{array}\right)
\in \sfd_2;\,\,  |z_{12}|\le \kappa
\right\}.
\end{equation*}
Each $\mathcal A(\kappa)$ is closed in
$\mat{\IC}{2}$ and $\bigcap_{\kappa >0 }\mathcal A(\kappa)$
contains only diagonal elements.  

We can reconstruct $\Phi(\tau_{\mathfrak A})$ from $Z_{\mathfrak A}$ as
follows. 
The expression  (\ref{eq:periodmatrix}) determines
 $\# \cO_{F,+}^\times / (\cO_{F}^\times)^2$ holomorphic
mappings $\IH^2\rightarrow \IH_2$. So $\Phi(\tau_{\mathfrak A})$ lies in the
 pre-image of
$ \bigcup_{\gamma\in\Sigma} \gamma^{-1} \mathcal A(\kappa)$ under one
 of them. Recall that $\Phi(\tau_{\mathfrak A})$ lies in the compact set
 $\mathcal K$.
As $\kappa\rightarrow 0$ the hyperbolic measure of the intersection of
the said pre-image and $\mathcal K$ tends to $0$. 

Galois orbits are  equidistributed by  Zhang's Corollary 3.3 \cite{zhang:equicm} and Theorem 1.2 \cite{MV}
by Michel-Venkatesh. In particular, 
\begin{equation*}
\limsup_{\Delta_K\rightarrow
 +\infty} \frac{1}{\#H} \#\left\{\tau_{\mathfrak A};\,\, 
[\mathfrak A]\in H
\text{ and }
\max_m \mu(\eta_m,\Phi(\tau_{\mathfrak A})) \le c_6\epsilon^{-1}
\text{ and }
|z_{\mathfrak A,12}|\le \kappa \right\} 
\end{equation*}
is bounded above by an expression that tends to $0$ as
$\kappa\rightarrow 0$. We fix $\kappa$ sufficiently
small in terms of $\epsilon$ such that this limes superior is at most
$\epsilon^{1/2}$. 

We can now continue bounding   (\ref{eq:hinftybd2}) from above. If $\Delta_K$ is sufficiently
large with respect to $\epsilon$, 
then the number of terms in the sum is at most $2\epsilon^{1/2}\#H$ by
the last paragraph. Therefore, 
\begin{equation*}
h^\infty \le c_8(1+|\log \kappa| + 3\epsilon^{1/2} \log\Delta_K).
\end{equation*}
If $\Delta_K$ is not large enough, we have a similar bound
with a possibly larger $c_8$. We have thus verified
the inequality (\ref{eq:wtshm})  and therefore  Theorem \ref{thm:discbound}. \qed

\begin{proof}[Proof of Theorem \ref{thm:finiteness}]
We have seen essentially the same argument  in the introduction, let us repeat
it here again.
Let $F$ and $C$ be as in the theorem. Then
we take $C$ as defined over a sufficiently large number field $k$ with
 $\Delta^0_{\mathrm{min}}(C)=\cO_k$. If $K$ is the CM-field
 of 
$\jac{C}$, then its discriminant $\Delta_K$ is bounded from above by
 constant depending only on $F$ by Theorem \ref{thm:discbound}. 
By the Theorem of Hermite-Minkowski there are only finitely many
 possibilities for $K$. As there are only finitely many abelian surfaces over
 $\IQbar$ with CM by the maximal order of $K$, this leaves at most
 finitely many possibilities for $\jac{C}$ as an abelian variety.
But each abelian variety, such as  $\jac{C}$, carries only finitely many
principal polarizations up-to equivalence; this follows from the general Narasimhan-Nori
Theorem, or from more elementary considerations as $\jac{C}$ is
simple, 
or in a direct way using the arguments in Section \ref{sec:abvarcm}.
Thus up-to $\IQbar$-isomorphism there are only finitely many possibilities for $\jac{C}$ as a
principally polarised abelian variety. 
By Torelli's Theorem this leaves only finitely many
$\IQbar$-isomorphism classes for the curve $C$.
\end{proof}

\begin{appendix}
\section{Numerical Examples}
In this section we provide some numerical examples for our expression
of the Faltings height in Theorem \ref{hyperelliptic}. We will
approximate
$|\chi_{10}(Z_\nu)|\det\imag{Z_v}^5$ numerically and compare
the resulting sum with the conclusion of Colmez's Conjecture,
Proposition \ref{Colmez lower bound}(iii).

Let $K$ be a  CM-field that is  a quartic, cyclic extension of $\IQ$ and
has maximal totally real subfield $F$. Let $A$ be an abelian surface
defined over a number field whose endomorphism ring is $\O{K}$. 

First we describe how to compute $L'(0)/L(0)$ where $L$ is as in
 Proposition
\ref{Colmez lower bound}. For this, let $f_K \ge 1$ be the finite
part of the conductor of $K/\IQ$. In other words, $f_K$ is the least
positive integer such that $K$ is a subfield of the cyclotomic field
generated by a  root of unity of order $f_K$. 
Recall that $\Delta_K > 0$, as $K/\IQ$ is a CM-field of degree $4$,
and $\Delta_F > 0$, since $F/\IQ$ is real quadratic. By Proposition
11.9 and 11.10 in Chapter VII \cite{Neukirch} we have
\begin{equation}
\label{eq:deltacond}
  \Delta_K = f_K^2 \Delta_F. 
\end{equation}

The $L$-function $L(s)=\zeta_K(s)/\zeta_F(s)$ is a product
$L(s,\chi)L(s,\overline\chi)$ of Dirichlet $L$-functions
for some character $\chi : (\IZ/f_K\IZ)^\times \rightarrow\IC$ of
order $4$. If $(\IZ/f_K\IZ)^\times$ is cyclic, \textit{e.g.} if $f_K$ is a
prime, then $\chi$ is uniquely determined up-to
complex conjugation. 

We use (\ref{eq:deltacond}) and Proposition \ref{Colmez lower
  bound}(iii) to compute
\begin{equation*}
  h(A) = - \frac 12
  \log f_K - \realS \frac{L'(0,\chi)}{L(0,\chi)}.
\end{equation*}

Observe that $\chi$ is an odd character.  Corollary 10.3.2 and
Proposition 10.3.5(1)
\cite{Cohen:II} allow us to compute $L(0,\chi)$ and $L'(0,\chi)$,
respectively. We find
\begin{equation}
\label{eq:hAalt}
  h(A) = \frac 12 \log f_K + f_K  \realS
  \left(\frac{\displaystyle{\sum_{m=1}^{f_K-1}}\chi(m)
    \log\Gamma\left(\frac{m}{f_K}\right)}
{\displaystyle{\sum_{m=1}^{f_K-1}}\chi(m)m}\right)
\end{equation}
where $\Gamma(\cdot)$ is the  gamma function. 

To compute the Igusa invariants $J_2,J_4,J_6,J_8,J_{10}$ of a
hyperelliptic equation
we use   Rodriguez-Villegas's  {\tt pari/gp} package based on work
of Mestre and Liu. We used the same software to determine the places
of potentially good reduction for the curves listed below.

We consider three curves  of genus $2$ defined over over the
rationals.
The first quite obviously has a  jacobian variety with CM. Van Wamelen
 \cite{vanWamelen,vanWamelen2}  verified this in the remaining two cases.
The source of the CM-fields $K$ in the second and third example
is van Wamelen's table \cite{vanWamelen}.
For examples 2 and 3 van Wamelen does not prove that the endomorphism ring is
the full ring of integers of $K$. But equality is compatible with our
computations below. 
We use the symbol $\dot{=}$ to denote conditional equality, subject 
to the hypothesis that the endomorphism ring of the jacobian under
consideration is indeed the full ring of integers of the CM field. 
 In all three cases, $K$ has
trivial class group. 

\bigskip
\noindent {\bf Example 1.}
We consider the curve $C$ defined by
\begin{equation*}
  y^2   = x^5 - 1.
\end{equation*}
Let $\zeta=e^{2\pi i /5}$ be a primitive $5$th root of unity. Then $(x,y)\mapsto
(\zeta x,y)$ is a automorphism of $C$ of order $5$ defined over the
cyclotomic field  $K=\IQ(\zeta)$. 
So the endomorphism ring of $\jac{C}$ over the algebraic closure
contains $\IZ[\zeta] = \O{K}$. 
The two must be equal. 
 Observe that
$F=\IQ(\sqrt{5})$ is the maximal totally real subfield of $K$ and $f_K = 5$. As a character $\chi$ near
(\ref{eq:hAalt}) we take for example
$\chi(1)=1,\chi(2)=i,\chi(3)=-i,\chi(4)=-1$. So
\begin{equation}
\label{eq:hJC1}
  h(\jac{C}) = \frac 12 \log 5 
+ \frac 12 \log\left(\Gamma\left(\frac 15\right)^{-3}
\Gamma\left(\frac 25\right)^{-1}
\Gamma\left(\frac 35\right)
\Gamma\left(\frac 45\right)^{3}\right)=
-1.4525092396456\ldots.
\end{equation}
 Bost, Mestre, Moret-Bailly \cite{BMM90} 
computed this Faltings height using a different approach
 to be
 \begin{equation*}
h(\jac{C})=2\log2\pi-\frac{1}{2}\log\left(\Gamma\Big(\frac{1}{5}\Big)^{5}\Gamma\Big(\frac{2}{5}\Big)^{3}\Gamma\Big(\frac{3}{5}\Big)\Gamma\Big(\frac{4}{5}\Big)^{-1}\right).
 \end{equation*}
This expression equals (\ref{eq:hJC1})
by classical properties of the gamma function.

The Igusa invariants of $C$ are
\begin{equation*}
(J_2,J_4,J_6,J_8,J_{10})=(0,0,0,0,2^{-12} \cdot 5^4). 
\end{equation*}
So there is no contribution to the finite places in Theorem \ref{hyperelliptic}.
In fact,  $C$ has potentially good
reduction everywhere. 
This was already observed by Bost, Mestre, and Moret-Bailly.

The different ideal  $\diff{F/\IQ}$ equals $\sqrt{5}\O{F}$. 
If $\omega_1 = 1$ and $\omega_2 = \sqrt{5}\zeta$, then 
\begin{equation*}
  \omega_1\O{F} + \omega_2 \diff{F/\IQ}^{-1} = 
\O{F}+\zeta\O{F} = \O{K}.
\end{equation*}
The period matrix
of $\O{K}$ can be computed using Remark \ref{rem:periodmatg2}
with $\theta = (5+\sqrt 5)/2$,
\begin{equation*}
  \tau_1 = \sqrt{5}\zeta,\quad\text{and}\quad
  \tau_2 = -\sqrt{5}\zeta^3
\end{equation*}
as
\begin{equation*}
  Z =  \left(
  \begin{array}{cc}
    \sqrt{5}^{-1}(\zeta - \zeta^3) & -1- \zeta\frac{1+\sqrt{5}}{2}
    \\
-1- \zeta\frac{1+\sqrt{5}}{2} & 2\sqrt{5}\zeta + \frac{5+\sqrt{5}}{2}
  \end{array}
\right).
\end{equation*}
We observe
$\det\imag{Z} = {\sqrt{5}}/4$ and use a computer to approximate
\begin{equation*}
-\frac{1}{10}\log(|\chi_{10}(Z)|\det\imag{Z}^{5})  = 0.246738390651711\ldots.
\end{equation*}
We add $-\log(2^{4/5}\pi)$ in accordance with Theorem 
\ref{hyperelliptic} and find that the sum approximates
(\ref{eq:hJC1}) up-to the displayed digits.


\bigskip
 \noindent{\bf Example 2.}
The second example concerns the new curve $C$
\begin{equation*}
y^2 = -103615x^6 - 41271x^5 + 17574x^4 + 197944x^3 + 67608x^2 - 103680x - 40824.
\end{equation*}
The endomorphism ring of the jacobian $\jac{C}$ has complex
multiplication by the ring of algebraic integers in 
$K= \IQ(\sqrt{-61+6\sqrt{61}})$. The real quadratic subfield of $K$ is
$F= \IQ(\sqrt{61})$. We have $\Delta_K=61^3$ and $\Delta_F= 61$, so
the conductor of $K$ is $f_K = 61$. 
Now $\diff{F/\IQ} = \sqrt{61}\O{F}$. 
Let $\chi :(\IZ/61\IZ)^\times \rightarrow\IC^\times$
 be the character of order $4$ with $\chi(2) = i$, observe that $2$
 generates
$(\IZ/61\IZ)^\times$. 
Then
 \begin{equation*}
 \sum_{m=1}^{60}\chi(m)m = -61(1 - i) 
 \end{equation*}
and so
\begin{equation}
\label{eq:hCex2}
h(\jac{C}) \dot{=} \frac 12 \log 61 - \frac 12 
\sum_{m=1}^{60} \real{\chi(m)(1+i)}\log\Gamma\left(\frac{m}{61}\right)
= 0.2688651723313\ldots
\end{equation}
by (\ref{eq:hAalt}). 

The Igusa invariants satisfy
\begin{equation*}
\frac{J_8^5}{J_{10}^4}=  -2^{40}\cdot 3^{-91}\cdot
5^{-48}\cdot 41^{-48}\cdot 643^5 \cdot 1871^5
\cdot 19780292330676250264630993^5, 
\end{equation*}
\begin{equation}
\label{ex2:char3}
\frac{J_6^5}{J_{10}^3}= 2^{25}\cdot 3^{-72}\cdot 5^{-36}
\cdot 7^5\cdot 41^{-36} \cdot 487^5\cdot 3449^5
\cdot 3467^5 \cdot 42488533591199^5,
\end{equation}
and
\begin{equation}\label{ex2:chargreater3}
\frac{J_2^5}{J_{10}}=- 2^{25}\cdot 3^{-19}\cdot 5^{-12}
\cdot 7^{15}\cdot 41^{-12}\cdot 39079^5. 
\end{equation}
The quotient (\ref{ex2:char3}) yields the contribution of $3$ to
the Faltings height and (\ref{ex2:chargreater3}) the contribution of
$5$ and $41$. Explicitly, the
finite contribution to $h(\jac{C})$ as in Theorem \ref{hyperelliptic}
is 
\begin{equation}
\label{eq:finitepartex2}
\frac 25 \log 3 + \frac 15 \log 5 + \frac 15 \log 41. 
\end{equation}
Our curve has potentially good reduction away from $3,5,$ and $41$. 

We fix roots $\tau_1,\tau_2\in\IH$ of $x^4 - 61x^3 + 6039x^2 - 137677x
+ 889319$. They are suitable diagonal elements as in Remark \ref{rem:periodmatg2} can be used to
construct a period matrix $Z$ with $\theta = (61+\sqrt{61})/2$. We approximate
\begin{equation*}
-\frac{1}{10}\log(|\chi_{10}(Z)|\det\imag{Z}^{5})  = 0.464065891333779\ldots.
\end{equation*}
We add (\ref{eq:finitepartex2}) and $-\log(2^{4/5}\pi)$ from
Theorem \ref{hyperelliptic} to this value and see that the resulting value
approximates
(\ref{eq:hCex2}) well.

\bigskip
 \noindent{\bf Example 3.} Our final example has bad reduction
 above $2$. Let $C$ be given by
 \begin{equation*}
 y^2 = -x^5 + 3x^4 + 2x^3 - 6x^2 - 3x + 1.
 \end{equation*}
The endomorphism ring of  $\jac{C}$ is the ring of integers in $K
 = \IQ(\sqrt{-2+\sqrt{2}})$ which contains $F=\IQ(\sqrt 2)$. 
We have
 $\Delta_K = 2^{11}$ and $\Delta_F=2^3,$
 as well as $f_K = 2^4$. 
We must take slightly more care when finding $\chi$ as
$(\IZ/16\IZ)^\times\cong \IZ/4\IZ\times \IZ/2\IZ$ is not cyclic and
 admits $4$ characters of order $4$. 
The kernel of $\chi$ we are interested in corresponds
to the fixed field of $K$ in the number field generated by a root of
 unity of order $16$. 
The non-trivial element in $\ker\chi \subseteq (\IZ/16\IZ)^\times$ is represented either by
 $7,9,$ or $-1$. However, $a^2\equiv 1\text{ or }9\mod 16$ if $a$ is
 odd. This rules our $9$ as a representative because $K/\IQ$ is cyclic
 of order $4$. Moreover, $-1$ is also impossible because it represents
 complex conjugation in the Galois group. This leaves  $7$,
\textit{i.e.} $\chi(7)=1$.
We must have $\chi(15)=-1$ and $\chi(9) = \chi(7\cdot 15)=-1$. 
 Again
 up-to complex conjugation there are at most $2$ choices for $\chi$. 
As $\chi(3) = \chi(3\cdot 7)=\chi(5)$  one choice is
 \begin{equation*}
 \begin{array}{c|cccccccc}
m & 1 & 3 & 5 & 7 & 9 & 11 & 13 & 15\\
\hline
\chi(m) & 1 & i  & i & 1 & -1  & -i  & -i  & -1 \\
\end{array}
 \end{equation*}
Thus
\begin{equation*}
\sum_{m=1}^{15} \chi(m)m = -16 (1+i)
\end{equation*}
and so 
\begin{equation*} 
h(\jac{C})\dot{=}\log 4 + \frac 12 \log\left(\frac{\Gamma\left(\frac{9}{16}\right)
\Gamma\left(\frac{11}{16}\right)
\Gamma\left(\frac{13}{16}\right)
\Gamma\left(\frac{15}{16}\right)}{\Gamma\left(\frac{1}{16}\right)
\Gamma\left(\frac{3}{16}\right)
\Gamma\left(\frac{5}{16}\right)
\Gamma\left(\frac{7}{16}\right)}
\right)
\end{equation*}
by (\ref{eq:hAalt}). 
Numerically, we find
\begin{equation}
\label{eq:hex3}
h(\jac{C})\dot{=}  -1.2016102497487\ldots.
\end{equation}

The Igusa invariants satisfy
\begin{equation*}
\frac{J_8^5}{J_{10}^4} = -2^{-24}\cdot 3^{10}\cdot
2029^5,\quad
\frac{J_6^5}{J_{10}^3} = 2^{-8} \cdot 3^5 \cdot 47^5,
\quad\text{and}\quad
\frac{J_2^5}{J_{10}} = 2^4\cdot 3^{15}.
\end{equation*}
So only $2$ contributes to the finite part of the height
in Theorem \ref{hyperelliptic}. In fact, $C$ has potentially good
reduction outside of $2$. The contribution to the finite part is 
\begin{equation*}
\frac{1}{10}\log 2. 
\end{equation*}
We can take 
\begin{equation*}
\tau_1 = 2\sqrt{-2+\sqrt 2}\sqrt{2}\quad\text{and}\quad
\tau_2 = 2\sqrt{-2-\sqrt 2}\sqrt{2}
\end{equation*}
to construct $Z$, now with $\theta = (2+\sqrt{2})/2$ and find
\begin{equation*}
-\frac{1}{10}\log(|\chi_{10}(Z)|\det\imag{Z}^{5})  = 0.428322662492607\ldots.
\end{equation*}
We must add
$(\log 2)/10$ to this value to compensate for bad reduction and
 $-\log(2^{4/5}\pi)$ due to the normalisation of the archimedean places. We end up with a good match with (\ref{eq:hex3}).
 

\end{appendix}

\bibliographystyle{amsplain}
\bibliography{literature}

\end{document}